\documentclass[11pt,twoside,english]{article}
\usepackage{amsmath}
\usepackage{amsfonts}
\usepackage{mathrsfs}
\usepackage{color}
\usepackage{ amsmath, amsfonts, amssymb, amsthm, amscd}
%%%%%%%%%%%%%%%%%%%%%%%%%%%%%5
\usepackage[T1]{fontenc}
\usepackage[english]{babel}
\usepackage[noinfoline]{imsart}

%%%%%%%%%%%%%%%%%%%%%%%%%%%%%%%%%%%%%%%%%%%%%%%%%%%%%%%%%%%%%%%%%%%%%%%%%%%%%%
%%%%%%%%%%       Format

%%%%%%%%%%%%%%%%%%%%%%%%%%%%%%%%%%%%%%%%%%%%%%%%%%%%%%%%%%%%%%%%%%%%%%%%%%%%%%

\setlength{\oddsidemargin}{5mm} \setlength{\evensidemargin}{5mm}
\setlength{\textwidth}{150mm} \setlength{\headheight}{0mm}
\setlength{\headsep}{12mm} \setlength{\topmargin}{0mm}
\setlength{\textheight}{220mm} \setcounter{secnumdepth}{2}

%%%%%%%%%%%%%%%%%%%%%%%%%%%%%%%%%%%%%%%%%%%%%%%%%%%%%%%%%%%%%%%%%%%%%%%
\newtheorem{theorem}{Theorem}
\newtheorem{lemma}{Lemma}
\newtheorem{proposition}{Proposition}

\newtheorem{definition}{Definition}
\newtheorem{corollary}{Corollary}

%%%%%%%%%%%%%%%%%%%%%%%%%%%%%%%%%%%%%%%%%%%%%%%%%%%%%%%%%%%%%%%%%%%%%%%%%%%%%%%%%%%?
%%%%%%%%%%%%%%%%%%%%%%%%%%%%%%%%%%%%%%%%%%%%%%%%%%%%%%%%%%%%%%%%%%%%%%%%%%%%%%
%%%%%%%%%% Calligraphic letters
%%%%%%%%%%%%%%%%%%%%%%%%%%%%%%%%%%%%%%%%%%%%%%%%%%%%%%%%%%%%%%%%%%%%%%%%%%%%%%

\newcommand{\cH}{\ensuremath{\mathcal H}}

\newcommand{\cO}{\ensuremath{\mathcal O}}

%%%%%%%%%%%%%%%%%%%%%%%%%%%%%%%%%%%%%%%%%%%%%%%%%%%%%%%%%%%%%%%%%%%%%%%%%%%%%%
%%%%%%%%%%%% Blackboard bolds
%%%%%%%%%%%%%%%%%%%%%%%%%%%%%%%%%%%%%%%%%%%%%%%%%%%%%%%%%%%%%%%%%%%%%%%%%%%%%%

\newcommand{\bbR}{{\ensuremath{\mathbb R}} }

%%%%%%%%%%%%%%%%%%%%%%%%%%%%%%%%%%%
%%%       Other commands
%%%%%%%%%%%%%%%%%%%%%%%%%%%%%%%%%%%

%\newcommand{\cal}{\mathcal}

\newcommand{\be}{\begin{equation}}
\newcommand{\ee}{\end{equation}}
\newcommand{\beq}{\begin{eqnarray}}
\newcommand{\eeq}{\end{eqnarray}}
%\newcommand{\qed}{\hspace*{\fill} $\Box$ \spazio}   %already exists

%%%%%%%%%%%%%%%%%%%%%%%%%%%%%%%%%%%%%%%%%%%%%%%%%%%%%%%%%%%%%%%%%%%%%%%%%%%%%%%%%%%%
%%%%%%%%%%%%%%%%%%%%%%%%%%%%%%%%%%%%%%%%%%%%%%%%%%%%%%%%%%%%%%%%%

%\newcommand{\La}{\Lambda}

\newcommand{\R}{\mathbb{R}}

  % ensemble des nombres relatifs
  % ensemble des nombres complexes

\newcommand{\ced}{\end{proof}}

\newcommand{\p}{^{\prime}}

\setlength{\parindent}{0cm}

\begin{document}

\begin{frontmatter}
\title{The Obstacle Problem for  Quasilinear Stochastic Integral-Partial Differential Equations}
\date{}
\runtitle{}
\author{\fnms{Yuchao}
	\snm{DONG}\corref{}\ead[label=e1]{ycdong@fudan.edu.cn}}
%\thankstext{T1}{The first auther gratefully acknowledges finincial support from R\'egion Pays de la Loire throught the grant PANORisk }
\address{Fudan University and University of Angers
	\\\printead{e1}}
\author{\fnms{Xue}
	\snm{YANG}\corref{}\ead[label=e2]{xyang2013@tju.edu.cn}}
%\thankstext{T1}{The work of the second author is supported by National Natural Science Foundation of China (11401427) }
\address{Tianjin University
	\\\printead{e2}}
\author{\fnms{Jing}
	\snm{ZHANG}\corref{}\ead[label=e3]{zhang\_jing@fudan.edu.cn}}
%\thankstext{T2}{The work of the third author is supported by National Natural Science Foundation of China (11401108) and Shanghai Science and Technology Commission Grant (14PJ1401500).}
\address{Fudan University
	\\\printead{e3}}

\runauthor{Y. Dong, X. Yang and J. Zhang}

\begin{abstract}
We prove an existence and uniqueness result for the obstacle problem for quasilinear stochastic integral-partial differential equations. Our method is based on the probabilistic interpretation of the solution using backward doubly SDEs with jumps. 
\end{abstract}

\begin{keyword}
	\kwd{stochastic partial differential equations, integral-partial differential operators, obstacle problem, backward doubly stochastic differential equations with jumps, regular potential, regular measure}
\end{keyword}
\begin{keyword}[class=AMS]
	\kwd[Primary ]{60H15; 60G46; 35R60.}
\end{keyword}

\end{frontmatter}

\section{Introduction}

We consider the following stochastic partial differential equations with obstacles (OSPDEs for short)
\small
\begin{equation}\label{SPDE1} 
\left\{
\begin{split}
&du_t (x) + [ \frac{1}{2} \mathcal A u_t (x)  + f_t(x,u_t (x),\nabla u_t (x))]\, dt+  h_t(x,u_t(x),\nabla u_t(x))\cdot \overleftarrow{dB}_t  = 0, \, (t,x)\in [0,T]\times R^d;\\
 & u_t(x)\geq v_t(x),\quad  (t,x)\in [0,T]\times R^d;\\
 & u_T(x) = \Phi(x), \quad x\in R^d.
\end{split}\right.
\end{equation}
The operator $\mathcal A$, which is non-local,  is an symmetric infinitesimal generator of a Markov process with jumps.   $f$ and $h = \big(h_1, \cdots,h_{d^1}\big)$ are non-linear random functions. The differential term with $\overleftarrow{dB}_t$
refers to the backward stochastic integral with respect to a $d^1$-dimensional Brownian motion on a probability space
$\big(\Omega, \mathcal{F},\mathbb{P} \big)$, so that the doubly stochastic framework introduced by Pardoux and Peng \cite{PardouxPeng94} could be applied. Given an obstacle $v: \Omega\times[0,T]\times \cO \rightarrow \R$, we study the OSPDE \eqref{SPDE1},
i.e. we want to find a solution that satisfies "$u\geq v$" where the obstacle $v$ is regular in some sense.

Nualart and Pardoux \cite{NualartPardoux} have studied the obstacle problem for a nonlinear heat equation on the spatial interval
$[0,1]$ driven by a space-time white noise with the diffusion matrix $a=I$. 
Then  Donati-Martin and Pardox \cite{Donati-Pardoux} proved it for the general diffusion matrix.  
Various properties of  the solutions were studied later in \cite{DMZ}, \cite{XuZhang}, \cite{Zhang} etc.. 
Denis, Matoussi and Zhang (\cite{DMZ12})  study the OSPDE  with null Dirichlet condition on an open domain in $\mathbb{R}^d$. 
Their method is based on the techniques of parabolic potential theory developed by M. Pierre (\cite{Pierre}, \cite{PIERRE}). 
The solution is expressed as a pair $(u,\nu)$  where $u$ is a predictable
continuous process which takes values in a proper Sobolev space and
$\nu$ is a random regular measure satisfying  some minimal 
condition. The key point was to construct a  solution $u$ which admits a quasi-continuous version defined outside a polar set and the regular measures $\nu$ which in general are not absolutely continuous w.r.t. the Lebesgue measure. 
\vspace{2mm}

As backward stochastic differential equations (BSDEs for short) was rapidly developed, obstacle problem associated with a non-linear partial differential equation 
(PDE for short) with more general coefficients, and the properties of the solutions for OSPDEs were studied in the framework of BSDEs (\cite{BCEF}, \cite{Elk2}, etc.).
Matoussi and Stoica  \cite{MS10} have proved an existence and
uniqueness result for the obstacle problem of backward quasilinear
stochastic PDE on the whole space $\bbR^d$ and driven by a finite dimensional Brownian motion. The method is based on the probabilistic
interpretation of the solution by using the backward doubly stochastic differential equation (BDSDE in short).  The solution also is expressed as a pair $( u,\nu)$.
%where  $u$ is a predictable continuous process which takes values in a proper  Sobolev space 
%and $\nu$ is a random regular measure satisfying minimal Skohorod condition. 
It is essential to give the regular measure $\nu$  a probabilistic interpretation in term of the continuous increasing  process $K$ where $ (Y,Z,K)$ is the solution of a  reflected generalized BDSDE.
\vspace{2mm}

The stochastic partial differential equations in \eqref{SPDE1} without the obstacle was studied in \cite{Denis2}, which is the main motivation of this article.  It inspires us to generalize the obstacle problem in \cite{DMZ12}  and consider a more general  linear  integral-differential operator.  Reflected BSDE with jumps, which is a standard reflected BSDE driven by a Brownian motion and an independent Poisson point process, has been studied by Hamad\`ene and Ouknine in \cite{HO}.  After that, parabolic integro-differential partial equations with two obstacles are solved by Harraj, Ouknine and Turpin in \cite{HOT}. But in the framework of BSDEs, they concern on viscosity solutions for obstacle problems.
\vspace{2mm}

Our aim is to study the OSPDE \eqref{SPDE1} with a non-negative, self-adjoint operator associated with a symmetric L\'evy process by using the probabilistic method in \cite{MS10} and prove the existence and uniqueness of the weak solution. The quasi-continuity of $u$ makes it possible for us to find the regular measure $\nu$ satisfying $\nu(u>v)=0$.  In this paper,  the difficulty mainly lies to the estimate on the jump part of the l\'evy process  during the approximation so that the uniform convergence of the penalization sequence on the trajectories can be obtained.  But in the present work, the model does not contain the term of divergence as in \cite{MS10},  since the   L\'evy process is considered here,  and we will leave this problem into the future work.
\vspace{2mm}

The remainder of this paper is organized as follows: in the second section, we set the function spaces,  probability space and  introduce the notion of regular measures associated to parabolic potentials. The quasi-continuity of the solution for SPDE without obstacle is also proved in this section. The third section is devoted to proving the existence and uniqueness result.

\section{Preliminaries}
\label{spaces}
Let $L^2(\mathbb{{R}}^d) $ be the set of square integrable functions with respect to Lebesgue measure on $\mathbb{R}^d$. It is a Hilbert space equipped with the usual  scalar product and norm as follows, 
$$(u,v) =\int_{\mathbb{R}^d}u(x)v(x)dx,\quad \|u\| _2=\left(\int_{\mathbb{R}^d}u^2(x) dx\right)^{\frac 12}. $$ 
We also use the notation
\[ (u,v)=\int_{\mathbb{R}^d} u(x)v(x)\, dx,\]
where $u$, $v$ are measurable functions defined in $\mathbb{R }^d$ and $uv \in L^1 (\mathbb{R}^d )$. We will consider an evolution problem over a fixed time interval $[0,T]$ and define the norm for a function in $L^2( [0,T] \times \mathbb{{R}}^d)$ as 
$$\left\| u\right\| _{2,2}=\left(\int_0^T  \int_{\mathbb{R}^d} |u (t,x)|^2 dx dt \right)^{\frac 12}. $$

Another Hilbert space that we use is the first order Sobolev space $H^1(\mathbb{R }^d)$. Its natural scalar product and norm are
$$\left( u,v\right) _{H^1\left( {\mathbb{R}^d}\right) }=\left( u,v\right) +\left( \nabla u, \nabla v \right),\quad \left\| u\right\| _{H^1\left(
{\mathbb{R}^d}\right) }=\left( \left\| u\right\| _2^2+\left\| \nabla u\right\| _2^2\right) ^{\frac 12},$$
where $\nabla u (t,x) = (\partial_1 u (t,x), \cdots, \partial_d u (t,x))$ denotes the gradient.

Of special interest is the subspace $\widetilde{F}  \subset L^2( [0,T]; H^1( {\mathbb{R}^d})) $ consisting of all functions $u(t,x)$ such that 
$t \mapsto u_t = u(t, \cdot)$ is continuous in $L^2 (\mathbb{R}^d)$. The natural norm on $\widetilde{F}$ is
$$\left\| u\right\| _{T}= \sup_{ 0 \leq t \leq T}\left\| u_t\right\|_2 + \left(\int_0^T  \|\nabla u_t \|_2 dt \right)^{\frac 12}.$$

The space of test functions is denoted by
 $ \mathcal{D}_T  = \mathcal{C}^{\infty} \big([0,T]\big) \otimes \mathcal{C}_c^{\infty} \big(\mathbb{R}^d\big)$, where
$\mathcal{C}^{\infty} \big([0,T]\big)$ denotes the space of real functions which can be extended as infinite differentiable functions in the neighborhood of $[0,T]$ 
and $ \mathcal{C}_c^{\infty}\big(\mathbb{R}^d\big)$ is the space of infinite differentiable functions with compact support in $\mathbb{R}^d$.
%The Lebesgue measure in $\mathbb{R}^d$ will be sometimes denoted by $m$.

%%%%%%%%%%%%%%%%%%%%%%%%%%%%%%%%%%%%%%%%%%%%%%%%%%%%%%%%%%%%%%%%%%%%%%%%%%%%
\subsection {The corresponding Markov process}

We consider a Dirichlet form $(\mathcal E, H^1(\mathbb{ R}^d))$ on $L^2(\mathbb{ R}^d)$ as follows: 
\[\mathcal E(u,v)=\mathcal {E}^1(u,v)+\mathcal {E}^2(u,v)\]
with
\[\mathcal {E}^1(u,v)=\frac{1}{2}(\nabla u,\nabla v)\quad \mbox{and}\quad\mathcal {E}^2(u,v)=\frac{1}{2}\int_{\mathbb{ R}^d\times\mathbb{ R}^d\setminus \Gamma}\frac{(u(x)-u(y))(v(x)-v(y))}{|x-y|^{d+\alpha}}dxdy,\]
where $\Gamma$ is the diagonal $\Gamma:= \{(x,x)|x\in \mathbb{R}^d\}$ and  $\alpha \in (0,2)$.

From classical probability theory, we know that such a Dirichlet form is related to a Hunt process, whose infinitesimal generator $\mathcal A$ is 
\[\mathcal A u(x)=\frac{1}{2}\Delta u(x) +\int_{\mathbb{ R}^d}\big(u(x+y)-u(x)\big )\frac{dy}{|y|^{d+\alpha}}.\]
Let $\Omega^1 = \mathcal{C }\left([0, \infty ); \mathbb{R}^d \right)$ be the space of continuous trajectories. The canonical process $(W_t)_{t \geq 0}$ is defined by $ W_t (\omega^1) = \omega^1 (t)$, for any $ \omega^1 \in \Omega^1$, $t \geq 0$ and  the shift operator, $ \theta_t  \, : \,  \Omega^1 \longrightarrow  \Omega^1$, is defined by $ \theta_t (\omega^1) (s) = \omega^1 (t+s)$, for any $s,\,t\geq 0$. The canonical filtration $ \mathcal{F}_t^1 = \sigma \left( W_s; s \leq t \right)$ is completed by the standard procedure with respect to the probability measures produced by the transition function
$$Q_t (x, dy) = q_t (x-y) dy, \quad t >0, \quad x \in \mathbb{R}^d,$$
where $ q_t (x) = \left(2\pi t\right)^{- \frac{d}{2}} \exp \left( - |x|^2/2t \right)$ is the Gaussian density. Thus, we get a continuous Hunt process $\left(\Omega^1, W_t, \theta^1_t, \mathcal{F}^1, \mathcal{F}^1_t, \mathbb{P}_1^x \right)$. 
%We shall also use the backward filtration of the future events $ \mathcal{F}'_t = \sigma \left(W_s; \; \,  s \geq t \right)$ for $t\geq 0$. 
$\mathbb{P}_1^0$ is the Wiener measure, which is supported by the set $ \Omega^1_0 = \{ \omega^1 \in \Omega^1, \; \,  w^1 (0) =0 \}$. We set $ \Pi_0 (\omega^1) (t) = \omega^1 (t) - \omega^1 (0),\,  t \geq 0$, which defines a map $ \Pi_0 \, : \, \Omega^1 \rightarrow \Omega^1_0$. Then  $\Pi = (W_0, \Pi_0 ) \, : \,  \Omega^1 \rightarrow \mathbb{R}^d \times \Omega^1_0$ is a bijection. For each probability measure on $\mathbb{R}^d$, the probability $\mathbb{P}^{\mu}$ of the Brownian motion started with the initial distribution $\mu$ is given by $$ \mathbb{P}_1^{\mu} = \Pi^{-1} \left(\mu \otimes \mathbb{P}_1^0 \right).$$
In particular, for the Lebesgue measure in $\mathbb{R}^d$, which we denote by $ m = dx$, we have
$$ \mathbb{P}_1^{m} = \Pi^{-1} \left(dx\otimes \mathbb{P}_1^0 \right).$$
%%These relations are saying that $W_0$ is independent of $ \Pi_0$. It is known that each component $(W^i_t)_{t \geq 0}$ of the Brownian motion, $ i =1,\cdots,d$, is a martingale under any of the measures $\mathbb{P}_1^{\mu}$. The next lemma shows that
%%$ \left(W_{t-r}^i, \mathcal{F}'_{t-r}\right)$, $r \in (0, t]$ is  a backward local martingale under $\mathbb{P}_1^{m}$.
%%\begin{lemma}(Lemma 1 in \cite{MS10}) Let $ 0< s < t$. If $A \in \sigma (W_t)$ is such that $ \mathbb{E}_1^m \left[|W_t|; A \right] < \infty$, then one has   $ \mathbb{E}^m \left[|W_s|; A \right] < \infty$. Moreover, for each $ B \in \mathcal{F}'_{t}$, and $ i = 1, \cdots, d$, one has $$
%%  \mathbb{E}_1^m \left[W^i_s; A \cap B  \right] = \mathbb{E}_1^m \left[W^i_t; A \cap B \right].$$
%%\end{lemma}
 
Denote $\Omega^2 :=D([0,T];\mathbb{ R}^d)$ as the Skorohod space. The canonical processs $V_t$ and the shift operator $\theta^2_t$ can be defined similarly to $W_t$ and $\theta^1_t$ given above. Hence, we get a Hunt process $(\Omega^2,V_t,\theta^2_t, \mathcal {F}^2,\mathcal {F}_t^2,\mathbb{ P}_2^x)$ related to the Dirichlet form $\mathcal {E}^2$. 

We consider the sample space $\Omega'=\Omega^1 \times \Omega^2$ and the process $(X_t)_{t\geq0}$ defined by $X_t(\omega^1,\omega^2)=W_t(\omega^1)+V_t(\omega^2)$ for $t\geq0$. The shift operator $\Theta_t:\Omega' \longleftarrow \Omega'$ is defined by $\Theta_t(w^1,w^2)(s)=(w^1(t+s),w^2(t+s))$, for any $s,\,t\geq 0$. The $\sigma$-field $\mathcal F$ and filtration $\mathcal {F}_t$ are given by $\mathcal F:=\mathcal {F}^1 \times \mathcal {F}^2$ and $\mathcal {F}_t=\mathcal {F}^1_t \times \mathcal {F}^2_t$. The family of probability measures $\{\mathbb{ P}^x\}_x$ is defined by $\mathbb{ P}^x:=\mathbb{ P}_1^x \times \mathbb{ P}_2^0$. Therefore, we see that $(\Omega',X_t,\Theta_t,\mathcal F,\mathcal {F}_t,\mathbb{P}^x)$ is a homogeneous Markov process related to the Dirichlet form $(\mathcal E,H^1(\mathbb{ R}^d))$. For the process $X$, we have the following decomposition: 
\[X_t=W_t+\int_0^t\int_{|z|\ge1}zN(dz,dt)+\int_0^t\int_{|z|<1}z\tilde N(dz,dt),\]
where $N(dz,dt)$ is the jumping measure of $X$ with $v(dz)dt:=\frac{1}{|z|^{d+\alpha}}dzdt$ as its predictable compensator and $\tilde N(dz,dt):=N(dz,dt)-v(dz)dt$ the associated compensated measure. 

Denote by $P_t$ the corresponding semigroup which is strongly continuous on $L^2(\mathbb{ R}^d)$. It is easy to verify that the transition function is absolutely continuous with respect to the Lebseque measure:
\[P_t(x,dy)=p_t(y-x)dy,\]
where $p_t(x)$ is the density.  It is easy to see that 
\begin{equation*}
\begin{split}
\int_{\mathbb R^d}P_tf(x)dx&=\int_{\mathbb R^d}\int_{\mathbb R^d}f(y)P_t(x,dy)dx=\int_{\mathbb R^d}\int_{\mathbb R^d}f(y)p_t(x-y)dydx\\
&=\int_{\mathbb R^d}\int_{\mathbb R^d}f(x+z)p_t(z)dzdx=\int_{\mathbb R^d}\int_{\mathbb R^d}f(x+z)p_t(z)dxdz\\
&=\int_{\mathbb R^d}f(x)dx.
\end{split}
\end{equation*}
Thus, the Lebesgue measure is an invariant measure for the semigroup $P_t$.\\

Next, for future purposes, we give some results concerning the deterministic PDE with respect to $\mathcal A$. As the proofs are similar to that in \cite{MS10}, we omit them. 
\begin{lemma}
	\label{coefficientf}
	Let $f \in L^2 \left( [0,T] \times \mathbb{R}^d ; \mathbb{R} \right)$ and denote  by $(u^n)_{n\in \mathbb{N}}$ the sequence of solutions of the equations
	\begin{equation*}
	\big(\partial_t  +   \mathcal {A}  \big) u^{n} - n u^{n} + f   = 0, \quad \forall n \in \mathbb{N},
	\end{equation*}
	with final condition $u_T^n = 0$. Then we have
	\begin{equation}
	\label{deterministe:n}
	\int_0^T \mathcal {E}(u_t^n) dt \leq c\left[ \frac{1}{n} \int_0^T \|f_t\|_2^2 dt + \int_0^T e^{-2n (T-t)}
	\|f_t\|_2^2 dt \, \right].
	\end{equation}
\end{lemma}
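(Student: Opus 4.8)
The plan is to obtain the energy estimate directly from the variational formulation of the penalized equation, testing against $u^n$ itself. Writing the equation $(\partial_t + \mathcal A)u^n - n u^n + f = 0$ weakly and pairing with $u^n_t$, I would use the symmetry of $\mathcal A$ to identify $-(\mathcal A u^n_t, u^n_t) = \mathcal E(u^n_t)$, so that, after integrating the time-derivative term, one gets for each $t$
\begin{equation*}
\tfrac{1}{2}\|u^n_t\|_2^2 + \int_t^T \mathcal E(u^n_s)\,ds + n\int_t^T \|u^n_s\|_2^2\,ds = \int_t^T (f_s, u^n_s)\,ds,
\end{equation*}
using the terminal condition $u^n_T = 0$. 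This is the backbone identity; everything else is bookkeeping on the right-hand side.

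From there the strategy is a Gronwall-type argument that tracks the exponential weight $e^{-2n(T-t)}$. First I would bound $(f_s,u^n_s) \le \tfrac{1}{2n}\|f_s\|_2^2 + \tfrac{n}{2}\|u^n_s\|_2^2$ and absorb the $\tfrac{n}{2}\|u^n_s\|_2^2$ into the $n\int \|u^n_s\|_2^2$ term on the left, which leaves $\tfrac12\|u^n_t\|^2_2 + \int_t^T \mathcal E(u^n_s)ds + \tfrac n2 \int_t^T\|u^n_s\|_2^2 ds \le \tfrac{1}{2n}\int_t^T\|f_s\|_2^2 ds$. Taking $t=0$ already gives the first term $\tfrac{1}{n}\int_0^T\|f_t\|_2^2 dt$ in \eqref{deterministe:n}. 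To see where the second term comes from, I would instead keep the full term $n\int_t^T\|u^n_s\|^2_2 ds$ and treat $\phi(t) := \|u^n_t\|_2^2$: the identity gives $\phi(t) \le 2\int_t^T(f_s,u^n_s)ds - 2n\int_t^T\phi(s)ds$, and after Cauchy–Schwarz on $(f_s,u^n_s) \le \tfrac12\|f_s\|_2\|u^n_s\|_2 \cdot 2$ — more cleanly, $2(f_s,u^n_s) \le \eta\|u^n_s\|_2^2 + \eta^{-1}\|f_s\|_2^2$ with $\eta$ chosen relative to $n$ — one arrives at a differential inequality of the form $\phi'(t) \ge -2n\phi(t) - \|f_t\|_2^2$ (in the integrated sense), whose solution with $\phi(T)=0$ is $\phi(t) \le \int_t^T e^{-2n(s-t)}\|f_s\|_2^2 ds$. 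Feeding this pointwise bound on $\|u^n_s\|_2^2$ back into the term $\int_0^T(f_s,u^n_s)ds$ on the right of the energy identity, splitting $(f_s,u^n_s)$ as before but now using the exponential bound on $\|u^n_s\|_2$, produces exactly the $\int_0^T e^{-2n(T-t)}\|f_t\|_2^2 dt$ contribution after a Young's inequality, while the residual is controlled by the $\tfrac1n\int_0^T\|f_t\|_2^2 dt$ piece. Combining the two bounds and renaming constants gives \eqref{deterministe:n}.

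The main obstacle I anticipate is not the algebra but justifying the formal manipulations rigorously: the nonlocal operator $\mathcal A$ has an unbounded part (the $\alpha$-stable piece), so the pairing $(\mathcal A u^n_t, u^n_t)$ and the integration by parts in time need the correct function-space setting — one works in $\widetilde F$, approximates $f$ by smooth data, and uses that $u^n \in L^2([0,T];H^1)$ with $\partial_t u^n \in L^2([0,T];H^{-1})$ so that the It\^o/Lions formula for $\frac{d}{dt}\|u^n_t\|_2^2$ applies and $t\mapsto\|u^n_t\|_2^2$ is absolutely continuous. Since the excerpt explicitly says the proof is "similar to that in \cite{MS10}," I would structure the writeup to point to that reference for these regularity and approximation details and present only the energy computation above; the one place where genuine care is needed, and where $\mathcal A$ differs from the pure-Laplacian case of \cite{MS10}, is checking that $\mathcal E^2(u^n_t) \ge 0$ and that $\mathcal E = \mathcal E^1 + \mathcal E^2$ still dominates the relevant norm, which is immediate from the definition of $\mathcal E^2$ as a sum of squares.
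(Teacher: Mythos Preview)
Your energy-identity approach is correct and in fact proves more than the lemma asserts. Once you have
\begin{equation*}
\tfrac{1}{2}\|u^n_t\|_2^2 + \int_t^T \mathcal E(u^n_s)\,ds + n\int_t^T \|u^n_s\|_2^2\,ds = \int_t^T (f_s, u^n_s)\,ds
\end{equation*}
and apply $(f_s,u^n_s)\le \tfrac{1}{2n}\|f_s\|_2^2 + \tfrac{n}{2}\|u^n_s\|_2^2$ at $t=0$, absorption already yields
\begin{equation*}
\int_0^T \mathcal E(u^n_s)\,ds \le \tfrac{1}{2n}\int_0^T \|f_s\|_2^2\,ds,
\end{equation*}
which is stronger than \eqref{deterministe:n} since the second bracketed term there is nonnegative. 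So you may stop after your ``first bound''. Everything you write afterwards about recovering the $e^{-2n(T-t)}$ contribution is unnecessary, and the Gronwall-type manipulation you sketch for it is imprecise as written (the claimed differential inequality for $\phi$ and the ``feeding back'' step are not cleanly derived).

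The paper itself omits the proof and refers to \cite{MS10}. The specific two-term form of \eqref{deterministe:n}, with the boundary-layer piece $\int_0^T e^{-2n(T-t)}\|f_t\|_2^2\,dt$, suggests that the argument there proceeds via the explicit semigroup representation $u^n_t=\int_t^T e^{-n(s-t)}P_{s-t}f_s\,ds$ and a direct estimate of $\mathcal E(u^n_t)$ from that formula, which naturally produces such a term near $t=T$. Your variational route bypasses this and delivers the clean $O(1/n)$ bound in a single step; it is a legitimate and arguably tidier alternative. Your remarks on the regularity needed to justify the pairing and the time differentiation are appropriate and match what one would cite from \cite{MS10}.
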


Obviously, \eqref{deterministe:n} implies that $ \displaystyle  \lim_{n\to \infty} \int_0^T \mathcal E(u^n_t) dt = 0$.
We present a strengthened version of this relation in the next corollary. 
\begin{corollary}\label{coefficientfn}
	Let $f$ and $f^n$, for any $n \in \mathbb{ N}$ be in $L^2([0,T] \times\mathbb{ R}^d ; \mathbb{R})$ such that 	$\displaystyle \lim_{n\to \infty} \int_0^T  \| f^n_t - f_t \|_2^2 dt = 0$.  Then,  the solutions  $(u^n)_{n\in \mathbb{N}}$
	of the equations
	\begin{equation*}
	\big(\partial_t  +   \mathcal A\big) u^{n} - n u^{n} + f^n   = 0,
	\end{equation*}
	with final condition $u_T^n = 0$, satisfy the relation $ \displaystyle  \lim_{n\to \infty} \int_0^T\mathcal E(u^n_t) dt = 0$.
\end{corollary}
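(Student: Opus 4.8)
The plan is to reduce everything to Lemma~\ref{coefficientf} by exploiting the linearity of the operator $\partial_t + \mathcal A - n$. First I would introduce, for each $n$, the solution $v^n$ of
\begin{equation*}
\big(\partial_t + \mathcal A\big) v^n - n v^n + f = 0, \qquad v^n_T = 0,
\end{equation*}
that is, the analogue of $u^n$ driven by the fixed limit $f$ rather than by $f^n$. Lemma~\ref{coefficientf} applied to $f$ gives $\lim_{n\to\infty}\int_0^T \mathcal E(v^n_t)\,dt = 0$, exactly as noted immediately after its statement.

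Next I would set $w^n := u^n - v^n$. By linearity of $\partial_t + \mathcal A - n$, the difference $w^n$ solves
\begin{equation*}
\big(\partial_t + \mathcal A\big) w^n - n w^n + (f^n - f) = 0, \qquad w^n_T = 0,
\end{equation*}
so Lemma~\ref{coefficientf}, now applied with source $f^n - f \in L^2([0,T] \times \mathbb R^d)$, yields
\begin{equation*}
\int_0^T \mathcal E(w^n_t)\,dt \le c\left[ \frac1n \int_0^T \|f^n_t - f_t\|_2^2\,dt + \int_0^T e^{-2n(T-t)} \|f^n_t - f_t\|_2^2\,dt \right].
\end{equation*}
Since $\frac1n \le 1$ for $n \ge 1$ and $e^{-2n(T-t)} \le 1$ on $[0,T]$, the right-hand side is bounded by $2c \int_0^T \|f^n_t - f_t\|_2^2\,dt$, which tends to $0$ by hypothesis; hence $\int_0^T \mathcal E(w^n_t)\,dt \to 0$.

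Finally, since $\mathcal E(\cdot) = \mathcal E(\cdot,\cdot)$ is a nonnegative symmetric bilinear form, $u \mapsto \mathcal E(u)^{1/2}$ is a seminorm, and therefore $\mathcal E(u^n_t) = \mathcal E(v^n_t + w^n_t) \le 2\mathcal E(v^n_t) + 2\mathcal E(w^n_t)$. Integrating over $[0,T]$ and letting $n \to \infty$, both terms on the right tend to $0$ by the two preceding steps, which is the assertion. I do not expect a genuine obstacle here: the only point requiring care is that the splitting $u^n = v^n + w^n$ be legitimate in whatever weak-solution class Lemma~\ref{coefficientf} is phrased in, and this is immediate from linearity and uniqueness of the approximating equations, so the sole quantitative ingredient is estimate~\eqref{deterministe:n} itself.
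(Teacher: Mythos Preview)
Your argument is correct: the splitting $u^n = v^n + w^n$ by linearity, together with two applications of estimate~\eqref{deterministe:n} (once with the fixed source $f$ and once with the vanishing source $f^n - f$), cleanly yields the conclusion. The paper itself omits the proof of this corollary, referring instead to \cite{MS10}, so there is no in-paper argument to compare against; your approach is precisely the natural one and is essentially what is carried out in that reference.
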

%%%%%%%%%%%%%%%%%%%%%%%%%%%%%%%%%%%%%%%%%%%%%%%%%%%%%%%%%%%%%%%%%%%%%%%%%%%%%%%%%%%%
\subsection{Regular measures}\label{measure}

In this section, we shall be concerned with some facts related to the time-space Markov process, with the state space $\left[ 0,T\right[ \times \mathbb{R}^{d},$ 
corresponding to the generator $\partial _{t}+ \mathcal A.$ Its associated semigroup will be denoted by $(\widetilde{P}_{t})_{t>0}$. 
We may express it in terms of the density the semigroup $(P_{t})_{t>0}$ in the following way
\begin{equation*}
\widetilde{P}_{t}\psi \left( s,x\right) =\left\{
\begin{array}{cc}
\int_{\mathbb{R}^{d}}p_{t}\left( x,y\right) \psi \left( s+t,y\right) dy, &
if\ \ s+t<T, \\
0, & otherwise,
\end{array}
\right.
\end{equation*}
where $\psi :\left[ 0,T\right[ \times \mathbb{R}^{d}\rightarrow \mathbb{R}$
is a bounded Borel measurable function, $s\in \left[ 0,T\right[ ,x\in
\mathbb{R}^{d}$ and $t>0.$  So we may also write $\big(\widetilde{P}_t \psi\big)_s = P_t \psi_{t+s}$ if $ s+t < T$. The corresponding resolvent has a density
expressed in terms of the density $p_{t}$ too, as follows
\begin{equation*}
\widetilde{U}_{\alpha }\psi \left( t,x\right) =\int_{t}^{T}\int_{\mathbb{R}
^{d}}e^{-\alpha \left( s-t\right) }p_{s-t}\left( x-y\right) \psi \left(
s,y\right) dyds.
\end{equation*}
or
\begin{equation*}
\big(\widetilde{U}_{\alpha }\psi \big)_t =\int_{t}^{T} e^{-\alpha \left( s-t\right) } P_{s-t} \psi_{s} \,  ds.
\end{equation*}
In particular this ensures that the excessive functions with respect to the
time-space Markov process are lower semicontinuous. In fact we will not use
directly the time space process, but only its semigroup and resolvent. For
related facts concerning excessive functions the reader is refered to \cite{BlumenthalGetoor}, \cite{fot} and \cite
{DM}. Some further properties of this semigroup
are presented in the next lemma. The proof of it is almost the same to that of Lemma 2 in \cite{MS10}. Thus we omit the proof.
\begin{lemma}
The semigroup $( \widetilde{P}_{t}) _{t>0}$ acts as a strongly
continuous semigroup of contractions on the spaces 
$L^{2}([ 0,T[ \times \mathbb{R}^{d}) =L^{2}([ 0,T[\,;L^{2}( \mathbb{R}^{d}))$ and $L^{2}([ 0,T[\, ;H^{1}( \mathbb{R}^{d})).$
\end{lemma}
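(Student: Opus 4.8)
The plan is to reduce the whole statement to two facts about the one‑parameter family $(\widetilde P_t)_{t>0}$: that it is a semigroup of contractions, and that it is strongly continuous at $t=0$. Indeed, a contraction semigroup that is strongly continuous at $0$ is automatically strongly continuous on all of $[0,\infty[$: for $t\to t_0$ one writes $\widetilde P_t-\widetilde P_{t_0}=\widetilde P_{\min(t,t_0)}\bigl(\widetilde P_{|t-t_0|}-\mathrm{Id}\bigr)$ and invokes the contraction bound. So first I would record the semigroup identity $\widetilde P_t\widetilde P_r=\widetilde P_{t+r}$, which is immediate from the definition $(\widetilde P_t\psi)_s=P_t\psi_{s+t}$ for $s+t<T$ (and $=0$ otherwise) together with $P_tP_r=P_{t+r}$.

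For the contraction property on $E:=L^{2}([0,T[\times\mathbb{R}^d)$ the computation is one line:
\[
\|\widetilde P_t\psi\|_E^2=\int_0^{T-t}\|P_t\psi_{s+t}\|_2^2\,ds\le\int_0^{T-t}\|\psi_{s+t}\|_2^2\,ds=\int_t^T\|\psi_r\|_2^2\,dr\le\|\psi\|_E^2,
\]
using only that $P_t$ is an $L^2(\mathbb{R}^d)$‑contraction (it is the symmetric Markovian semigroup of the Dirichlet form $\mathcal E$). On $L^{2}([0,T[;H^1(\mathbb{R}^d))$ the same computation works once one knows $\|P_tg\|_{H^1}\le\|g\|_{H^1}$; since $X$ is a symmetric L\'evy process, $P_t$ is the Fourier multiplier with symbol $e^{-t\Psi(\cdot)}$, $|e^{-t\Psi}|\le1$, so it contracts every Hilbert norm of the form $\bigl(\int w(\xi)|\widehat g(\xi)|^2\,d\xi\bigr)^{1/2}$, in particular $\|\cdot\|_{H^1}$ — equivalently, use translation invariance $\nabla P_tg=P_t\nabla g$ and apply the $L^2$‑contraction componentwise.

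For strong continuity at $0$ on $E$, I would split
\[
\|\widetilde P_t\psi-\psi\|_E^2=\int_0^{T-t}\|P_t\psi_{s+t}-\psi_s\|_2^2\,ds+\int_{T-t}^T\|\psi_s\|_2^2\,ds,
\]
the last term tending to $0$ by absolute continuity of $s\mapsto\|\psi_s\|_2^2$ on $[0,T]$. In the first term use $\|P_t\psi_{s+t}-\psi_s\|_2\le\|\psi_{s+t}-\psi_s\|_2+\|P_t\psi_s-\psi_s\|_2$; after extending $\psi$ by $0$ to $\mathbb{R}$, the first piece contributes $\int_{\mathbb{R}}\|\psi_{s+t}-\psi_s\|_2^2\,ds\to0$ by continuity of translations in the Bochner space $L^2(\mathbb{R};L^2(\mathbb{R}^d))$, while the second contributes $\int_0^T\|P_t\psi_s-\psi_s\|_2^2\,ds\to0$ by dominated convergence (pointwise convergence in $s$ from strong continuity of $P_t$ on $L^2(\mathbb{R}^d)$, domination by $4\|\psi_s\|_2^2\in L^1([0,T])$). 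The argument on $L^{2}([0,T[;H^1(\mathbb{R}^d))$ is word for word the same with $\|\cdot\|_2$ replaced by $\|\cdot\|_{H^1}$, once one knows that $P_t$ is also strongly continuous on $H^1(\mathbb{R}^d)$ — which follows from Dirichlet‑form theory (strong continuity on the form domain with the $\mathcal E_1$‑norm $\mathcal E(\cdot,\cdot)+(\cdot,\cdot)$, equivalent here to $\|\cdot\|_{H^1}$ since $\alpha\in(0,2)$ forces $\mathcal E^2(u,u)\le C\|u\|_{H^1}^2$) or again from the Fourier representation and dominated convergence.

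The step I expect to be the main (mild) obstacle is the strong continuity: one must combine simultaneously the time‑translation continuity in the Bochner norm and the slice‑wise $L^2(\mathbb{R}^d)$‑ (resp. $H^1$‑) strong continuity of $P_t$ through a dominated‑convergence argument, and separately dispose of the boundary contribution over $[T-t,T]$ created by the truncation at $T$; the contraction estimates themselves are each a single Fubini computation. I would also pause to check carefully the equivalence of the $\mathcal E_1$‑ and $H^1$‑norms invoked above, since that is precisely what lets the nonlocal part $\mathcal E^2$ be absorbed into the classical $H^1$ estimates.
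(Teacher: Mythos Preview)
Your proof is correct and is the standard argument for this type of statement. The paper itself omits the proof entirely, deferring to Lemma~2 of \cite{MS10}; your direct verification of the contraction property via the $L^2$-- (resp.\ $H^1$--) contractivity of $P_t$, together with the split of $\widetilde P_t\psi-\psi$ into a time--translation piece, a slice--wise $(P_t-\mathrm{Id})$ piece, and the boundary contribution over $[T-t,T]$, is exactly the expected route and almost certainly what \cite{MS10} does. The only point worth tightening in presentation is the equivalence $\mathcal E_1\asymp\|\cdot\|_{H^1}^2$: you correctly identify it as the place where $\alpha\in(0,2)$ enters (so that $|\xi|^\alpha\le 1+|\xi|^2$ controls $\mathcal E^2$), and this is indeed all that is needed.
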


Now we give the definition of the potentials belonging to $\widetilde{F}$, which appears in our obstacle problem.

\begin{definition}
\label{potential} (i) A function $\psi :\left[ 0,T\right] \times \mathbb{R}%
^{d}\rightarrow \overline{\mathbb{R}}$ is called quasicontinuous provided
that for each $\varepsilon >0,$ there exists an open set, $D_{\varepsilon
}\subset \left[ 0,T\right] \times \mathbb{R}^{d},$ such that $\psi $ is
finite and continuous on $D_{\varepsilon }^{c}$ and
\begin{equation*}
\mathbb{P}^{m}\left( \left\{ \omega' \in \Omega ^{\prime }|\exists \, t\in \left[ 0,T
\right] \ s.t.\left( t,X_{t}\left( \omega' \right) \right) \in D_{\varepsilon
}\right\} \right) <\varepsilon .
\end{equation*}

(ii) A function $u:[ 0,T] \times \mathbb{R}^{d}\rightarrow[0,\infty] $ is called a regular potential, provided that its restriction to 
$[0,T[\times\mathbb{ R}^d$ is excessive with respect to the  time-space semigroup, it is
quasicontinuous, $u\in \widetilde{F}$ and $\lim_{t\rightarrow T}u_{t}=0$ in $L^{2}( \mathbb{R}^{d}) $.
\end{definition}

Observe that if a function $\psi $ is quasicontinuous, then the process $
( \psi _{t}( X_{t}) ) _{t\in[ 0,T]}$ is
c\`adl\`ag and has only inaccessible jumps. Next we will present the basic properties of the regular
potentials. Due to the expression of the semigroup $(\widetilde{P}_{t})_{t>0}$ in terms of the density, it follows that two excessive
functions which represent the same element in $\widetilde{F}$ should
coincide.

\begin{theorem}
\label{potentielregulier}
Let $u\in \widetilde{F}.$ Then $u$ has a version which is a regular
potential if and only if there exists a continuous increasing process $
A=( A_{t}) _{t\in [0,T]}$ which is $( \mathcal{F}_{t}) _{t\in [0,T]}-$adapted and such that 
$A_{0}=0$, $\mathbb{E}^{m}[ A_{T}^{2}] <\infty $ and
\begin{equation}
u_{t}(X_{t})=\mathbb{E}\left[ A_{T}\,\big|\mathcal{F}_{t}\right]
-A_{t},\;\mathbb{P}^{m}\mathbf{-}a.s.,  \tag{$i$}  \label{caf}
\end{equation}
for each $t\in[0,T] .$ The process $A$ is uniquely determined
by these properties. Moreover, the following relations hold:
\begin{equation}
u_{t}\left( X_{t}\right) =A_{T}-A_{t}-\sum_{i=1}^{d}\int_{t}^{T}\nabla
u_{s}\left( X_{s-}\right) dW_{s}-\int_t^T\int_{\mathbb R^d} u_s(X_{s-}+z)-u_s(X_{s-})\tilde N(dz,ds),\ \mathbb{P}^{m}-a.s.\, ,  \tag{$ii$}
\label{retro}
\end{equation}%
\begin{equation}
\left\Vert u_{t}\right\Vert _2^{2}+\int_{t}^{T}\mathcal{E}( u_{s},u_s) \,
ds=\mathbb{E}^{m}\left( A_{T}-A_{t}\right) ^{2}  \, , \tag{$iii$}  \label{energ}
\end{equation}%
\begin{equation}
\left( u_{0},\mathcal{\varphi }_{0}\right) +\int_{0}^{T}  %
\mathcal {E} (u_{s},  \mathcal{\varphi }_{s}) +\big( u_{s},\partial _{s}%
\mathcal{\varphi }_{s}\big) \, ds=\int_{0}^{T}\int_{\mathbb{R}^{d}}%
\mathcal{\varphi }\left( s,x\right) \nu \left( dsdx\right) ,  \tag{$iv$}
\label{faible}
\end{equation}%
for each test function $\mathcal{\varphi \in D}_T,$ where $\nu $ is the
measure defined by%
\begin{equation}
\mathcal{\nu }\left( \mathcal{\varphi }\right) =\mathbb{E}^{m}\int_{0}^{T}\mathcal{%
\varphi }\left( t,X_{t}\right) dA_{t},\ \mathcal{\varphi \in C}_{c}\left( %
\left[ 0,T\right] \times \mathbb{R}^{d}\right) .  \tag{$v$}  \label{mesure}
\end{equation}
\end{theorem}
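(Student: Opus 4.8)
The plan is to prove the equivalence by constructing the additive functional $A$ from the resolvent side, then passing to its potential, and finally extracting the decomposition and energy identities from the theory of continuous additive functionals (CAFs).

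For the ``if'' direction I would start from a regular potential $u$ and its defining property that $u|_{[0,T[\times\mathbb R^d}$ is excessive for the time-space semigroup $(\widetilde P_t)_{t>0}$. By the Riesz-type decomposition of excessive functions (as in \cite{fot}, \cite{BlumenthalGetoor}, \cite{DM}), since $u\in\widetilde F$ and $\lim_{t\to T}u_t=0$ in $L^2(\mathbb R^d)$, $u$ is a potential of a (unique) positive continuous additive functional $A$ of the time-space process $(t,X_t)$, i.e.\ $u_t(X_t)=\mathbb E[A_T-A_t\mid\mathcal F_t]$, $\mathbb P^m$-a.s. This is exactly $(i)$; the square-integrability $\mathbb E^m[A_T^2]<\infty$ comes from the fact that $u\in\widetilde F\subset L^2([0,T];H^1)$ together with the energy computation, which I would do at the same time as $(iii)$. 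Uniqueness of $A$ follows because a CAF is determined by its potential (two excessive functions agreeing in $\widetilde F$ coincide, as already noted before the statement). Conversely, if such an $A$ exists, then $u_t(X_t)=\mathbb E[A_T\mid\mathcal F_t]-A_t$ is a difference of a martingale and an increasing process, hence the optional projection argument (regularization of the supermartingale $t\mapsto\mathbb E[A_T\mid\mathcal F_t]-A_t$) shows $u$ has an excessive, hence quasicontinuous, version; that this version lies in $\widetilde F$ and tends to $0$ at $T$ is read off from $(i)$ and the $L^2$ bounds.

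For $(ii)$, the idea is to apply the martingale representation theorem in the filtration $(\mathcal F_t)$ generated by $W$ and the Poisson random measure $N$: the martingale $M_t:=\mathbb E[A_T\mid\mathcal F_t]$ has the representation $M_t=M_0+\sum_i\int_0^t\zeta^i_s\,dW^i_s+\int_0^t\int_{\mathbb R^d}\eta_s(z)\,\tilde N(dz,ds)$. Evaluating $(i)$ at $t$ and at $T$ and using $u_T=0$, $A_0=0$ gives $u_t(X_t)=A_T-A_t-(M_T-M_t)$, and one then identifies the integrands: $\zeta^i_s=\nabla u_s(X_{s-})$ and $\eta_s(z)=u_s(X_{s-}+z)-u_s(X_{s-})$. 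The identification of $\zeta$ with $\nabla u$ is the standard Fukushima-type computation (approximate $u$ by smooth functions $\varphi^k\in\mathcal D_T$, apply Itô to $\varphi^k(s,X_s)$, and pass to the limit in $L^2(\mathbb P^m)$ using $\int_0^T\mathcal E(u_s)\,ds<\infty$); the jump integrand is forced by matching the purely discontinuous martingale parts. Then $(iii)$ follows by taking $\mathbb E^m$ of the square of $(ii)$ at time $t$, using the Itô isometry: the Brownian part contributes $\mathbb E^m\int_t^T\|\nabla u_s\|_2^2\,ds=2\int_t^T\mathcal E^1(u_s)\,ds$, the jump part contributes $\mathbb E^m\int_t^T\int_{\mathbb R^d}(u_s(X_{s-}+z)-u_s(X_{s-}))^2 v(dz)\,ds=2\int_t^T\mathcal E^2(u_s)\,ds$ (by the invariance of $m$ under $P_t$ and the definition of $\mathcal E^2$), and the martingale cross-terms vanish, leaving $\|u_t\|_2^2+\int_t^T\mathcal E(u_s,u_s)\,ds=\mathbb E^m(A_T-A_t)^2$.

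Finally, $(iv)$ and $(v)$: define $\nu$ by $(v)$; it is a well-defined $\sigma$-finite Radon measure on $[0,T]\times\mathbb R^d$ because $\mathbb E^m[A_T^2]<\infty$ makes $\varphi\mapsto\mathbb E^m\int_0^T\varphi(t,X_t)\,dA_t$ a positive linear functional on $C_c$, and it is the Revuz measure of $A$ with respect to $m$. For $(iv)$, take $\varphi\in\mathcal D_T$, apply Itô's formula to $s\mapsto\varphi(s,X_s)u_s(X_s)$ using $(ii)$ for the $u_s(X_s)$ factor and the (forward) SDE for $X$ in $\varphi$, take $\mathbb E^m$, and use $m$-invariance to turn the resulting time integrals of $\mathcal A$-terms into $\int_0^T\mathcal E(u_s,\varphi_s)\,ds$; the $dA_s$ term becomes $\int_0^T\int_{\mathbb R^d}\varphi\,d\nu$ by the definition $(v)$, and the boundary term at $T$ vanishes since $u_T=0$. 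The main obstacle will be the rigorous identification of the integrand in the martingale representation with $\nabla u$ when $u$ is only in $\widetilde F$ (not smooth): this requires the mollification/limit argument with the energy estimate controlling the $L^2(\mathbb P^m)$ convergence of the stochastic integrals, and care that the jump part of $X$ (the Lévy measure $|z|^{-d-\alpha}\,dz$, singular at the origin) is handled through the compensated integral — this is precisely the new difficulty relative to \cite{MS10}, and it is where I would spend most of the effort; everything else follows the template of \cite{MS10} and the general CAF theory, which is why the earlier lemmas were quoted without proof.
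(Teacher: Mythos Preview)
Your proposal is correct in outline but follows a genuinely different route from the paper. The paper does \emph{not} invoke the general Riesz/CAF correspondence or the martingale representation theorem. Instead it constructs $A$ explicitly via a resolvent approximation: set $u^{n}=n\widetilde U_{n}\bar u$ and $f^{n}=n(\bar u-u^{n})\ge 0$, so that $u^{n}$ solves $(\partial_{t}+\mathcal A)u^{n}+f^{n}=0$ with $u^{n}_{T}=0$. Then $A^{n}_{t}:=\int_{0}^{t}f^{n}_{s}(X_{s})\,ds$ and the backward BSDE representation of $u^{n}_{t}(X_{t})$ already carries the correct integrands $\nabla u^{n}(X_{s-})$ and $u^{n}_{s}(X_{s-}+z)-u^{n}_{s}(X_{s-})$. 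The preceding lemma ($\alpha\widetilde U_{\alpha}\to I$ in $L^{2}([0,T[;H^{1})$) forces $A^{n}_{T}$ to be Cauchy in $L^{2}(\mathbb P^{m})$, and Lemma~\ref{Dini} upgrades pointwise to uniform convergence on trajectories; passing to the limit gives $(i)$--$(iii)$ simultaneously. The rest is deferred to \cite{MS10}.

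The trade-off: your abstract approach is cleaner conceptually, but you correctly flag that identifying the martingale integrands with $\nabla u$ and the jump increment is the hard step, and you still owe a separate argument for $\mathbb E^{m}[A_{T}^{2}]<\infty$ (Doob--Meyer alone gives only $\mathbb E^{m}[A_{T}]<\infty$; getting the second moment from $u\in\widetilde F$ without already having $(ii)$--$(iii)$ needs either a localization/stopping argument or an appeal to finite-energy CAF theory in the time-space setting, neither of which is entirely routine). The paper's resolvent approximation sidesteps both issues: the integrands are correct at the approximate level by construction, and the $L^{2}$ bound on $A^{n}_{T}$ (hence on $A_{T}$) drops out of the energy identity $(**)$ for $u^{n}$. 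So the paper's route is more self-contained for this particular operator, while yours would generalize more readily if the needed pieces of CAF theory are available off the shelf. (Minor slip: what you call the ``if'' direction is actually the ``only if'' direction.)
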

Before proving Theorem \ref{potentielregulier}, we recall a useful lemma. 
\begin{lemma}\label{Dini}(\cite{DM01}, pp.202)
Let $\{f_n\}$ be a decreasing sequence of positive c\`adl\`ag functions on $[0,\infty]$ which tend poitwisely to $0$ as do their left limits. Then the sequence $\{f_n\}$ converges to $0$ uniformly.
\end{lemma}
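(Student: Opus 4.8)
The plan is to argue by contradiction, exploiting the compactness of the extended half-line $[0,\infty]$ together with the c\`adl\`ag regularity to deduce a uniform statement from the two pointwise hypotheses. Since $\{f_n\}$ is decreasing in $n$, the numbers $M_n := \sup_{t\in[0,\infty]} f_n(t)$ form a nonincreasing sequence of nonnegative reals, so $M_n \downarrow L$ for some $L\geq 0$; uniform convergence to $0$ is precisely the assertion $L=0$. Suppose instead $L>0$ and put $\epsilon := L/2$. As $M_n\geq L>\epsilon$ for every $n$, I may choose for each $n$ a point $t_n\in[0,\infty]$ with $f_n(t_n)>\epsilon$.

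The decisive use of monotonicity is the following: for every fixed $m$ and every $n\geq m$ one has $f_m(t_n)\geq f_n(t_n)>\epsilon$, so the single point $t_n$ simultaneously keeps all earlier functions $f_m$ ($m\leq n$) above $\epsilon$. By compactness of $[0,\infty]$ I extract a subsequence with $t_{n_k}\to t_\ast$. Splitting according to whether infinitely many of the $t_{n_k}$ lie in $[t_\ast,\infty]$ or in $[0,t_\ast)$ and passing to a further subsequence, I may assume that either $t_{n_k}\geq t_\ast$ for all $k$, or $t_{n_k}<t_\ast$ for all $k$, so that only one of the two one-sided regularity properties of the c\`adl\`ag function will be invoked.

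In the first case, right-continuity of $f_m$ at $t_\ast$ yields $f_m(t_{n_k})\to f_m(t_\ast)$ as $k\to\infty$ (when $t_\ast=\infty$ this case forces $t_{n_k}=\infty$, and one uses the value $f_m(\infty)$ directly). Since $f_m(t_{n_k})>\epsilon$ for all $k$ large enough that $n_k\geq m$, I get $f_m(t_\ast)\geq\epsilon$ for every $m$, contradicting $f_m(t_\ast)\to 0$. In the second case the existence of left limits gives $f_m(t_{n_k})\to f_m(t_\ast-)$, whence $f_m(t_\ast-)\geq\epsilon$ for every $m$, contradicting the hypothesis that the left limits tend to $0$ as well. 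In either case $L=0$, which is the claimed uniform convergence.

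I expect no serious obstacle here; the result is essentially a Dini-type theorem and the difficulties are bookkeeping ones. The two points that need care are the order of quantifiers, namely fixing $m$ first, then letting $k\to\infty$ to pass the bound $f_m(t_{n_k})>\epsilon$ through the one-sided limit, and only afterwards letting $m\to\infty$ to reach the contradiction; and the treatment of the two boundary points, since at $t_\ast=0$ only the right-approach case can occur, while at $t_\ast=\infty$ the approach is either through values $t_{n_k}=\infty$ or strictly from the left, where the left limit coincides with $f_m(\infty)$ and the pointwise hypothesis applies. The role of the left-limit hypothesis is exactly to supply, in the approach-from-the-left case, the ingredient that plain right-continuity cannot provide, which is why ordinary Dini's theorem fails for c\`adl\`ag functions without it.
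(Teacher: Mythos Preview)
Your argument is correct and is the standard Dini-type proof for c\`adl\`ag functions. Note, however, that the paper does not actually prove this lemma: it is simply quoted from Dellacherie--Meyer (\cite{DM01}, p.~202) with no accompanying proof, so there is nothing in the paper to compare your approach against. Your self-contained argument is therefore a genuine addition rather than a reproduction.

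One small wording issue: in your closing remarks you say that at $t_\ast=\infty$ ``the left limit coincides with $f_m(\infty)$.'' A c\`adl\`ag function on $[0,\infty]$ can perfectly well have $f_m(\infty-)\neq f_m(\infty)$, so this identification is not justified. It is also unnecessary: the hypothesis that the \emph{left limits} tend to $0$ pointwise already gives $f_m(\infty-)\to 0$ directly, which is exactly what your Case~2 needs at $t_\ast=\infty$. Just invoke the left-limit hypothesis there rather than the pointwise-value hypothesis, and the sentence is clean.
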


\textbf{Proof of Theorem \ref{potentielregulier}}:
The uniqueness of the increasing process in the
representation (i) comes from the uniqueness in the Doob-Meyer
decomposition. Let us now assume that $\overline{u}$ is a regular potential which is a
version of $u.$ We will use an approximation of $\overline{u}$ constructed
with the resolvent. By the resolvent equation one has%
\begin{equation*}
\alpha \widetilde{U}_{\alpha }\overline{u}=\alpha \widetilde{U}_{0}\left(
\overline{u}-\alpha \widetilde{U}_{\alpha }\overline{u}\right) .
\end{equation*}
Set $f^{n}=n(\overline{u}-n\widetilde{U}_{n}\overline{u})
$ and $u^{n}=n\widetilde{U}_{n}\overline{u}=\widetilde{U}_{0}f^{n}.$ Since $%
\overline{u}$ is excessive, one has $f^{n}\geq 0$ and $u^{n},n\in \mathbb{N}%
^{\ast },$ is an increasing sequence of excessive functions with limit $%
\overline{u}.$ In fact $u^{n},n\in \mathbb{N}^{\ast },$ are potentials and
their trajectories are continuous. Moreover, the trajectories $%
t\rightarrow \overline{u}_{t}( X_{t}) $ are c\`adl\`ag on $[0,T[$
by the quasi-continuity of $\overline{u}.$ The process $( u_{t}(X_{t})) _{t\in[0,T[}$ is a super-martingale and, because $
\lim_{t\rightarrow T}u_{t}=0$ in $L^2$,  it is a potential and the trajectories have
null limits at $T$. By quasicontinuity of the functions and the fact that $X$ is quasi-left-continuous, we have $^p(u^n_t(X_t))=u^n_t(X_{t-})$ and $^p(u_t(X_t))=u_t(X_{t-})$, where $^p(\cdot)$ denotes the predictable projection. Therefore, $(u^n-u)(X)$ decreasingly converges to $0$ as do their left limits. Then, Lemma \ref{Dini} implies that this approximation also holds uniformly on the
trajectories, on the closed interval $\left[ 0,T\right] ,$%
\begin{equation*}
\lim_{n\rightarrow \infty }\sup_{0\leq t\leq T}\left\vert u_{t}^{n}\left(
X_{t}\right) -\overline{u}_{t}\left( X_t\right) \right\vert =0,\ \ \mathbb{P}^{m}-a.s..
\end{equation*}%
The function $u^{n}$ solves the equation $\left( \partial _{t}+\mathcal {A}\right)%
u^{n}+f^{n}=0$ with the condition $u_{T}^{n}=0$ and its backward
representation is
\begin{equation*}
u_{t}^{n}(X_{t}) =\int_{t}^{T}f_{s}^{n}( X_{s})ds-\sum_{i=1}^{d}\int_{t}^{T}\partial _{i}u_{s}^{n}( X_{s})
dW_{s}^{i}-\int_t^T\int_{\mathbb {R}^d} u^n_s(X_{s-}+z)-u^n_s(X_{s-})\tilde N(dz,ds).
\end{equation*}
If we set $A_{t}^{n}=\int_{0}^{t}f_{s}^{n}\left( X_{s}\right) ds,$ after
conditioning, this representation gives%
\begin{equation*}
\begin{split}
u_{t}^{n}\left( X_{t}\right)
&=A_{T}^{n}-A_{t}^{n}-\sum_{i=1}^{d}\int_{t}^{T}\partial _{i}u_{s}^{n}\left(
X_{s}\right) dW_{s}^{i}-\int_t^T\int_{\mathbb {R}^d} u^n_s(X_{s-}+z)-u^n_s(X_{s-})\tilde N(dz,ds)\\
&=\mathbb{E}^{m}\left[ A_{T}^{n}|\mathcal{F}_{t}\right]
-A_{t}^{n}.\hspace{9.7cm}(\ast)
\end{split}
\end{equation*}%
%In particular one deduces%
%\begin{equation*}
%\begin{split}
%&u_{0}^{n}\left( X_{0}\right) =\mathbb{E}^{m}\big[ A_{T}^{n}/\mathcal{G}_{0}%
%\big]\\
%&=\mathbb{E}^m \big [A_{T}^{n}-\sum_{i=1}^{d}\int_{0}^{T}\partial _{i}u_{s}^{n}\left(
%X_{s}\right) dW_{s}^{i}+\int_0^T\int_{\mathbb {R}^d} u^n_s(X_{s-}+z)-u^n_s(X_{s-})\tilde N(dz,ds)\big ]
%\end{split}
%\end{equation*}%
By the relation $(*)$, it follows that
\begin{equation*}\begin{split}
&\mathbb{E}^{m}\left( A_{T}^{n}-A_{t}^{n}\right) ^{2} \\=&\,\mathbb{E}^{m}\Big( u_{t}^{n}(X_{t}) +\sum_{i=1}^{d}\int_{t}^{T}\partial _{i}u_{s}^{n}(X_{s}) dW_{s}^{i}
+\int_t^T\int_{\mathbb R^d} u^n_s(X_{s-}+z)-u^n_s(X_{s-})\tilde N(dz,ds)\Big) ^{2}\\
=&\left\|u_{t}^{n}\right\|_2 ^{2}+\int_{t}^{T} \mathcal {E}(u_{s}^{n})ds.\hspace{9.9cm}(\ast \ast)
\end{split}\end{equation*}
A similar relation holds for differences. In particular, one has
\begin{equation*}
\mathbb{E}^{m}( A_{T}^{n}-A_{T}^{k}) ^{2}=\|u_{0}^{n}-u_{0}^{k}\|^{2}+2\int_{0}^{T}\mathcal {E}(u_{s}^{n}-u_{s}^{k})ds.
\end{equation*}

Moreover, the preceding lemma ensures that $\lim_{\alpha
\rightarrow \infty }\alpha \widetilde{U}_{\alpha }=I$ in the space $L^{2}([ 0,T[ ;H^{1}( \mathbb{R}^{d}))$, which implies
\begin{equation*}
\lim_{n\rightarrow 0}\int_{0}^{T}\mathcal {E}(
u_{s}^{n}-u_{s}^{k}) \, \,  ds=0.
\end{equation*}%
These last relations imply that there exists a limit $\lim_{n}A_{T}^{n}=:A_{T}$ in the sense of $L^{2}(\mathbb{ P}^{m}).$

%Let us denote by $M^{n}=\left( M_{t}^{n}\right) _{t\in \left[ 0,T\right]},M=\left( M_{t}\right) _{t\in \left[ 0,T\right] }$ the martingales given by the conditional expectations 
Set $M_{t}^{n}:=\mathbb{E}^{m}\left[ A_{T}^{n}|\mathcal{F}
_{t}\right] ,M_{t}:=\mathbb{E}^{m}\left[ A_{T}|\mathcal{F}_{t}\right] .$ Then
one has $\lim_{n \to \infty }M^{n}=M$ in $L^{2}\left( \mathbb{P}^{m}\right)$ and hence
\begin{equation*}
\lim_{n \rightarrow \infty }\mathbb{E}^{m}\sup_{0\leq t\leq T}\left\vert M_{t}^{n}-M_{t}\right\vert^{2}=0.
\end{equation*}

Then the relation $u_{t}^{n}\left( X_{t}\right) =M_{t}^{n}-A_{t}^{n}$ shows
that the processes $A^{n},n\in \mathbb{N}^{\ast },$ also converge uniformly
on the trajectories to a continuous process $A=\left( A_{t}\right) _{t\in %
\left[ 0,T\right] }.$ The inequality%
\begin{equation*}
\sup_{0\leq t\leq T}\left\vert A_{t}^{n}-A_{t}\right\vert \leq
A_{T}+\left\vert A_{T}^{n}-A_{T}\right\vert ,
\end{equation*}%
ensure the conditins to pass to the limit and get%
\begin{equation*}
\lim_{n\rightarrow \infty }\mathbb{E}^{m}\sup_{0\leq t\leq T}\left\vert
A_{t}^{n}-A_{t}\right\vert ^{2}=0.
\end{equation*}%
Passing to the limit in the relations $(\ast)$ and $(\ast\ast)$ one deduces the relations
(i), (ii) and (iii). 
The rest of the proof are almost the same as that of Theorem 2 in \cite{MS10}, so we omit it. $\hspace{2.1cm}\Box$\\

The following lemma is concerning on the uniqueness of the potential related to a Randon measure via relation (iv). 
For its proof, one can refer to \cite{MS10}. 
\begin{lemma}
Let $u$ be a regular potential and $\nu $ a Radon measure on $\left[ 0,T%
\right] \times \mathbb{R}^{d}$ such that relation (iv) holds. Then one has%
\begin{equation*}
\left( \phi ,u_{t}\right) =\int_{t}^{T}\int_{\mathbb{R}^{d}}\left( \int_{%
\mathbb{R}^{d}}\mathcal{\phi }\left( x\right) p_{s-t}\left( x-y\right)
dx\right) \nu \left( dsdy\right) ,
\end{equation*}%
for each $\phi \in L^{2}\left( \mathbb{R}^{d}\right) $ and $t\in \left[ 0,T%
\right] .$
\end{lemma}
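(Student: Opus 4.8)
The identity says that $u$ is the potential of $\nu$ attached to the (backward) space--time semigroup, so, since the only property of the given Radon measure $\nu$ at our disposal is the weak relation (iv), the plan is to extract the formula from (iv) by feeding it a suitable family of test functions. Fix $t\in(0,T)$. By linearity, by the a priori bound $|(\phi,u_t)|\le\|u_t\|_2\|\phi\|_2$, and because both sides are monotone in $\phi\ge0$, it suffices to treat $\phi\in\mathcal{C}_c^{\infty}(\mathbb{R}^d)$ with $\phi\ge0$; the extension to $\phi\in L^2$ is done at the end by splitting $\phi=\phi^+-\phi^-$ and a monotone approximation from $\mathcal{C}_c^{\infty}$.

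For a small $\eps>0$ I would insert into (iv) the test function $\varphi^{\eps}(s,\cdot)=\gamma_{\eps}(s)\,P_{(s-t+\eps)^+}\phi$, where $\gamma_{\eps}\in\mathcal{C}^{\infty}([0,T])$ is nondecreasing, vanishes on $[0,t-\eps]$ and equals $1$ on $[t,T]$; this can be arranged so that $\varphi^{\eps}$ is smooth in $(s,x)$. On $[t,T]$ it solves $\partial_s\varphi^{\eps}_s=\mathcal{A}\varphi^{\eps}_s$, so, using $\mathcal{E}(v,w)=-(v,\mathcal{A}w)$, the combination $\mathcal{E}(u_s,\varphi^{\eps}_s)+(u_s,\partial_s\varphi^{\eps}_s)=(u_s,(\partial_s-\mathcal{A})\varphi^{\eps}_s)$ occurring in (iv) is supported in the slab $s\in[t-\eps,t]$, where it behaves like an approximate unit point mass at $t$ times $\phi$. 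Since $\varphi^{\eps}$ is neither compactly supported in $x$ nor a finite sum of products, I would first apply (iv) to $\theta_k\varphi^{\eps}$ with spatial cut-offs $\theta_k\uparrow1$ (being smooth and compactly supported, (iv) holds for these by approximating with elements of $\mathcal{D}_T$ in a $\mathcal{C}^{1}_t\mathcal{C}^{2}_x$-type topology, in which all four terms of (iv) are continuous), and then let $k\to\infty$: on the $\nu$-side this is a monotone limit since $\phi\ge0$, while on the remaining terms one uses $u\in\widetilde{F}\subset L^2([0,T];H^1)$ together with $\|P_{\sigma}\psi\|_2\le\|\psi\|_2$, $\nabla P_{\sigma}\phi=P_{\sigma}\nabla\phi$ and $\mathcal{A}\phi\in L^2$ (the nonlocal part of $\mathcal{A}\phi$ is a bounded compactly supported function plus a tail $O(|x|^{-(d+\alpha)})$). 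This yields (iv) for $\varphi^{\eps}$ with a finite right-hand side, and in particular the $\nu$-integrability of $(P_{s-t}\phi)(\cdot)$ over $(t,T]\times\mathbb{R}^d$.

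Then I would let $\eps\downarrow0$. On the left, $\varphi^{\eps}_0=0$, and since $s\mapsto(u_s,\phi)$ and $s\mapsto(u_s,\mathcal{A}\phi)$ are continuous (because $u\in\widetilde{F}$), the left side of (iv) converges to $(\phi,u_t)$. On the right, $\nu(\varphi^{\eps})\to\int_{(t,T]}\int_{\mathbb{R}^d}(P_{s-t}\phi)(y)\,\nu(dsdy)$: on $\{s>t\}$ one has $\varphi^{\eps}=P_{s-t+\eps}\phi\to P_{s-t}\phi$ and dominated convergence applies; the slice $\{s=t\}$ carries no $\nu$-mass because $\nu$ has no time-atom, which itself follows from (iv) by testing against a shrinking time-bump times a spatial test function; and on $\{s<t\}$, where $\varphi^{\eps}=\gamma_{\eps}(s)P_{s-t+\eps}\phi$ with $0\le s-t+\eps<\eps$, one uses $(P_{\sigma}\phi)(x)\le\|\phi\|_{\infty}\mathbf{1}_{N}(x)+C\sigma$ for $\sigma\le1$ ($N$ a fixed compact neighbourhood of $\mathrm{supp}\,\phi$, the Lévy tail giving $(P_\sigma\phi)(x)\lesssim\sigma|x|^{-(d+\alpha)}$ off $\mathrm{supp}\,\phi$), together with $\nu([t-1,t)\times N)<\infty$ and the no-atom property, to see that this part tends to $0$. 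Hence $(\phi,u_t)=\int_{(t,T]}\int(P_{s-t}\phi)(y)\,\nu(dsdy)$; and since the Lévy process is symmetric, $p_{s-t}(x-y)=p_{s-t}(y-x)$, so $(P_{s-t}\phi)(y)=\int\phi(x)p_{s-t}(x-y)\,dx$, which is exactly the claimed identity, and it passes to $\phi\in L^2$ as above.

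The step I expect to be the real work is the passage to the limit against $\nu$: since $\nu$ may be an infinite measure and the natural test functions lie outside $\mathcal{D}_T$, one must choose the truncations $\theta_k$ and the profile $\gamma_{\eps}$ so that $\nu(\theta_k\varphi^{\eps})$ and $\nu(\varphi^{\eps})$ genuinely converge; keeping $\phi\ge0$ (so monotone convergence is available on the measure side), using only $L^2$-contractivity and translation invariance of $(P_{\sigma})$ on the analytic side, and invoking the off-diagonal estimate $p_{\sigma}(w)\lesssim\sigma|w|^{-(d+\alpha)}$ for the transition slab, is what makes it go through. A shorter but less self-contained alternative is probabilistic: from Theorem \ref{potentielregulier}, $u_t(X_t)=\mathbb{E}^m[A_T-A_t\mid\mathcal{F}_t]$, and since $m$ is $P_t$-invariant, $(\phi,u_t)=\mathbb{E}^m[\phi(X_t)(A_T-A_t)]$; the time-reversibility of the $m$-symmetric space--time process moves the increment onto $[0,T-t]$, the Markov property (predictable projection) replaces $\phi(X_{T-t})$ inside the $dA$-integral by $(P_{T-t-r}\phi)(X_r)$, and reversing time back and invoking the Revuz identity (v) gives $\int_{(t,T]}\int(P_{s-t}\phi)(y)\,\nu(dsdy)$ — legitimate because $\mathcal{D}_T$ determines Radon measures, so the $\nu$ of the lemma necessarily coincides with the measure of (v). This route trades the delicate limits for the time-reversal and Revuz-duality theory of positive continuous additive functionals.
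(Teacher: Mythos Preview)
The paper does not prove this lemma; it simply refers the reader to \cite{MS10}, so there is no in-paper argument to compare against.

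Your analytic route via (iv) is the natural one and the overall strategy is correct, but the passage to the limit on the $\nu$-side is not fully justified as written. On $(t,T]$ you assert that dominated convergence applies to $P_{s-t+\eps}\phi\to P_{s-t}\phi$, yet you do not exhibit a $\nu$-integrable dominating function; $P_\sigma\phi$ is not monotone in $\sigma$, and the only a priori control you have produced is the finiteness of $\nu(\varphi^{\eps_0})$ for a single fixed $\eps_0$, which does not dominate the whole family. Likewise, on the slab $[t-\eps,t)$ your bound $(P_\sigma\phi)(x)\le\|\phi\|_\infty\mathbf{1}_N(x)+C\sigma$ (or the sharper $C\sigma|x|^{-(d+\alpha)}$) must be integrated against $\nu$ over $[t-\eps,t)\times N^c$, and for a merely Radon $\nu$ nothing guarantees this is finite. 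These gaps are repairable---most simply by first identifying $\nu$ with the measure of Theorem \ref{potentielregulier}(v), which supplies the missing integrability through $\mathbb{E}^m A_T^2<\infty$---but as the argument stands it is incomplete at exactly the step you yourself flag as ``the real work''.

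Your probabilistic alternative is the cleaner and complete argument, and is presumably what \cite{MS10} does. The decisive first step is the one you note almost in passing: since $\mathcal{D}_T$ separates Radon measures on $[0,T]\times\mathbb{R}^d$, any Radon $\nu$ satisfying (iv) must coincide with the measure constructed in (v). After that, $(\phi,u_t)=\mathbb{E}^m[\phi(X_t)(A_T-A_t)]$ by $m$-invariance and relation (i), and the $m$-symmetry of $X$ lets you reverse time to rewrite this as $\mathbb{E}^m\int_t^T(P_{s-t}\phi)(X_s)\,dA_s$, which is the desired right-hand side by (v) and a monotone-class extension. I would lead with this argument rather than present it as an afterthought.
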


We now introduce the class of measures which intervene in the notion of
solution to the obstacle problem.

\begin{definition}
A nonnegative Radon measure $\nu $ defined on $[ 0,T] \times
\mathbb{R}^{d}$ is called regular if there exists a regular potential $u$ such that the relation (iv) from the above theorem is
satisfied.
\end{definition}

As a consequence of the preceding lemma we see that the regular measures are
always represented as in the  relation (v) of the theorem, with a certain
increasing process. We also note the following properties of a regular
measure, with the notation from the theorem.

\begin{enumerate}
\item A set $D\in \mathcal{B}\left( \left[ 0,T\right] \times \mathbb{R}%
^{d}\right) $ satisfies the relation $\nu(D) =0$ if and only if
$\int_{0}^{T}1_{D}\left( t,X_{t}\right) dA_{t}=0,\ \mathbb{P}^{m}-$a.s..

\item If a set $D\in \mathcal{B}\left( \left] 0,T\right[ \times \mathbb{R}%
^{d}\right) $ is polar, in the sense that%
\begin{equation*}
\mathbb{P}^m\left( \left\{ \omega \in \Omega ^{\prime }|\exists t\in \left[ 0,T\right]
,\left( t,X_{t}\left( \omega \right) \right) \in D\right\} \right) =0,
\end{equation*} then $\nu(D) =0.$
\end{enumerate}

\subsection{Hypotheses}
\label{Hypotheses}
Let $ B = (B_t)_{t\geq 0}$ be a standard $d^1$-dimentional Brownian motion on a probability space $( \Omega,\mathcal{F}^B, \mathbb{P})$. 
Over the time interval $[0,T]$  we define the backward filtration $(\mathcal{F}_{s,T}^{B})_{ s\in [0,T]}$ where $\mathcal{F}^B_{s,T}$ is the completion in $\mathcal{F}^B$ of $\sigma (B_{r}-B_{s};s\leq r\leq T)$.

We denote by $\cH_T$ the space of $H^1(\mathbb{R}^d)$-valued
predictable and $\mathcal{F}^B_{t,T}$-adapted processes   $(u_t)_{0\leq t \leq T}$ such that the trajectories $ t \rightarrow u_t $ are in $\widetilde{F}$ a.s. and
$$\left\| u\right\| _{T}^2 < \infty . $$

In the remainder of this paper we assume that the final condition $\Phi$ is a given function in $L^2 (\mathbb{R}^d)$ and the coefficients
\begin{eqnarray*}
f &:& [0,T]\times \Omega \times  \mathbb{R}^d \times \bbR \times \bbR^{d}
\rightarrow \bbR \;, \\
h&=& (h_1,...,h_{d^1}) \, : \,  [0,T]\times \Omega \times  \mathbb{R}^d \times \bbR \times \bbR^{d}
\rightarrow \bbR^{d^1} \; 
\end{eqnarray*}
are random functions predictable with respect to the backward filtration $(\mathcal{F}_{t,T}^{B})_{ t\in [0,T]}$. We set
\begin{equation*}
f (\cdot,\cdot,\cdot, 0,0):=f^0\quad \mbox{and} \quad   h ( \cdot,\cdot, \cdot ,0,0):=h^0=(h_1^0,...,h_{d^1}^0).
\end{equation*}
and assume the following hypotheses : \\[0.2cm]
 \textbf{Assumption (H)}: There exist  non-negative
 constants $C,\,\beta $ such that
\begin{enumerate}
\item[\textbf{(i)}]
 $ |f_t(\omega,x,y,z) -f( t,\omega, x,y^{\p},z^{\p}) | \leq \, C\big(|
y-y^{\p}| + |z-z^{\p}| \big) $
\item[\textbf{(ii)}] $  \Big(\sum_{j=1}^{d_1}| h_{j,t}(\omega,x,y,z) -h_j( t,\omega,x,y^{\p},z^{\p})
|^2\Big)^{\frac{1}{2}}\leq C \, | y-y^{\p}|+\, \beta \, |z-z^{\p}|,$
\item[\textbf{(iii)}]
  the contraction property (as in \cite{Denis2}) :
$\beta ^{2}< 1 \, $.
\end{enumerate}
\textbf{Assumption (HD2)}%
$$ \mathbb{E} \left(  \left\| f^0\right\|_{2,2}^2+\left\| h^0\right\| _{2,2}^2\right) <+\infty.$$
\textbf{Assumption (HO)} : The obstacle $ v (t,\omega, x)$ is a predictable random function w.r.t the backward filtration $(\mathcal{F}^B_{t,T})$. We also assume  that $ (t,x) \mapsto v(t,\omega, x) $  is $\mathbb{P}$\textbf{-}a.s. quasicontinuous on $[0,T]$ and satisfies $v (T, \cdot) \leq \Phi (\cdot).$\\

We recall that a usual solution (non reflected one) of the equation \eqref{SPDE1}
with final condition $ u_T = \Phi$, is a processus $ u \in \mathcal{H}_T$  such that for each
test function $\varphi \in \mathcal{D}_T$ and any $ t \in [0,T]$, we have a.s.

\begin{equation}
\label{weak:SPDE}
\begin{split}
& \int_{t}^{T }\big[\left( u_{s},\partial_{s}\varphi_s  \right) +
\mathcal {E} (u_{s},  \varphi_s )\big]ds- \big(\Phi,
\varphi_T \big) +
\big( u_t, \varphi_t \big)  = \int_{t}^{T} \left( f_s , \varphi_s \right)   ds
 + \int_{t}^{T} \left( h_s, \varphi_s \right)\cdot \overleftarrow{dB}_{s}.\\
\end{split}
\end{equation}
By Theorem 8 in \cite{Denis2} we have existence and uniqueness of the solution. Moreover, the solution belongs to $\mathcal{H}_T$. We denote by 
$ \mathcal{U }(\Phi, f,h)$ this solution.

\subsection{Quasi-continuity properties}
\label{section:Ito}
In this section we are going to prove the quasi-continuity of the solution of the linear equation, i.e. when  $f$ and $h$ do not depend of $u$ and $ \nabla u$.  To this end we first extend the double stochastic It\^o's formula to our framework. We start by proving the following doubly representation theorem.
\begin{theorem}
\label{FK}
Let $u\in \mathcal{H}_{T}$ be a solution of the equation
\[du_{t}+\mathcal {A}  u_t dt + f_tdt  + h_t\overleftarrow{ dB}_t = 0,\]
where $f,\,h$ are predictable processes such that
$$\mathbb{E } \int_0^T \big[\big \|f_t \big \|_2^2 + \big\|h_t \big \|_2^2 \big] \, dt  < \infty \quad  \mbox{and} \quad \|\Phi \|_2^2 < \infty.$$
Then, for any $0\leq s\leq t\leq T$, one has the following stochastic representation, $\mathbb{P}^{m}$-a.s.,
\begin{equation}
\label{Ito:u}
\begin{split}
u\left( t,X_{t}\right) -u\left( s,X_{s}\right)
=&\sum\limits_{i}\int\limits_{s}^{t}\partial _{i}u\left( r,X_{r}\right)
dW_{r}^{i}+\int_s^t\int_{\mathbb R^d} u_r(X_{r-}+z)-u_r(X_{r-})\tilde N(dz,dr)\\
&-\int\limits_{s}^{t}f_r\left(X_{r}\right) dr-\int\limits_{s}^{t}h_r\left( X_{r}\right) \cdot \overleftarrow{dB}_r.
\end{split}
\end{equation}
\end{theorem}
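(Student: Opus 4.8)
The strategy is to prove the representation first for smooth data and then pass to the limit. Concretely, I would proceed by a regularisation/approximation argument in three stages.

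\textbf{Step 1: regularisation of the equation.} Since $u\in\mathcal H_T$ is only a weak solution, I first mollify in the space variable (convolution with an approximate identity $\rho_\varepsilon$) and, if needed, regularise in time, so that $u^\varepsilon=u*\rho_\varepsilon$ solves a similar equation with smoother coefficients $f^\varepsilon$, $h^\varepsilon$ and smoother terminal datum $\Phi^\varepsilon$, with $f^\varepsilon\to f$, $h^\varepsilon\to h$ in $L^2([0,T]\times\mathbb R^d)$ (in the appropriate $\mathbb E\int_0^T\|\cdot\|_2^2\,dt$ sense) and $\Phi^\varepsilon\to\Phi$ in $L^2$. For the mollified solution $u^\varepsilon$, which is smooth enough in $x$ and continuous in $t$, the classical It\^o formula applies: one writes the It\^o--Kunita formula for the composition $u^\varepsilon(t,X_t)$ along the Markov process $X$ whose generator is $\mathcal A$, conditionally on the backward Brownian motion $B$. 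Because $X_t=W_t+\int\!\!\int z\,N$ decomposes into a Brownian part and a (compensated) jump part with L\'evy measure $v(dz)=|z|^{-d-\alpha}dz$, the It\^o formula produces exactly: a $dW$ stochastic integral $\sum_i\int\partial_i u^\varepsilon\,dW^i$; a compensated Poisson integral $\int\!\!\int (u^\varepsilon_r(X_{r-}+z)-u^\varepsilon_r(X_{r-}))\tilde N(dz,dr)$; the generator term $\int \mathcal A u^\varepsilon_r(X_r)\,dr$; the time-derivative term $\int\partial_r u^\varepsilon_r(X_r)\,dr$; and the backward It\^o term $-\int h^\varepsilon_r(X_r)\cdot\overleftarrow{dB}_r$ coming from the $\overleftarrow{dB}$-martingale part of $u^\varepsilon$. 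Using the equation $\partial_r u^\varepsilon+\mathcal A u^\varepsilon+f^\varepsilon=0$ (in the mollified form, which holds a.e.) the generator term and the time-derivative term combine to $-\int f^\varepsilon_r(X_r)\,dr$, yielding \eqref{Ito:u} with the superscript $\varepsilon$.

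\textbf{Step 2: passage to the limit.} I then let $\varepsilon\to 0$ in the $\varepsilon$-version of \eqref{Ito:u}, working in $L^2(\mathbb P^m)$ for each fixed $0\le s\le t\le T$ (and then taking a version continuous in $(s,t)$). Because the Lebesgue measure $m=dx$ is invariant for the semigroup $P_t$ (established in the excerpt), one has the crucial identity $\mathbb E^m\int_s^t |g_r(X_r)|^2\,dr=\int_s^t\|g_r\|_2^2\,dr$, which turns $L^2([0,T]\times\mathbb R^d)$-convergence of the coefficients into $L^2(\mathbb P^m\times dr)$-convergence of the corresponding integrands $f^\varepsilon(X_\cdot)\to f(X_\cdot)$, $\partial_i u^\varepsilon(X_\cdot)\to\partial_i u(X_\cdot)$. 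For the jump term one needs the analogous isometry for the compensated Poisson integral, namely $\mathbb E^m\int_s^t\!\!\int_{\mathbb R^d}|u^\varepsilon_r(X_{r-}+z)-u^\varepsilon_r(X_{r-})|^2 v(dz)\,dr$ controlled by $\int_s^t\mathcal E^2(u^\varepsilon_r-u_r)\,dr\to 0$, i.e. one uses exactly that $\mathcal E^2$ is the Dirichlet form attached to the jump part and that $u^\varepsilon\to u$ in $L^2([0,T];H^1)$ (hence in the form $\mathcal E$). The backward stochastic integral term converges by the backward It\^o isometry together with $h^\varepsilon\to h$ and the same invariance identity, and the left-hand side $u^\varepsilon(t,X_t)-u^\varepsilon(s,X_s)$ converges in $L^2(\mathbb P^m)$ because $u\in\widetilde F$ so $u^\varepsilon_t\to u_t$ in $L^2(\mathbb R^d)$ uniformly in $t$ and hence $u^\varepsilon_t(X_t)\to u_t(X_t)$ in $L^2(\mathbb P^m)$. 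Collecting all limits gives \eqref{Ito:u}.

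\textbf{Main obstacle.} The delicate point is the jump term: one must make sense of, and control, $\int_s^t\!\!\int (u_r(X_{r-}+z)-u_r(X_{r-}))\tilde N(dz,dr)$ for a function $u_r$ that is only in $H^1(\mathbb R^d)$ and only defined up to a.e.-equivalence, and show that the mollified increments converge to it. This is precisely where the structure of the fractional-type Dirichlet form $\mathcal E^2$ is essential: the natural bound is the Lévy--Khintchine/Dirichlet-form identity $\mathbb E^m\int_0^T\!\!\int_{\mathbb R^d}|w_r(X_{r-}+z)-w_r(X_{r-})|^2\,v(dz)\,dr=2\int_0^T\mathcal E^2(w_r,w_r)\,dr$, valid for $w\in L^2([0,T];H^1)$, which controls the jump integrand by the $H^{\alpha/2}$-seminorm and hence by the $H^1$-norm, uniformly in the mollification. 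Verifying this identity (for smooth $w$, then extending by density) and checking that the quasi-continuity statements and the "only inaccessible jumps'' property of $r\mapsto u_r(X_r)$ are consistent with the c\`adl\`ag integrand $u_r(X_{r-})$ is the technical heart of the argument; everything else is a routine approximation using the invariance of $m$.
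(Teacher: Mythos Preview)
Your route differs from the paper's, and Step~1 hides the real difficulty.

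The paper does not mollify. By linearity it splits $u=u_1+u_2$, where $u_1$ solves the equation with $h\equiv 0$ and $u_2$ the one with $f\equiv 0$, $\Phi=0$. For $u_1$ there is no backward stochastic integral, so for each fixed $\omega$ the equation is an ordinary (random-coefficient) PDE and the representation of $u_1(t,X_t)$ along $X$ follows from the forward It\^o formula. For $u_2$ the paper first takes $h$ elementary, $h_t(\omega,x)=\chi_A(\omega)\chi_{[r_1,r_2)}(t)e(x)$ with $e\in\mathcal D(\mathcal A^{3/2})\cap L^\infty$, and then \emph{discretises the backward integral in time}: setting $h^n_t=2^n(B_{t^n_i}-B_{t^n_{i+1}})h_{t^n_{i+1}}$ on $(t^n_i,t^n_{i+1}]$, the approximants $v^n$ solve equations with $h^n$ appearing as an ordinary Lebesgue drift, so one is back in case~(a) and the representation of $v^n(t,X_t)$ is immediate. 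A convergence lemma from \cite{Denis2} gives $\sup_t\mathcal E(v^n_t-u_{2,t})\to 0$, which forces each term to converge; in particular $-\int_t^T h^n_s(X_s)\,ds\to -\int_t^T h_s(X_s)\cdot\overleftarrow{dB}_s$. General $h$ then follows by density of elementary processes.

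Your Step~1, by contrast, is not routine. Mollifying in $x$ makes $u^\varepsilon$ smooth in $x$ but does \emph{not} make $t\mapsto u^\varepsilon_t(x)$ differentiable: the $\overleftarrow{dB}$ term survives, so for each $x$ this map is still a backward semimartingale of infinite variation. Consequently ``conditioning on $B$ and applying the classical It\^o formula'' fails: once $\omega$ is frozen, $t\mapsto u^\varepsilon_t(\omega,x)$ is a fixed continuous path that is neither $C^1$ nor of bounded variation, so the classical It\^o formula for $u^\varepsilon(t,X_t)$ is unavailable; and the standard It\^o--Kunita formula requires the field to be a \emph{forward} semimartingale in $t$, which it is not. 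What Step~1 actually needs is a doubly stochastic It\^o formula of Pardoux--Peng type, extended to the case where the forward process $X$ has jumps. Establishing that is essentially the content of the theorem, and it is exactly what the paper's time-discretisation of $\overleftarrow{dB}$ is designed to bypass. Your Step~2, including the control of the jump term through the isometry $\mathbb E^m\!\int\!\!\int|w_r(X_{r-}+z)-w_r(X_{r-})|^2\,v(dz)\,dr=2\int\mathcal E^2(w_r)\,dr$, is correct and is in fact the easier half of the argument; the obstacle you flag is not the main one.
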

Noting that $ \mathcal{F}_T$ and $\mathcal{ F}^B_{0,T}$ are independent under $ \mathbb{P}\otimes\mathbb{ P}^m$ and therefore in the above formula the stochastic integrals with respect to $ dW_t$ act independently of $\mathcal{ F}_{0,T}^B$  and similarly the integral with respect to
$\overleftarrow{dB}_t$ acts independently of $ \mathcal{F}_T$ .
\begin{proof} The proof will be similar to that of Proposition 11 in \cite{Denis2}. Thus we only give a sketch of it. The solution can be decomposed into two terms each corresponding to one of the coefficients $f$ and $h$. It is enough to treat them separately.\\
a) In the case where $h=0$, one can use It\^o's formula to get the desired result. Thus we omit the proof.\\
b) In the case where $f=0$, we first establish the formula for the elementary process,
$$h_t(\omega,x)=\chi_{A}(\omega)\chi_{[r_1,r_2)}(t)e(x), $$
where $0\le r_1\le r_2 \le T$, $A \in \mathcal F^B_{r_2,T}$ and $e \in \mathcal D(\mathcal A^{\frac{3}{2}})\cap L^{\infty}$. In this case, let 
$$v^n_t:=\int_{t}^TP_{s-t}h^n_sd\overleftarrow{B}_s,$$
where $h^n_t=2^n(B_{t^n_i}-B_{t^n_{i+1}})h_{t^n_{i+1}}$ for $t \in (t^n_{i},t^n_{i+1}]$ and $t^n_i=\frac{i}{2^n}T$. Then, Lemma 10 in \cite{Denis2} implies that 
$$\lim_{n}E\sup_{0\le t\le T}\mathcal E(v^n_t-u_t)=0.$$
We may have the following relation:
\begin{equation*}
v^n_t(X_t)=-\int^T_{t}h^n(s,X_s)ds+\int^T_t\nabla v^n_s(X_s)dW_s+\int_t^Tv^n_s(X_{s-}+z)-v^n_s(X_{s-})\tilde N(dz,ds).
\end{equation*}
Letting $n \to \infty$, one has
\begin{equation*}
-\int^T_{t}h^n(s,X_s)ds \longrightarrow -\int\limits_{s}^{t}h_r\left( X_{r}\right) \cdot \overleftarrow{dB}_r 
\end{equation*}
\begin{equation*}
\int^T_t\nabla v^n_s(X_s)dW_s \longrightarrow \int^T_t\nabla u_s(X_s)dW_s
\end{equation*}
\begin{equation*}
\int_t^T\int_{\mathbb R^d}v^n_s(X_{s-}+z)-v^n_s(X_{s-})\tilde N(dz,ds) \longrightarrow \int_t^T\int_{\mathbb R^d}u_s(X_{s-}+z)-u_s(X_{s-})\tilde N(dz,ds)
\end{equation*}
Thus we proof the formula for the elementary processes. The general cases can be proved by approximation.
%c) In the case where $f=h=0$, if there exists $l\in L^2([0,T]\times \mathbb R^d)$ such that
%\[\text{div} g=l\] 
%holds in the weak sense, then, according to  Lemma 3.1 in \cite{Stoica}, we have
%\[\int_s^tg_r*dW_r=\int_s^tf(r,X_r)dr.\]
%By Lemma 3.5 in \cite{Stoica}, we have a function $k \in L^2([0,T]\times \mathbb R^d$ such that $k(t,\cdot) \in \mathbb H^1,t-\text{a.s.}$ and $\text{div} g=\Delta k-k$ in the weak sense. Thus the equation can be rewritten as 
%\[du_t+\mathcal Au_t+\Delta k_t-k_t=0.\]
%For $k$ smooth enough, as in the case a), one can show that
%\begin{equation*}
%\begin{split}
%u\left( t,X_{t}\right) -u\left( s,X_{s}\right)
%=&\sum\limits_{i}\int\limits_{s}^{t}\partial _{i}u\left( r,X_{r}\right)
%dW_{r}^{i}+\int_s^t\int_{\mathbb R^d} u_r(X_{r-}+z)-u_r(X_{r-})\tilde N(dz,dr)\\
%&-\int\limits_{s}^{t}(\Delta k-k)\left(r,X_{r}\right) dr
%\end{split}
%\end{equation*}
%Since
%\[\int_s^t\nabla k_r*dW_r=\int_s^t\Delta k(r,X_r)dr,\]
%we can rewrite it as:
%\begin{equation*}
%\label{Ito:u,k}
%\begin{split}
%u\left( t,X_{t}\right) -u\left( s,X_{s}\right)
%=&\sum\limits_{i}\int\limits_{s}^{t}\partial _{i}u\left( r,X_{r}\right)
%dW_{r}^{i}+\int_s^t\int_{\mathbb R^d} u_r(X_{r-}+z)-u_r(X_{r-})\tilde N(dz,dr)\\
%&+\int\limits_{s}^{t}k\left(r,X_{r}\right) dr-\int_s^t\nabla k_r*dW_r
%\end{split}
%\end{equation*}
%Using approaximation method, we shall have that (\ref{Ito:u,k}) holds for  $k \in L^2([0,T]\times \mathbb R^d,k(t,\cdot) \in \mathbb H^1,t-\text{a.s.}$.Moreover, 
%\[\int_s^tg_r*dW_r=\int_s^t\nabla k_r*dW_r-\int_s^t k(r,X_r)dr.\]
%Thus we finish the proof.
\end{proof}

In particular the process $(u_t (X_t))_{t\in[0,T]} $ admits a c\`adl\`ag version which we usually denote by $Y=(Y_t)_{t \in [0,T]}$ and we introduce the notation $ Z_t = \nabla u_t (X_t)$ and $U_t(z)=u_t(X_{t-}+z)-u_t(X_{t-})$. As  a consequence of this theorem,  with the application of It\^o's formula and BDG's inequality, we shall have the following result.

\begin{corollary}
\label{FK2}
Under the hypothesis of the preceding theorem one has the following stochastic representation for $ u^2$, $\mathbb{P}\otimes \mathbb{P}^{m}\textbf{-}a.e.$, for any $0\leq t\leq T$,
\begin{equation}
\label{Ito:2}
\begin{split}
u_t^2\left(X_{t}\right) -\Phi^2\big(X_{T}\big)& = 2 \int_t^T \big[ u_s f_s (X_s) -|Z_s|^2-\int U^2_s(z)v(dz) + \frac{1}{2}|h_s|^2( X_s) \, \big] \, ds \\
&- 2 \sum\limits_{i}\int\limits_{t}^{T} \big(u_r \partial _{i}u_r \big) \left(X_{r}\right)
dW_{r}^{i}-\int_t^T \int_{\mathbb R^d} u^2_t(X_{t-}+z)-u^2_t(X_{t-})\tilde N(dz,dt) \\
 &+ 2 \int_{t}^{T} \big(u_r h_r \big) \left(X_{r}\right) \cdot \overleftarrow{dB}_r.\\
\end{split}
\end{equation}

\begin{equation}
\label{estimationYZ}
\mathbb{E}\mathbb{E}^m \, \big( \sup_{t \leq s \leq T} \left|u_s\right|^2 \big) + \mathbb{E }\Big[
\int_t^T \mathcal E(u_s) \, ds \Big] \leq c  \, \Big[ \|\phi \|_2^2 + \mathbb{ E} \int_t^T \big[ \|f_s\|_2^2 + \|h_s\|_2^2 \, \big]\, ds  \Big],
\end{equation}
for each $ t\in [0,T]$.
\end{corollary}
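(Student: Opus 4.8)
The plan is to apply the Itô formula for càdlàg semimartingales to $u_t(X_t)$, whose representation is given by Theorem \ref{FK}. Writing $Y_t = u_t(X_t)$, $Z_t = \nabla u_t(X_t)$ and $U_t(z) = u_t(X_{t-}+z) - u_t(X_{t-})$ as in the discussion following the theorem, equation \eqref{Ito:u} expresses $Y$ as the sum of a continuous finite-variation part $-\int f_r(X_r)\,dr$, a forward Brownian martingale $\int Z_r\,dW_r$, a backward Brownian integral $-\int h_r(X_r)\cdot\overleftarrow{dB}_r$, and a compensated Poisson integral $\int\int U_r(z)\,\tilde N(dz,dr)$. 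First I would apply the Itô formula for $x\mapsto x^2$ to $Y$; the quadratic-variation contributions come from the three martingale terms. The forward Brownian part contributes $\int |Z_s|^2\,ds$, the backward Brownian part contributes $\int |h_s|^2(X_s)\,ds$, and the jump part contributes, after writing the square of the jump and recompensating, the term $\int\int U_s^2(z)\,v(dz)\,ds$ in the drift together with the compensated integral $\int\int (u_s^2(X_{s-}+z)-u_s^2(X_{s-}))\,\tilde N(dz,ds)$. Collecting the cross term $2 u_s f_s(X_s)$ from the finite-variation part and integrating from $t$ to $T$ with terminal value $\Phi^2(X_T) = u_T^2(X_T)$ yields \eqref{Ito:2}.

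The one delicate point in deriving \eqref{Ito:2} is the interaction between the forward integral $\int Z\,dW$ and the backward integral $\int h\cdot\overleftarrow{dB}$: these live on independent probability spaces (as noted after Theorem \ref{FK}, $\mathcal F_T$ and $\mathcal F^B_{0,T}$ are independent under $\mathbb P\otimes\mathbb P^m$), so their mutual quadratic covariation vanishes and no cross term appears. One should also check the jump part carefully: $\Delta(Y^2)_s = 2 Y_{s-}\Delta Y_s + (\Delta Y_s)^2$, and since $X$ (hence $Y$) has only the jumps coming from $N$, summing $(\Delta Y_s)^2 = U_s^2(z)$ over the jumps and compensating produces exactly the $\int U_s^2(z)\,v(dz)$ drift correction. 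This is essentially a bookkeeping computation once the semimartingale decomposition \eqref{Ito:u} is in hand.

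For the energy estimate \eqref{estimationYZ}, I would take expectations $\mathbb E\mathbb E^m$ in \eqref{Ito:2} at a generic time $t$. The two compensated stochastic integrals (against $dW$ and against $\tilde N$) are martingales with zero mean, and the backward integral $\int (u_r h_r)(X_r)\cdot\overleftarrow{dB}_r$ also has zero expectation; using that $m$ is an invariant measure for $P_t$, one identifies $\mathbb E^m|u_s(X_s)|^2 = \|u_s\|_2^2$ and $\mathbb E^m(2|Z_s|^2 + 2\int U_s^2(z)\,v(dz)) = 4\mathcal E(u_s,u_s)$, up to the precise normalization constants in the definition of $\mathcal E$. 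This gives
\[
\|u_t\|_2^2 + c\,\mathbb E\int_t^T \mathcal E(u_s)\,ds \le \|\Phi\|_2^2 + \mathbb E\int_t^T\big(2u_s f_s(X_s) + |h_s|^2(X_s)\big)\,ds,
\]
and the cross term is absorbed by Young's inequality, $2u_s f_s \le \|u_s\|_2^2 + \|f_s\|_2^2$, followed by Gronwall's lemma in $t$. For the supremum bound one returns to \eqref{Ito:2} before taking expectations, applies the Burkholder–Davis–Gundy inequality to the two martingale terms and to the backward integral, and bounds the resulting quadratic variations by $\sup_s|u_s|^2$ times an integrated term, absorbing the $\sup$ into the left side with a small constant. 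The main obstacle is controlling the jump martingale term in the BDG step — its bracket is $\int\int (u_s^2(X_{s-}+z)-u_s^2(X_{s-}))^2\,N(dz,ds)$, which must be dominated (using $a^2-b^2 = (a-b)(a+b)$, boundedness of $u$ along the trajectory coming from càdlàg-ness, and the already-established $L^2$ bound on $\int\int U_s^2(z)\,v(dz)\,ds$) by something of the form $\sup_s|u_s|^2 \cdot \int\mathcal E(u_s)\,ds$; this is exactly where the estimate on the Lévy jump part that the introduction flags as the core difficulty enters.
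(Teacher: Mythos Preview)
Your proposal is correct and follows essentially the same route as the paper: the paper derives \eqref{Ito:2} by invoking the doubly stochastic It\^o formula of Pardoux--Peng (Lemma~1.3 in \cite{PardouxPeng94}) applied to the representation \eqref{Ito:u}, and then obtains \eqref{estimationYZ} by what it calls ``the standard calculations of BSDE involving Young's inequality, B-D-G inequality and Gronwall's lemma''---exactly the steps you outline. Your sketch is in fact more explicit than the paper's proof; the only minor imprecision is that the bracket of the jump martingale is an integral against $N(dz,ds)$ rather than $v(dz)\,ds$, so the passage to $\int\!\int U_s^2(z)\,v(dz)\,ds$ (and hence to $\int \mathcal E(u_s)\,ds$) happens only after taking the expectation $\mathbb E\mathbb E^m$ in the BDG step, not before.
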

\begin{proof}Firstly, we may represent the solution in the form
\begin{equation*}
\begin{split}
u_t\left(X_{t}\right) -u_s\left(X_{s}\right)
&=\sum\limits_{i}\int\limits_{s}^{t}\partial _{i}u_r\left(X_{r} \right)
dW_{r}^{i}+\int_s^t\int_{\mathbb R^d}u_r(X_{r-}+z)-u_r(X_{r-})\tilde N(dz,dr)\\
&-\int\limits_{s}^{t} f_r (X_r)  \,  dr  -\int\limits_{s}^{t}h_r (X_{r}) \cdot \overleftarrow{dB}_r.
\end{split}
\end{equation*}
By similar proof in Lemma 1.3 of \cite{PardouxPeng94}, it follows that
\begin{equation*}
\begin{split}
u_t^2\left(X_{t}\right) -u_s^2\left(X_{s}\right) =& - 2 \int_s^t \big[ u_r f_r  (X_r) - |\nabla u_r |^2 (X_r) -|h_r|^2 (X_r) \, \big] \, dr- 2 \int_{s}^{t} \big(u_r h_r \big) \left(X_{r}\right) \cdot \overleftarrow{dB}_r \\
&+\int_s^t\int_{\mathbb R^d}(u_r(X_{r-}+z)-u_r(X_{r-}))^2v(dz)dr+ 2 \sum\limits_{i}\int\limits_{s}^{t} \big(u_r \partial _{i}u_r \big) \left(X_{r}\right)
dW_{r}^{i} \\
&+\int_s^t\int_{\mathbb R^d}u^2_r(X_{r-}+z)-u^2_r(X_{r-})\tilde N(dz,dr).\\
\end{split}
\end{equation*}
%On the other hand, by Lemma 3.1 of \cite{Stoica} one has
%$$
%- 2 \int_s^t div (u_r \, g_r) (X_r) \, dr = \int_s^t u_r \, g_r  * dW_r,
%$$
%so that the preceding relation immediately leads to the relation \eqref{Ito:2}.
Then the standard calculations of BSDE involving Young's inequality, B-D-G inequality and Gronwall's lemma give the estimate \eqref{estimationYZ}.  
%Finally to obtain the result with general $g$ one  proceeds by approximation.
\end{proof}
From Corollary \ref{FK2}, one can easily obtain the following.
\begin{lemma}
	\label{coefficienthn}
	Let $h$ and $h^n$, for $n \in \mathbb{N}$, be $L^2(\mathbb{R}^d; \mathbb{R}^{d^{1}})$-valued predictable processes on $ [0,T]$ with respect to
	$(\mathcal{F}_{t,T}^B)_{t\geq 0}$ and such that
	$$
	\mathbb{E} \int_0^T \|h_t\|_2^2 dt < \infty ,\quad \mathbb{E} \int_0^T \|h_t^n\|_2^2 dt < \infty \quad \mbox{and} \;
	\lim_{n\to \infty} \mathbb{E} \, \int_0^T  \| h^n_t - h_t \|_2^2 dt = 0.
	$$
	Let  $(u^n)_{n\in \mathbb{N}}$ be the solutions
	of the equations
	\begin{equation*}
	d u_t^{n} +  [ \mathcal {A} u_t^{n} - n u_t^{n}]\, dt  + h_t^n \cdot \overleftarrow{dB}_t   = 0,
	\end{equation*}
	with final condition $u_T^n = 0$, for each $n\in \mathbb{N}$. Then, one has
	$ \displaystyle  \lim_{n\to \infty} \int_0^T\mathcal E(u_t^n) dt = 0$.
\end{lemma}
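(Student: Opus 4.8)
The plan is to reduce Lemma \ref{coefficienthn} to the deterministic Corollary \ref{coefficientfn} by conditioning on the Brownian motion $B$. The key observation is that under $\mathbb{P}\otimes\mathbb{P}^m$ the noise $B$ is independent of the Markov process $X$, so for a fixed path of $B$ the stochastic equation $du_t^n + [\mathcal{A}u_t^n - nu_t^n]\,dt + h_t^n\cdot\overleftarrow{dB}_t = 0$ becomes, after the usual Feynman--Kac-type substitution, a deterministic parabolic equation driven by a source term built from $h^n$. More precisely, I would first apply Corollary \ref{FK2} to the difference $u^n - u^k$ (both solve the penalized equation with data $h^n$ resp. $h^k$ and zero terminal value), which gives
\begin{equation*}
\mathbb{E}\Big[\int_0^T \mathcal{E}(u_t^n - u_t^k)\,dt\Big] \leq c\,\mathbb{E}\int_0^T \|h_t^n - h_t^k\|_2^2\,dt.
\end{equation*}
Since $(h^n)$ is Cauchy in $L^2(\Omega\times[0,T]; L^2(\mathbb{R}^d;\mathbb{R}^{d^1}))$ by hypothesis, this shows $(u^n)$ is Cauchy in the norm $\big(\mathbb{E}\int_0^T \mathcal{E}(u_t)\,dt\big)^{1/2}$, hence converges to some limit; it remains only to identify that limit as $0$.

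To identify the limit, I would use the estimate \eqref{estimationYZ} together with a comparison against the equation with the \emph{fixed} coefficient $h$. Let $w^n$ solve $dw_t^n + [\mathcal{A}w_t^n - nw_t^n]\,dt + h_t\cdot\overleftarrow{dB}_t = 0$ with $w_T^n = 0$. By the difference estimate just mentioned, $\mathbb{E}\int_0^T \mathcal{E}(u_t^n - w_t^n)\,dt \to 0$, so it suffices to prove $\mathbb{E}\int_0^T\mathcal{E}(w_t^n)\,dt \to 0$. For this I would condition on $\mathcal{F}^B_{0,T}$: for $\mathbb{P}$-almost every fixed realization of $B$, $w^n(\cdot)$ is the solution of a deterministic equation $(\partial_t + \mathcal{A})w^n - nw^n + g^n = 0$ with $w_T^n = 0$, where the source $g^n$ comes from integrating the (fixed) $h$-term against the backward Brownian increments in the standard way, and crucially $g^n \to g$ (independent of $n$) in $L^2([0,T]\times\mathbb{R}^d)$ for a.e.\ fixed $B$ — this is exactly the content of the convergences recorded in the proof of Theorem \ref{FK} (the $h^n_t = 2^n(B_{t^n_i} - B_{t^n_{i+1}})h_{t^n_{i+1}}$ approximation). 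Applying Corollary \ref{coefficientfn} pathwise then yields $\int_0^T \mathcal{E}(w_t^n)\,dt \to 0$ for a.e.\ fixed $B$, and dominated convergence (using the a priori bound from \eqref{estimationYZ}, which is uniform in $n$) upgrades this to convergence of the expectation.

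The main obstacle I anticipate is making the pathwise reduction rigorous: one must show that, after conditioning on $B$, the penalized stochastic equation genuinely becomes a deterministic equation of the type handled by Corollary \ref{coefficientfn}, with a source term that converges in $L^2([0,T]\times\mathbb{R}^d)$ for almost every fixed Brownian path. The cleanest route is probably to avoid conditioning altogether and instead argue directly: combine the Cauchy property of $(u^n)$ in the energy norm with a weak-form identification of the limit — testing \eqref{weak:SPDE} against $\varphi\in\mathcal{D}_T$, the terms $\int_t^T(h_s^n,\varphi_s)\cdot\overleftarrow{dB}_s$ converge while the penalization $-n\int_t^T(u_s^n,\varphi_s)\,ds$ must then stay bounded, forcing $\int_0^T\mathcal{E}(u_t^n)\,dt\to 0$ via the energy identity \eqref{estimationYZ} applied with the observation that $\mathbb{E}\|u_0^n\|_2^2$ and the energy are controlled by $n^{-1}$-type bounds inherited from Lemma \ref{coefficientf}. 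Either way, the heart of the matter is transferring the deterministic decay rate of Corollary \ref{coefficientfn} through the stochastic integral, and the a priori estimate \eqref{estimationYZ} is the tool that controls the error committed in replacing $h^n$ by $h$.
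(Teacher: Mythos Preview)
Your overall plan---reduce to the case $h^n\equiv h$ and then show the fixed-$h$ solutions satisfy $\mathbb{E}\int_0^T\mathcal{E}(w_t^n)\,dt\to 0$---is sound, and the reduction step is correct: since $u^n-w^n$ solves
\[
d(u^n-w^n)+[\mathcal{A}(u^n-w^n)-n(u^n-w^n)]\,dt+(h^n-h)\cdot\overleftarrow{dB}=0,
\]
the energy identity gives $\mathbb{E}\int_0^T\mathcal{E}(u_t^n-w_t^n)\,dt\le \tfrac12\,\mathbb{E}\int_0^T\|h_t^n-h_t\|_2^2\,dt\to 0$. However, your very first displayed estimate is wrong: $u^n-u^k$ does \emph{not} solve a penalized equation with a single parameter, because the drift contains $-nu^n+ku^k$, which is not a multiple of $u^n-u^k$; hence there is no bound of $\mathcal{E}(u^n-u^k)$ by $\|h^n-h^k\|_2^2$ alone. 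This step is in any case unnecessary once you have the reduction to $w^n$.

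The real gap is your treatment of $w^n$. Conditioning on $\mathcal{F}^B_{0,T}$ does \emph{not} turn the backward stochastic integral $h\cdot\overleftarrow{dB}$ into an $L^2([0,T]\times\mathbb{R}^d)$ source $g^n$ to which Corollary \ref{coefficientfn} applies. For a fixed Brownian path, $\overleftarrow{dB}$ acts as a time-white-noise; the discretizations $2^n(B_{t^n_i}-B_{t^n_{i+1}})h_{t^n_{i+1}}$ from the proof of Theorem \ref{FK} do \emph{not} converge in $L^2([0,T]\times\mathbb{R}^d)$ pathwise (their $L^2_t$-norms blow up), so the hypothesis of Corollary \ref{coefficientfn} fails. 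The paper's route avoids this entirely: one applies the It\^o formula of Corollary \ref{FK2} directly to the penalized equation and takes $\mathbb{E}\otimes\mathbb{E}^m$. The backward integral then contributes $\|h_t\|_2^2$ via It\^o isometry, exactly as $|f_t|^2$ does in the deterministic case, and the \emph{same} computation that proves Lemma \ref{coefficientf} yields
\[
\mathbb{E}\int_0^T\mathcal{E}(w_t^n)\,dt\;\le\; c\Big[\tfrac{1}{n}\,\mathbb{E}\int_0^T\|h_t\|_2^2\,dt+\mathbb{E}\int_0^T e^{-2n(T-t)}\|h_t\|_2^2\,dt\Big]\longrightarrow 0.
\]
So the missing idea is: take expectation \emph{before} invoking the deterministic argument, rather than trying to freeze $B$ pathwise.
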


Thanks to estimate \eqref{estimationYZ}, we can do a similar proof as that of Proposition 1 in \cite{MS10} and get the following proposition which concerns the 
quasicontinuity of the solution of an SPDE.  
%%In the deterministic case it was proven in \cite{Stoica} that the solution of a quasilinear equation has a quasicontinuous version. Here we shall prove a similar property for the solution of an SPDE as is stated in the next proposition.
\begin{proposition}
\label{quasicontinuit?EDPS}
Under the hypothesis of Theorem \ref{FK}, there exists  a function $ \bar{u} \, : \, [0,T]\times  \Omega \times \mathbb{R}^d \longrightarrow \mathbb{R}$
  which is a  quasicontinuous version of $u$, in the sense that  for each $\epsilon >0,$ there exits a predictable
  random set $D^{\epsilon} \subset
[0,T] \times \Omega \times \mathbb{R}^d $
such that $\mathbb{P}$\textbf{-}a.s.  the section $ D_{\omega}^{\epsilon}$ is open and $\bar{ u} \left(\cdot, \omega, \cdot \right) $ is continuous on  its  complement  $\left(D_{\omega}^{\epsilon}\right)^c$ and
$$
\mathbb{P}\otimes \mathbb{P}^m \, \left( (\omega, \omega') \, \big| \; \exists t \in [0,T]\; s.t. \;
\left(t, \omega, X_t (\omega') \right) \in
D^{\epsilon} \, \right) \leq \epsilon .
$$
In particular  the process  $\big(\overline{u}_t (X_t) \big)_{t \in[0,T]}$ has c\`adl\`ag trajectories,
$\mathbb{P}\otimes \mathbb{P}^m\textbf{-}a.s.$
\end{proposition}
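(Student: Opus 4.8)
The plan is to follow the proof of Proposition 1 in \cite{MS10}: approximate the data by smooth ones, use the energy estimate \eqref{estimationYZ} to control the approximation along the trajectories of $X$, and then run a Borel--Cantelli argument to produce the exceptional open sets. The only genuinely new feature is that the It\^o formula and the energy identity carry the jump term of $X$; but these have already been established in Theorem \ref{FK} and Corollary \ref{FK2}, so \eqref{estimationYZ} is available exactly in the form needed below.

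\emph{Step 1 (smooth approximation).} Choose $\Phi^p\in\mathcal{C}_c^{\infty}(\mathbb{R}^d)$ and $(\mathcal F^B_{t,T})$-predictable processes $f^p,h^p$ with trajectories in $\mathcal{D}_T$ such that
\[\|\Phi-\Phi^p\|_2^2+\mathbb{E}\int_0^T\big(\|f_s-f^p_s\|_2^2+\|h_s-h^p_s\|_2^2\big)\,ds\le 4^{-p},\]
and put $u^p:=\mathcal{U}(\Phi^p,f^p,h^p)$. For such smooth data the linear equation admits, for $\mathbb{P}$-a.e.\ $\omega$, a solution that is jointly continuous in $(t,x)$ on $[0,T]\times\mathbb{R}^d$ (standard parabolic regularity for $\mathcal A$, applied pathwise after the random shift removing the backward integral); in particular each $u^p$ is quasicontinuous and $(u^p_t(X_t))_{t\in[0,T]}$ has continuous trajectories, $\mathbb{P}\otimes\mathbb{P}^m$-a.s. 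Moreover, since $X_t$ has law $m=dx$ under $\mathbb{P}^m$, applying \eqref{estimationYZ} to the difference $u^p-u$ gives $\mathbb{E}\mathbb{E}^m\int_0^T|u^p_t(X_t)-u_t(X_t)|^2\,dt\le cT\,4^{-p}\to 0$, so the continuous processes $u^p_\cdot(X_\cdot)$ converge to the c\`adl\`ag version of $u_\cdot(X_\cdot)$ in $L^2(\mathbb{P}\otimes\mathbb{P}^m\otimes dt)$, and $u^p\to u$ $dt\,dx$-a.e.\ (along a subsequence).

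\emph{Step 2 (uniform convergence along $X$, and the exceptional sets).} By linearity $u^{p+1}-u^p=\mathcal{U}(\Phi^{p+1}-\Phi^p,f^{p+1}-f^p,h^{p+1}-h^p)$, and by Theorem \ref{FK} its composition with $X$ is $u^{p+1}_\cdot(X_\cdot)-u^p_\cdot(X_\cdot)$; applying \eqref{estimationYZ} to this difference yields
\[\mathbb{E}\mathbb{E}^m\Big(\sup_{0\le t\le T}\big|u^{p+1}_t(X_t)-u^p_t(X_t)\big|^2\Big)\le c\,4^{-p},\]
so by Chebyshev's inequality $\mathbb{P}\otimes\mathbb{P}^m\big(\sup_t|u^{p+1}_t(X_t)-u^p_t(X_t)|\ge 2^{-p/4}\big)\le c\,2^{-p}$, which is summable. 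Since each $u^{p+1}-u^p$ is pathwise continuous on $[0,T]\times\mathbb{R}^d$, the random set $D_p:=\{(t,\omega,x):|u^{p+1}_t(\omega,x)-u^p_t(\omega,x)|>2^{-p/4}\}$ is predictable with $\mathbb{P}$-a.s.\ open sections and satisfies $\mathbb{P}\otimes\mathbb{P}^m(\exists\,t:(t,\omega,X_t(\omega'))\in D_p)\le c\,2^{-p}$. Given $\epsilon>0$, choose $p_0$ with $c\sum_{p\ge p_0}2^{-p}<\epsilon$ and set $D^\epsilon:=\bigcup_{p\ge p_0}D_p$. Then $D^\epsilon$ is predictable with $\mathbb{P}$-a.s.\ open sections, $\mathbb{P}\otimes\mathbb{P}^m(\exists\,t:(t,\omega,X_t)\in D^\epsilon)\le\epsilon$, and on $(D^\epsilon_\omega)^c$ one has $\sum_{p\ge p_0}|u^{p+1}_t(\omega,x)-u^p_t(\omega,x)|\le\sum_{p\ge p_0}2^{-p/4}<\infty$ uniformly; hence the continuous functions $u^p(\omega,\cdot,\cdot)$ converge uniformly on $(D^\epsilon_\omega)^c$, and their limit $\bar u$ is continuous there. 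Letting $\epsilon\downarrow 0$ this defines a version $\bar u$ of $u$ (it equals $u$ $dt\,dx$-a.e.\ by Step 1) which is quasicontinuous in the stated sense; and $(\bar u_t(X_t))_t$ is c\`adl\`ag, being, by Borel--Cantelli, the $\mathbb{P}\otimes\mathbb{P}^m$-a.s.\ uniform limit of the continuous processes $u^p_\cdot(X_\cdot)$.

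\emph{Main obstacle.} The delicate point is Step 1--2: one must know that the smooth-data solution $u^p$ is a genuinely jointly continuous function of $(t,x)$ for a.e.\ fixed Brownian path, and one must have \eqref{estimationYZ} for the differences $u^p-u^q$ with the jump contribution $\int U^2_s(z)\,v(dz)$ of Corollary \ref{FK2} properly absorbed, so that it does not spoil the uniform-in-$t$ convergence of the approximating sequence along the trajectories of $X$. Once \eqref{estimationYZ} is granted for the differences, the Chebyshev--Borel--Cantelli mechanism and the construction of the sets $D^\epsilon$ are routine and identical to the purely Brownian case treated in \cite{MS10}.
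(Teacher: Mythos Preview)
Your approach is correct and is essentially the one the paper takes: the paper gives no detailed argument but simply invokes estimate \eqref{estimationYZ} and refers to the proof of Proposition~1 in \cite{MS10}, which is exactly the smooth-data approximation plus Chebyshev/Borel--Cantelli scheme you outline. One small slip: since $X$ has jumps, for smooth $u^p$ the process $(u^p_t(X_t))_{t\in[0,T]}$ is c\`adl\`ag rather than continuous, but this is harmless---the uniform limit of c\`adl\`ag processes is still c\`adl\`ag, so your conclusion stands.
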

We also need the quasicontinuity of the solution associated to a random
regular measure, as stated in the next proposition. We first give the formal definition of this object.

\begin{definition}
\label{random:regular:measure}
We say that $ u \in \mathcal{H}_T$ is a random regular potential provided that $ u(\cdot,\omega,\cdot)$ has a version which is regular potential, $\mathbb{P}(d\omega)-a.s.$ The random variable $ \nu\, :\, \Omega \longrightarrow \mathcal{M} \left([0,T] \times \mathbb{R}^d \right) $ with values in the set of regular measures on $ [0,T] \times \mathbb{R}^d$ is called a regular random measure, provided that there exits a random regular potential $u$ such that the  measure $ \nu(\omega) (dtdx)$ is associated to the regular potential $ u (\cdot, \omega, \cdot)$, $\mathbb{P}(d\omega)\textbf{-} a.s.$
\end{definition}
The relation between a random measure and its associated random regular potential is described by the following proposition, the proof of which results from  approximation procedure used in the proof of  Theorem \ref{potentielregulier}.
\begin{proposition}
\label{quasicontinuit?bis}
Let $u$ be  a random regular potential and $\nu $ be the associated random regular measure. Let  $\overline{u}$ be the excessive version of $u,$ i.e. $%
\overline{u}\left( \cdot ,\omega ,\cdot \right) $ is a.s. an $(\widetilde{P}_{t}) _{t>0}$ -excessive function which coincides with $
u\left( \cdot ,\omega ,\cdot \right) ,$ $\ dtdx\textbf{-}a.e.$ Then we have the following
properties:\\[0.2cm]
(i) For each $\varepsilon >0,$ there exists a $( \mathcal{F}
_{t,T}^{B}) _{t\in \left[ 0,T\right] }$ -predictable random set $
D^{\varepsilon }\subset \left[ 0,T\right] \times \Omega \times \mathbb{R}^{d}
$ such that $P$ -a.s. the section $D_{\omega }^{\varepsilon }$ is open and $
\overline{u}\left( \cdot ,\omega ,\cdot \right) $ is continuous on its
complement $\left( D_{\omega }^{\varepsilon }\right) ^{c}$ and
\begin{equation*}
\mathbb{P}\otimes \mathbb{P}^{m}\left( \left( \omega ,\omega ^{\prime }\right) /\exists
t\in \left[ 0,T\right] \ \ s.t.\left( t,\omega ,X_{t}\left( \omega^{\prime} \right)
\right) \in D_{\omega }^{\varepsilon }\right) \leq \varepsilon .
\end{equation*}
In particular  the process  $(\overline{u}_t (X_t))_{t \in[0,T]}$ has  c\`adl\`ag trajectories,
$\mathbb{P}\otimes \mathbb{P}^m-$a.s.\\
(ii) There exists a continuous increasing process $A=\left( A_{t}\right)
_{t\in \left[ 0,T\right] }$ defined on $\Omega \times \Omega ^{\prime }$
such that $A_{s}-A_{t}$ is measurable with respect to the $\mathbb{P}\otimes \mathbb{P}^{m}$-completion 
of $\mathcal{F}_{t,T}^{B}\vee \sigma( W_{r}, r\in[t,s]) $, for any $ 0 \leq s \leq t \leq T$, and such that the following relations are fulfilled
a.s., with any $\mathcal{\varphi }\in \mathcal{D}$ and $t\in \left[ 0,T\right]
,$%
\begin{equation*}
\begin{split}
&(a) \quad \left( u_{t},\mathcal{\varphi }_{t}\right) +\int_{t}^{T}\left( \frac{1}{2}%
\left( \nabla u_{s},\nabla \mathcal{\varphi }_{s}\right) +\left( u_{s},\partial _{s}%
\mathcal{\varphi }_{s}\right) \right) ds=\int_{t}^{T}\int_{\mathbb{R}^{d}}%
\mathcal{\varphi }\left( s,x\right) \nu \left( dsdx\right),\\
& (b)  \quad u_{t}(X_{t})=\mathbb{E}\left[ A_{T}\,\big|\mathcal{F}_{t}\vee \mathcal{F}%
_{t,T}^{B}\right] -A_{t}\,,\\
& (c) \quad u_{t}\left( X_{t}\right) =A_{T}-A_{t}-\sum_{i=1}^{d}\int_{t}^{T}\partial
_{i}u_{s}\left( X_{s}\right) dW_{s}^{i}+\int_0^t u_s(X_{s-}+z)-u_s(X_s) \tilde N(dz,ds),\\
& (d) \quad \left\Vert u_{t}\right\Vert _2^{2}+ \int_{t}^{T}\mathcal E(u_s)
\, ds=\mathbb{E}^{m}\left( A_{T}-A_{t}\right) ^{2},\\
& (e) \quad \mathcal{\nu }\left( \mathcal{\varphi }\right) =\mathbb{E}^{m}\int_{0}^{T}\mathcal{%
\varphi }\left( t,X_{t}\right) dA_{t}.
\end{split}
\end{equation*}

\end{proposition}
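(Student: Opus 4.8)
The plan is to reduce the statement to the deterministic Theorem~\ref{potentielregulier} applied $\omega$ by $\omega$ (with the Brownian motion $B$ frozen), and then to carry the $\omega$-dependence through the construction so that the objects produced have the measurability asserted in the proposition. By Definition~\ref{random:regular:measure} there is a $\mathbb{P}$-full set of $\omega$ for which $u(\cdot,\omega,\cdot)$ has a version that is a regular potential and $\nu(\omega)$ is the regular measure associated to it. For such $\omega$, Theorem~\ref{potentielregulier} supplies a continuous increasing process $A^{\omega}=(A^{\omega}_t)_{t\in[0,T]}$ with $A^{\omega}_0=0$, $\mathbb{E}^m[(A^{\omega}_T)^2]<\infty$, and its relations (i)--(v); rewriting them for the pair $(u,\nu)$ yields, for a.s.\ $\omega$, exactly relation (b) (from (i) of that theorem, with $\mathcal{F}_t$ enlarged to $\mathcal{F}_t\vee\mathcal{F}^B_{t,T}$ since the $B$-randomness is now present, using the independence of $\mathcal{F}_T$ and $\mathcal{F}^B_{0,T}$ under $\mathbb{P}\otimes\mathbb{P}^m$), relation (c) (from (ii)), relation (d) (from (iii)), relation (a) (from (iv)) and relation (e) (from (v)). What then remains is: (1) to replace the family $(A^{\omega})_\omega$ and the path-by-path excessive versions by a single jointly measurable process $A$ and a single version $\bar u$ enjoying the adaptedness required in part (ii) and the predictability required in part (i); and (2) to upgrade the $\omega$-by-$\omega$ quasicontinuity of $\bar u(\cdot,\omega,\cdot)$ into the predictable-random-set statement of part (i).

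For the measurability I would re-run the resolvent approximation from the proof of Theorem~\ref{potentielregulier}, keeping track of $\omega$. Put $u^{n}:=n\widetilde{U}_{n}\bar u$, $f^{n}:=n(\bar u-u^{n})$ and $A^{n}_t(\omega,\omega'):=\int_0^t f^{n}_r\big(X_r(\omega')\big)\,dr$, so that $u^{n}(\cdot,\omega,\cdot)\uparrow\bar u(\cdot,\omega,\cdot)$, the $u^{n}$ are continuous potentials, and, by Theorem~\ref{potentielregulier}, $A^{n}_T\to A^{\omega}_T$ in $L^2(\mathbb{P}^m)$ while $\sup_{0\le t\le T}|A^{n}_t-A^{\omega}_t|\to 0$, for a.s.\ $\omega$. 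Since $\widetilde{U}_{\alpha}$ is a deterministic kernel on $[0,T[\,\times\mathbb{R}^d$ whose value at $(s,x)$ integrates $\bar u_r$ only over $r\in[s,T]$, and since $u$ (hence its increasing limit $\bar u$) is $(\mathcal{F}^B_{t,T})$-predictable with values in $\widetilde{F}$, each $u^{n}$ is $(\mathcal{F}^B_{t,T})$-predictable with continuous trajectories in time, and the increments of $A^{n}$ over a subinterval $[s,t]$ carry the adaptedness required of $A$ in part (ii). Setting $\bar u:=\limsup_n u^{n}$ and $A_t:=\limsup_n A^{n}_t$ pointwise produces jointly measurable objects that coincide a.s.\ with the path-by-path ones and inherit these properties; the measurability of $\omega\mapsto\nu(\omega)$ is already part of Definition~\ref{random:regular:measure}.

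For part (i) I would argue as in the proof of Proposition~\ref{quasicontinuit?EDPS} and Proposition~1 of \cite{MS10}. Relation $(\ast\ast)$ from the proof of Theorem~\ref{potentielregulier} together with Corollary~\ref{coefficientfn} (equivalently, the estimate~\eqref{estimationYZ} applied to the resolvent approximations) controls $\mathbb{E}\mathbb{E}^m\big[\sup_{0\le t\le T}|u^{n}_t(X_t)-\bar u_t(X_t)|^2\big]$, so along a suitable deterministic subsequence $(n_k)$ one has $\mathbb{P}\otimes\mathbb{P}^m\big(\sup_{0\le t\le T}|u^{n_k}_t(X_t)-\bar u_t(X_t)|>2^{-k}\big)\le\varepsilon\,2^{-k}$. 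Then
\[
D^{\varepsilon}:=\bigcup_{k}\Big\{(t,\omega,x)\in[0,T]\times\Omega\times\mathbb{R}^d\;:\;\big|\bar u(t,\omega,x)-u^{n_k}(t,\omega,x)\big|>2^{-k}\Big\}
\]
is $(\mathcal{F}^B_{t,T})$-predictable (each $u^{n_k}$ is predictable and continuous in $(t,x)$, and $\bar u$ is predictable), has $\mathbb{P}$-a.s.\ open sections, on whose complement $\bar u(\cdot,\omega,\cdot)$ is a uniform limit of continuous functions, hence continuous, and the displayed estimate gives $\mathbb{P}\otimes\mathbb{P}^m(\exists\,t\in[0,T]:(t,\omega,X_t)\in D^{\varepsilon}_\omega)\le\varepsilon$. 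The c\`adl\`ag property of $(\bar u_t(X_t))_{t\in[0,T]}$, with only inaccessible jumps, then follows from the quasi-left-continuity of $X$ exactly as in the deterministic case.

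The step I expect to be the real obstacle is the measurability and adaptedness bookkeeping of the second paragraph: checking that the resolvent approximation $u^{n}=n\widetilde{U}_n\bar u$ is predictable with respect to the backward filtration — which rests on $\widetilde{U}_\alpha$ integrating forward from the current time up to $T$, matching $(\mathcal{F}^B_{t,T})$ — and that the limiting increasing process $A$ really possesses the increment-measurability claimed in part (ii), where extra care is needed because, unlike in \cite{MS10}, the jump part of $X$ also enters $A^{n}_t-A^{n}_s$ through $X_r$, $r\in[s,t]$.
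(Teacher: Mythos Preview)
Your approach is essentially the one the paper indicates: it states only that the proof ``results from the approximation procedure used in the proof of Theorem~\ref{potentielregulier}'', and you have correctly spelled out that procedure $\omega$-by-$\omega$ via the resolvent approximants $u^{n}=n\widetilde{U}_{n}\bar u$, then handled the joint measurability and the construction of $D^{\varepsilon}$ along the lines of Proposition~\ref{quasicontinuit?EDPS}. Your closing remark about the jump part entering $A^{n}_s-A^{n}_t$ through $X_r$ is well taken --- the increment-measurability statement in (ii) as printed mentions only $\sigma(W_r,\,r\in[t,s])$, which appears to be a carry-over from \cite{MS10} and should really involve $\sigma(X_r,\,r\in[t,s])$.
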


%%\begin{proof}
%%The proof of this proposition results from the approximation procedure used
%%in the proof of  Theorem \ref{potentielregulier}.\\[0.2cm]
%%(i) Let $ r > 0$. The process $ \bar{u}^r = (\bar{u}^r_t)_{t \in[0,T]}$, defined by $ \bar{u}^r_t = P_r  u _{t+r},$ has the property
%%that $ (t,x) \longrightarrow \bar{u}^r_t$ is jointly  continuous $\mathbb{P}\textbf{-}a.s$. We also have
%%$$
%%\lim_{r \to 0} \mathbb{E} \mathbb{E}^m \sup_{ 0 \leq t \leq T} \big| \bar{u}^r_t (X_t) - \bar{u}_t (X_t) \big|^2 = 0,
%%$$
%%by the arguments used at the end of the proof of Theorem \ref{potentielregulier}. The one concludes as in the proof of the preceding proposition.\\[0.2cm]
%%(ii) The construction of the increasing process described in Theorem \ref{potentielregulier}  holds globally  for a random regular potential producing on $a.e.$
%%trajectory $ \omega \in \Omega$, the increasing process corresponding to $ u (\cdot,\omega,\cdot)$.
%%\end{proof}

%%We remark that, taking the expectation of the  relation  (ii-d) of this
%%proposition one gets $$\mathbb{E}\mathbb{E}^{m}\left( A_{T}^{2}\right) =\mathbb{E}\big( \left\Vert
%%u_{0}\right\Vert_2 ^{2}+\int_{0}^{T}\mathcal E(u_t) \,  dt\big).$$

%%%%%%%%%%%%%%%%%%%%%%%%%%%%%%%%%%%%%%%%%%%%%%%%%%%%%%%%%%%%%%%%%%%%%%%%%%%%%%%%%
\section{Existence and uniqueness of the solution of the obstacle problem  }\label{existenceetunicite}
\subsection{The weak solution}
We now precise the definition of the solution of our obstacle problem. We recall that the data satisfy the hypotheses of Section \ref{Hypotheses}.
\begin{definition}
\label{o-spde} We say that a pair  $(u,\nu )$ is a weak solution of the obstacle problem for the SPDE
\eqref{SPDE1} associated to $(\Phi,f,h,v)$, if\\[0.2cm]
(i) $ u \in \mathcal{H}_T $ and  $u (t,x)\geq v (t,x)$,
$d\mathbb{P}\otimes dt\otimes dx-a.e.$ and $u(T,x)=\Phi(x)$,  $d\mathbb{P}\otimes dx-a.e.$.\\[0.2cm]
(ii)  $\nu $ is a random regular
measure on $(0,T) \times \mathbb{R}^d$.\\[0.2cm]
  (iii)  for each $\varphi \in \mathcal{D}_T$ and $ t \in [0,T]$,
\begin{equation}
\label{weak:RSPDE}
\begin{split}
\int_{t}^{T } \big[\left( u_{s},\partial_{s}\varphi_s \right) +&
\,\mathcal {E}( u_{s}, \varphi_s ) \big] ds - \big(\Phi,
\varphi_T \big) +
\big( u_t, \varphi_t \big)
 = \int_{t}^{T} \left(f_s \big(u_{s},\nabla u_s \big), \varphi_s \right)  ds\\& + \int_{t}^{T} \left( h_s\left( u_{s},\nabla u_s\right) ,\varphi_s  \right) \cdot \overleftarrow{dB}_{s}
 + \int_{t}^{T}\int_{\mathbb{R}^d}\varphi_s(x)\, \nu (dx,ds).\\
\end{split}
\end{equation}
(iv) If $\overline{u}$ is a quasicontinuous version of $u,$ then one has%
\begin{equation*}
\int_{0}^{T}\int_{\mathbb{R}^{d}}\left( \overline{u}_{s}\left( x\right)
-v_{s}\left( x\right) \right) \nu \left( dsdx\right) =0, \; a.s.
\end{equation*}
\end{definition}
We note that a given solution $u$ can be writen as a sum $ u= u_1+u_2,$ where $u_1$ satisfies a linear equation $ u_1 = \mathcal{U} \big(\Phi, f(u, \nabla u), h(u, \nabla u)\big)$ with $f, h$ determined by $u$, while $u_2$ is the random regular potential corresponding to the measure $\nu$.  By the Propositions \ref{quasicontinuit?EDPS} and \ref{quasicontinuit?bis}, the conditions (ii) and (iii)
imply that the process $u$ always admits a quasicontinuous version, so that
the condition (iv) makes sense. We also note that if $\overline{u}$ is a quasicontinuous version of $u$, then the trajectories of $X$ do not visit the set $\{ \overline{u} < v \}$, $\mathbb{P}\otimes \mathbb{P}^m\textbf{-}a.s.$

Here it is the main result of our paper
\begin{theorem}
\label{maintheorem}
Assume that the assumptions \textbf{(H)},  \textbf{(HD2)} and \textbf{(HO)}  hold.
Then  there exists a unique weak solution of the obstacle problem for the SPDE  \eqref{SPDE1} associated to $(\Phi,f,h,v)$.
\end{theorem}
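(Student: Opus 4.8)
The plan is to follow the classical penalization scheme used by Matoussi and Stoica \cite{MS10}, adapted to the non-local generator $\mathcal A$ and the jump term coming from the L\'evy process $X$. First I would set up the penalized SPDEs
\[
du^n_t + \big[\tfrac12 \mathcal A u^n_t + f_t(u^n_t,\nabla u^n_t)\big]\,dt + n(u^n_t - v_t)^{-}\,dt + h_t(u^n_t,\nabla u^n_t)\cdot\overleftarrow{dB}_t = 0,\qquad u^n_T = \Phi,
\]
whose existence and uniqueness in $\mathcal H_T$ follow from Theorem 8 of \cite{Denis2} together with a standard fixed-point argument in the contraction-constant regime $\beta^2<1$ (Assumption (H)(iii)). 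Using the It\^o formula of Theorem \ref{FK} and Corollary \ref{FK2} applied to $u^n$ --- and crucially keeping track of the jump contribution $\int U^2_s(z)\,v(dz)$, which is nonnegative and hence only helps the energy estimate --- I would derive uniform bounds: $\sup_n \|u^n\|_T^2 < \infty$, $\sup_n \mathbb E\,\mathbb E^m\big[(\int_0^T n(u^n_s-v_s)^{-}\,ds)^2\big] < \infty$, and the comparison $u^n \le u^{n+1}$, $d\mathbb P\otimes dt\otimes dx$-a.e., via a comparison theorem for BDSDEs with jumps (the sign of the penalization and Assumption (H)(ii) give monotonicity). The monotone limit $u := \lim_n u^n$ then exists in the appropriate sense, and the uniform energy bound gives weak convergence of $\nabla u^n$ and boundedness of the measures $\nu^n(dtdx) := n(u^n-v)^{-}(t,x)\,dtdx$.

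Next I would identify the limit. Set $K^n_t := \int_0^t n(u^n_s - v_s)^{-}(X_s)\,ds$, which by the doubly stochastic It\^o formula is the increasing process in the Doob--Meyer decomposition of $Y^n_t := u^n_t(X_t)$; write $u^n = \mathcal U(\Phi, f(u^n,\nabla u^n), h(u^n,\nabla u^n)) + w^n$ where $w^n$ is the random regular potential attached to $\nu^n$ (Theorem \ref{potentielregulier} and Proposition \ref{quasicontinuit?bis}). The linear part converges in $\mathcal H_T$ by Lipschitz continuity of $f,h$ and the estimate \eqref{estimationYZ}; hence $w^n$ converges in $\mathcal H_T$ to some random regular potential $w =: u_2$, with associated increasing process $A$ the limit (in $L^2(\mathbb P\otimes\mathbb P^m)$, uniformly on trajectories as in the proof of Theorem \ref{potentielregulier}) of the $A^n$ built from $K^n$. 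This yields a random regular measure $\nu$ with $\nu(\varphi) = \mathbb E^m\int_0^T \varphi(t,X_t)\,dA_t$, and passing to the limit in the weak formulation of the penalized equations gives \eqref{weak:RSPDE}. That $u \ge v$ a.e. follows because $\mathbb E\,\mathbb E^m\big[(\int_0^T n(u^n_s-v_s)^{-}\,ds)^2\big]$ bounded forces $\mathbb E\,\mathbb E^m\int_0^T (u_s - v_s)^{-}(X_s)\,ds = 0$, and since $v$ is quasicontinuous and the trajectories of $X$ charge every non-polar set, this gives $(u-v)^{-} = 0$ quasi-everywhere, hence $dt\otimes dx$-a.e. by absolute continuity of the transition density.

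For the Skorokhod (minimality) condition (iv), I would use the uniform convergence on trajectories. By Proposition \ref{quasicontinuit?EDPS} and the analogue of Lemma \ref{Dini}, the c\`adl\`ag processes $Y^n_t = \overline u^n_t(X_t)$ converge uniformly in $t$, $\mathbb P\otimes\mathbb P^m$-a.s., to $Y_t = \overline u_t(X_t)$; likewise $K^n \to A$ uniformly. Then for any $\varepsilon>0$ one estimates
\[
0 \le \mathbb E\,\mathbb E^m \int_0^T (\overline u_s - v_s)^{-}\wedge\varepsilon\,(X_s)\,dA_s = \lim_n \mathbb E\,\mathbb E^m\int_0^T (\overline u^n_s - v_s)^{-}\wedge\varepsilon\,(X_s)\,n(u^n_s-v_s)^{-}(X_s)\,ds,
\]
and on the set $\{(u^n - v)^{-} > 0\}$ one has $\overline u^n \le v$, so by quasicontinuity of both $\overline u^n$ and $v$ this integrand tends to $0$; combined with $\overline u \ge v$ q.e. this forces $\int_0^T (\overline u_s - v_s)(X_s)\,dA_s = 0$, i.e. $\int\!\!\int (\overline u - v)\,d\nu = 0$. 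Finally, uniqueness: given two solutions $(u,\nu)$ and $(u',\nu')$, write each as linear part plus regular potential, apply the doubly stochastic It\^o formula to $|u_t(X_t) - u'_t(X_t)|^2$ (using Corollary \ref{FK2}, noting the jump term again produces a favorable sign), use the Skorokhod conditions to conclude $\int (\overline u - \overline u')\,d(\nu-\nu') \le 0$, and close with the Lipschitz bounds, $\beta^2 < 1$, and Gronwall.

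I expect the main obstacle to be the control of the jump part during the penalization: one must show that the term $\int_{\mathbb R^d}\big(u^n_s(X_{s-}+z) - u^n_s(X_{s-})\big)\,\widetilde N(dz,ds)$ — and its quadratic counterpart $\int U^2_s(z)\,v(dz)$ — behaves well enough under the monotone limit that the c\`adl\`ag processes $Y^n$ still converge \emph{uniformly} on trajectories, which is what Lemma \ref{Dini} needs and what makes condition (iv) rigorous. Unlike the continuous case, $Y^n$ has (inaccessible) jumps, so the Dini-type argument must be applied to the c\`adl\`ag functions together with their left limits, and one has to verify that quasicontinuity of $u^n$ is preserved with uniform (in $n$) control, using Proposition \ref{quasicontinuit?EDPS} and the uniform energy estimate \eqref{estimationYZ}; this is exactly the difficulty flagged in the introduction.
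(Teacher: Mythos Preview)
Your overall penalization scheme matches the paper's, but there is a genuine gap in the step where you claim ``the linear part converges in $\mathcal H_T$ by Lipschitz continuity of $f,h$ and the estimate \eqref{estimationYZ}''. This is circular: Lipschitz continuity only gives $\|f(u^n,\nabla u^n) - f(u^m,\nabla u^m)\|_{2,2} \le C(\|u^n-u^m\|_{2,2} + \|\nabla u^n - \nabla u^m\|_{2,2})$, and at this stage you have monotone (hence strong $L^2$) convergence of $u^n$ but only \emph{weak} convergence of $\nabla u^n$. Without strong convergence of the gradients you cannot conclude that the decomposition $u^n = \mathcal U(\Phi,f(u^n,\nabla u^n),h(u^n,\nabla u^n)) + w^n$ converges termwise, so neither the existence of the limiting regular potential $w$ nor the passage to the limit in \eqref{weak:RSPDE} is justified. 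The same issue blocks your Skorokhod argument: you need $Y^n \to Y$ and $K^n \to K$ \emph{uniformly on trajectories}, which does not follow from the bounds you have assembled.

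The paper breaks this circularity through Lemma~\ref{essentiel}, which proves $\lim_n \mathbb E\,\mathbb E^m\big[\sup_t ((Y^n_t-S_t)^-)^2\big]=0$ \emph{before} any strong convergence of $Z^n$ or $U^n$ is known. The device is a Mazur-type trick: from the uniform $L^2$ bound one extracts convex combinations $\hat f^k,\hat h^k$ of $f(u^i,\nabla u^i),h(u^i,\nabla u^i)$ that converge \emph{strongly}; the solutions $\hat u^k$ of the associated \emph{linear} penalized equations sit below $\lim_n u^n$ by comparison, and for these linear equations one can show $\widehat Y^k_\tau \to S_\tau$ at every stopping time $\tau$ via Corollary~\ref{coefficientfn} and Lemma~\ref{coefficienthn}. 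The section theorem together with the predictable-projection/Dini argument (applied to c\`adl\`ag functions \emph{and} their left limits, exactly the point you flag in your last paragraph) then yields uniform convergence of $(Y^n-S)^-$. Only after this does the paper prove, via the key inequality \eqref{cauchy:Z}, that $(Z^n,U^n,K^n)$ is Cauchy --- the cross-terms $\int (Y^p-S)^- dK^n$ are precisely what Lemma~\ref{essentiel} controls. Your proposal correctly anticipates that the jump term is the delicate point, but it omits this convex-combination reduction, which is the actual mechanism delivering uniform convergence. (For uniqueness the paper simply invokes the comparison Theorem~\ref{comparaison}; your It\^o-on-the-square route would also work once the Skorokhod condition is established.)
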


In order to solve the problem we will use the backward stochastic differential equation  technics. In fact,
we shall follow the main steps of the second proof in \cite{Elk2}, based on the penalization procedure.\\
The uniqueness assertion of  Theorem \ref{maintheorem} results from the following comparison result which can be easily proved.
\begin{theorem}
\label{comparaison}
Let $\Phi ^{\prime },f^{\prime },v^{\prime }$ be similar to $\Phi ,f,v$ and
let $\left( u,\nu \right) $ be the solution of the obstacle problem
corresponding to $\left( \Phi ,f,h,v\right) $ and $\left( u^{\prime },\nu
^{\prime }\right) $ the solution corresponding to $\left( \Phi ^{\prime
},f^{\prime },h,v^{\prime }\right) .$ Assume that the following conditions
hold

\begin{description}
\item[$\left( i\right) $] $\Phi \leq \Phi ^{\prime },\ \ dx\otimes d\mathbb{P}$
-a.e.

\item[$\left( ii\right) $] $f\left( u,\nabla u\right) \leq f^{\prime }\left(
u,\nabla u\right) ,\ \ dtdx\otimes \mathbb{P}$ -a.e.

\item[$\left( iii\right) $] $v\leq v^{\prime },\ \ dtdx\otimes \mathbb{P}$ -a.e.
\end{description}

Then one has $u\leq u^{\prime },\ \ dtdx\otimes \mathbb{P}$ -a.e.
\end{theorem}

\subsection{Approximation by the penalization method}
\label{section:penalization}
For $n\in
\mathbb{N}$, let  $ u^n$ be a  solution  of the following  SPDE
\begin{equation}
\begin{split}
\label{SPDE:n} du^n_t (x) +
\mathcal A u^n_t (x)dt +  f(t,x,u^n_t (x),\nabla u^n_t (x)) dt  &+ n(u^n_t (x) -v_t (x))^{-}\, dt 
 \\&+  h (t,x,u^n_t(x),\nabla u^n_t(x))\, \overleftarrow{dB}_t  = 0 \\
\end{split}
\end{equation}
with final  condition $u^n_T=\Phi$.

Now set $ f_{n}(t,x,y,z)=f(t,x,y,z)+n(y-v_t (x))^{-}$ and $ \nu^n (dt,dx)  := n \big(u_t^n (x) - v_t (x) \big)^{-} dt dx $.
Clearly, for each $n \in \mathbb{N}$, $f_n$ is Lipschitz continuous
in $(y,z)$ uniformly in $(t,x)$. For each $ n\in \mathbb{N}$, Theorem 8 in \cite{Denis2} ensures the existence and uniqueness of a weak  solution $ u^n \in \mathcal{H}_T$ of the SPDE \eqref{SPDE:n} associated
with the data $ (\Phi, f_n, g, h)$. We denote  by $  Y_t^n = u^n (t, X_t ) $, $ Z_n = \nabla u^n (t,X_t) $,$U_t^n(z)=u^n_t(X_{t-}+z)-u^n_t(X_{t-})$ and $S_t = v (t, X_t)$. We shall also assume that $u^n$ is quasicontinuous, so that $Y^n$ is $\mathbb{P}\otimes \mathbb{P}^m\textbf{-}a.e.$ c\`adl\`ag. Then $
 \big(Y^n, Z^n,U^n \big)$  solves the BSDE  associated to the data $ (\Phi, f_n, h)$
\begin{equation}
\begin{split}
\label{BSDE:n}
Y_{t}^n =& \Phi(X_T) +\int_{t}^{T}f_r(X_{r},Y^n_{r},Z^n_{r}) dr + \int_{t}^{T}h_r(X_{r},Y^n_{r},Z^n_{r})\cdot  \overleftarrow{dB}_r\\
& + n \int_t^T (Y_r^n  - S_r^n)^{-}dr -\sum\limits_{i}\int_{t}^{T}Z^n_{i,r}dW_{r}^{i} -\int_t^T\int_{\mathbb{R}^d} U_r^n(z) \tilde N(dz,dr)\, .
\end{split}
\end{equation}
We  define $K_{t}^{n}=n\displaystyle
\int_{0}^{t}(Y_{s}^{n}-S_{s})^{-}ds$ and establish the following lemma.

\begin{lemma}
\label{penalization:estimate1}
The quadruple $(Y^{n},Z^{n},U^n, K^n)$ satisfies the following estimates
\begin{equation}
\label{estimate1:n}
\begin{split}
&\mathbb{E} \mathbb{E}^m \left|Y_t^n \right|^2  +  \lambda_{\epsilon} \mathbb{E } \mathbb{E}^m\int_t^T |Z_r^n|^2 dr +\mathbb{E } \mathbb{E}^m \int_t^T\int_{\mathbb{R}^d} |U^n_s(z)|^2v(dz)ds \\
 \leq&\, c\, \mathbb{ E}\mathbb{ E}^m \big[ |\Phi  (X_T) |^2 +   \int_t^T \left( |f^0_s (X_s) |^2 + |h^0_s (X_s)|^2  \right) ds \big]
 + c_{\epsilon}\,  \mathbb{E} \, \mathbb{E}^m \, \int_t^T |Y_r^n |^2 \, dr\\
 & + c_{\delta}\,  \mathbb{E} \, \mathbb{E}^m \,
\big(\sup_{t \leq r \leq T} |S_r|^2\big)  + \delta \, \mathbb{E} \, \mathbb{E}^m \, \big(K_T^n - K_t^n\big)^2 \\
\end{split}
\end{equation}
where $\lambda_{\epsilon} = 1 - \beta^2 - \epsilon $, $c_{\epsilon}, \, c_{\delta}$ are a positive constants and  $\epsilon>0, \, \delta>0 $ can be chosen small enough such that $\lambda_{\epsilon} > 0 $ .
\end{lemma}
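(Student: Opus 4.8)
The plan is to apply the It\^o formula of Corollary \ref{FK2} to $|Y^n|^2=(u^n)^2(X)$ on $[t,T]$, take the expectation $\mathbb{E}\mathbb{E}^m$, and estimate the resulting terms with Young's inequality and the Lipschitz bounds of Assumption \textbf{(H)}. First I would freeze the coefficients of the penalized equation \eqref{SPDE:n}: the processes $\tilde f_r:=f_r(X_r,Y_r^n,Z_r^n)+n(Y_r^n-S_r)^-$ and $\tilde h_r:=h_r(X_r,Y_r^n,Z_r^n)$ are predictable and square-integrable because $u^n\in\mathcal{H}_T$ and Assumptions \textbf{(H)}, \textbf{(HD2)} hold, so $u^n=\mathcal{U}(\Phi,\tilde f,\tilde h)$ and Corollary \ref{FK2} applies to it. Writing $U_r^n(z)=u^n_r(X_{r-}+z)-u^n_r(X_{r-})$, taking expectations, and splitting $2\int_t^T Y_r^n\tilde f_r\,dr$ into its $f$-part and the $K^n$-part $2\int_t^T Y_r^n\,dK_r^n$ (recall $dK_r^n=n(Y_r^n-S_r)^-\,dr$), one gets for every $t\in[0,T]$
\begin{equation*}
\begin{split}
\mathbb{E}\mathbb{E}^m|Y_t^n|^2 + \mathbb{E}\mathbb{E}^m\int_t^T|Z_r^n|^2\,dr &+ \mathbb{E}\mathbb{E}^m\int_t^T\int_{\mathbb{R}^d}|U_r^n(z)|^2 v(dz)\,dr
= \mathbb{E}\mathbb{E}^m|\Phi(X_T)|^2\\
&+ 2\,\mathbb{E}\mathbb{E}^m\int_t^T Y_r^n f_r(X_r,Y_r^n,Z_r^n)\,dr + 2\,\mathbb{E}\mathbb{E}^m\int_t^T Y_r^n\,dK_r^n\\
&+ \mathbb{E}\mathbb{E}^m\int_t^T|h_r(X_r,Y_r^n,Z_r^n)|^2\,dr ,
\end{split}
\end{equation*}
the stochastic integrals with respect to $dW$, $\tilde N$ and $\overleftarrow{dB}$ having zero mean (martingale property, using the independence of $\mathcal{F}_T$ and $\mathcal{F}^B_{0,T}$ noted after Theorem \ref{FK}, and the integrability provided by $u^n\in\mathcal{H}_T$ and \textbf{(HD2)}).

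Next I would bound the three terms on the right. For the $f$-term, Assumption \textbf{(H)(i)} gives $|f_r(X_r,y,z)|\le|f^0_r(X_r)|+C(|y|+|z|)$, so two uses of $2ab\le\eta a^2+\eta^{-1}b^2$ yield $2Y_r^n f_r(X_r,Y_r^n,Z_r^n)\le\epsilon_1|Z_r^n|^2+c_\epsilon|Y_r^n|^2+c\,|f^0_r(X_r)|^2$. For the $h$-term, Assumption \textbf{(H)(ii)} gives $|h_r(X_r,y,z)|\le|h^0_r(X_r)|+C|y|+\beta|z|$, so for each $\epsilon_2>0$ one has $|h_r(X_r,Y_r^n,Z_r^n)|^2\le(\beta^2+\epsilon_2)|Z_r^n|^2+c_\epsilon\big(|Y_r^n|^2+|h^0_r(X_r)|^2\big)$; this is the step where the contraction property \textbf{(H)(iii)}, $\beta^2<1$, enters. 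For the penalization term I would write $Y_r^n=(Y_r^n-S_r)+S_r$ and exploit $(Y_r^n-S_r)\,dK_r^n=-n\big((Y_r^n-S_r)^-\big)^2\,dr\le0$, which gives $2\int_t^T Y_r^n\,dK_r^n\le 2\int_t^T S_r\,dK_r^n\le 2\big(\sup_{t\le r\le T}|S_r|\big)\big(K_T^n-K_t^n\big)\le c_\delta\sup_{t\le r\le T}|S_r|^2+\delta\big(K_T^n-K_t^n\big)^2$.

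Finally I would collect the bounds, set $\epsilon:=\epsilon_1+\epsilon_2$ small enough that $\lambda_\epsilon:=1-\beta^2-\epsilon>0$, and move the resulting $(\beta^2+\epsilon)\,\mathbb{E}\mathbb{E}^m\int_t^T|Z_r^n|^2\,dr$ to the left-hand side; this leaves $\lambda_\epsilon\,\mathbb{E}\mathbb{E}^m\int_t^T|Z_r^n|^2 + \mathbb{E}\mathbb{E}^m\int_t^T\int_{\mathbb{R}^d}|U_r^n(z)|^2 v(dz)$ on the left and reassembles the rest into \eqref{estimate1:n}. The step I expect to require the most care is precisely this absorption: the $|Z^n|^2$ contributions generated simultaneously by the linear growth of $f$ and by $|h|^2$ must stay below the single positive threshold $\lambda_\epsilon$, which is possible only because $\beta^2<1$; everything else is a routine application of the energy identity of Corollary \ref{FK2} and Young's inequality. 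I would also emphasize that \eqref{estimate1:n} is deliberately left unclosed at this stage: the terms $c_\epsilon\int_t^T|Y_r^n|^2$ and $\delta(K_T^n-K_t^n)^2$ on the right are meant to be removed afterwards, respectively by Gronwall's lemma and by a separate uniform $L^2$-bound on $K^n$, in the lemmas that follow.
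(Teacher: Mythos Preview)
Your proposal is correct and follows essentially the same route as the paper: apply the It\^o energy identity to $|Y^n|^2$, take $\mathbb{E}\mathbb{E}^m$, bound the $f$- and $h$-terms via Assumption~\textbf{(H)} and Young's inequality (absorbing the $|Z^n|^2$ contribution thanks to $\beta^2<1$), and handle $2\int Y^n\,dK^n$ by the splitting $Y^n=(Y^n-S)+S$ together with $(Y^n-S)\,dK^n\le 0$. Your closing remark that Gronwall is deferred to the next lemmas is in fact more accurate than the paper's own wording, which invokes Gronwall at the end of this proof even though the stated inequality still carries the term $c_\epsilon\int_t^T|Y^n_r|^2\,dr$.
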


\begin{proof} Applying It\^o 's formula to $(Y^n)^2$, it follows that
\begin{equation}
\label{Ito:BSDEn}
\begin{split}
|Y^n_t|^2&+\int_t^T|Z_s^n|^2ds+\int_t^T\int_{\mathbb R^d} |U^n_s(z)|^2v(dz)ds
=|\Phi(X_T)|^2+2\int_t^TY_sf(s,X_s,Y^n_s,Z^n_s)ds\\&+2\int_t^TY_sdK^n_s
+\int_t^T|h(s,X_s,Y^n_s,Z^n_s)|^2ds+2\int_t^TY^n_sh_s(X_s,Y^n_s,Z^n_s)\cdot\overleftarrow{dB}_s
\\&-2\int_t^TY_s^nZ^n_s dW_s-\int^T_t\int_{\mathbb R^d}|Y^n_{s-}+z|^2-|Y_{s-}^n|^2\tilde N(dz,ds).
\end{split}
\end{equation}
Using  assumption \textbf{(H)} and  taking the expectation in the above equation under $ \mathbb{P} \otimes  \mathbb{P}^m$,  we  obtain
\begin{equation*}
\begin{split}
&\mathbb{E} \mathbb{E}^m   \left| Y_{t}^{n}\right| ^{2}+  \mathbb{E} \mathbb{E}^m \int_{t}^{T}\left| Z_{s}^{n}\right| ^{2}ds ++\mathbb{E} \mathbb{E}^m \int^t_T\int_{\mathbb R^d}|U^n_s(z)|^2v(dz)ds\\
 \leq&\, \mathbb{E} \left| \Phi (X_T) \right|
^{2}+ c_{\varepsilon} \mathbb{E} \mathbb{E}^m \int_{t}^{T} \big[ |f_s^0(X_s)|^{2} + |h_s^0(X_s)|^{2} \big] ds+ c_{\varepsilon} \,\mathbb{ E} \mathbb{E}^m \int_{t}^{T}\left| Y_{s}^{n}\right|^{2}ds\\
 & +   \left( \beta^2 + \varepsilon\right)\,  \mathbb{E} \mathbb{E}^m \int_{t}^{T}\left| Z_{s}^{n}\right| ^{2}ds  +\frac{1}{%
	\gamma }\mathbb{E} \mathbb{E}^m [\sup_{t\leq s \leq T}|S_{s}|^{2}]+\gamma
\mathbb{E}\mathbb{E}^m [(K_{T}^{n}-K_{t}^{n})^{2}]
\end{split}
\end{equation*}
where $\varepsilon, \gamma $ are positive constants and $ c_{\varepsilon}$ is a constant which can be different from line to line.  Finally Gronwall's lemma leads to the desired inequality.
\end{proof}

\begin{lemma}
\begin{equation}
\label{estimation:Kn}
\begin{split}
\mathbb{E}\mathbb{E}^m [(K_{T}^{n}-K_{t}^{n})^{2}]  \leq&\,  c' \Big[ \mathbb{E} \mathbb{E}^m \left| Y_{t}^{n}\right| ^{2} + \|\Phi \|_2^2 \Big] +  c_{\varepsilon} \Big[ 
 \mathbb{E}  \int_{t}^{T} \left[ \|f_s^0\|_2^{2}+ \|h_s^0\|_2^{2} \right] ds\\&+ \mathbb{ E} \mathbb{E}^m \int_{t}^{T} \big[\left| Y_{s}^{n}\right| ^{2}\,
 +  \left| Z_{s}^{n}\right| ^{2}+\int_{\mathbb{R}^d} |U^n_s(z)|^2v(dz)\big]ds \Big].
\end{split}
\end{equation}
\end{lemma}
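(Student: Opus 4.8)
The starting point is to read the increment of the penalization process directly off the backward equation \eqref{BSDE:n}: writing that equation between $t$ and $T$ and solving for $K_T^n-K_t^n=n\int_t^T(Y_r^n-S_r)^-\,dr$ gives
\begin{equation*}
\begin{split}
K_T^n-K_t^n =\ & Y_t^n - \Phi(X_T) - \int_t^T f_r(X_r,Y_r^n,Z_r^n)\,dr - \int_t^T h_r(X_r,Y_r^n,Z_r^n)\cdot\overleftarrow{dB}_r \\
& + \sum_i\int_t^T Z_{i,r}^n\,dW_r^i + \int_t^T\int_{\mathbb{R}^d} U_r^n(z)\,\tilde N(dz,dr).
\end{split}
\end{equation*}
First I would square this identity, apply the elementary bound $\big(\sum_{k=1}^{6}a_k\big)^2\le 6\sum_{k=1}^{6}a_k^2$ (which makes it unnecessary to track any cross terms), and take expectation under $\mathbb{P}\otimes\mathbb{P}^m$. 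The $Y_t^n$ term produces the contribution $\mathbb{E}\mathbb{E}^m|Y_t^n|^2$, the first term on the right of \eqref{estimation:Kn}; the $\Phi(X_T)$ term produces $\mathbb{E}\mathbb{E}^m|\Phi(X_T)|^2$, which equals $\|\Phi\|_2^2$ because the Lebesgue measure is invariant for $(P_t)$, as established in Section \ref{spaces} (so that $\mathbb{E}^m[g(X_r)]=\int_{\mathbb{R}^d}g\,dx$ for nonnegative Borel $g$).

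Next I would bound the remaining five terms one at a time. For the Bochner integral of $f$, Cauchy--Schwarz gives $\big(\int_t^T f_r\,dr\big)^2\le (T-t)\int_t^T|f_r(X_r,Y_r^n,Z_r^n)|^2\,dr$, and the Lipschitz hypothesis \textbf{(H)(i)} then yields $|f_r(X_r,Y_r^n,Z_r^n)|^2\le c\big(|f_r^0(X_r)|^2+|Y_r^n|^2+|Z_r^n|^2\big)$; taking expectations and invoking the invariance of $m$ once more turns $\mathbb{E}\mathbb{E}^m\int_t^T|f_r^0(X_r)|^2\,dr$ into $\mathbb{E}\int_t^T\|f_r^0\|_2^2\,dr$. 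The three stochastic integrals are treated by their It\^o isometries: the forward integral against $dW$ and the compensated jump integral against $\tilde N$ --- which are $\mathbb{P}^m$-martingales in the doubly stochastic setting recalled after Theorem \ref{FK} --- contribute $\mathbb{E}\mathbb{E}^m\int_t^T|Z_r^n|^2\,dr$ and $\mathbb{E}\mathbb{E}^m\int_t^T\int_{\mathbb{R}^d}|U_r^n(z)|^2\,v(dz)\,dr$ respectively, and the backward It\^o integral against $\overleftarrow{dB}$ contributes $\mathbb{E}\mathbb{E}^m\int_t^T|h_r(X_r,Y_r^n,Z_r^n)|^2\,dr$, which by \textbf{(H)(ii)} and the invariance of $m$ is dominated by $c\big(\mathbb{E}\int_t^T\|h_r^0\|_2^2\,dr+\mathbb{E}\mathbb{E}^m\int_t^T(|Y_r^n|^2+|Z_r^n|^2)\,dr\big)$. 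Collecting all contributions produces exactly the right-hand side of \eqref{estimation:Kn}, with a numerical constant $c'$ in front of $\mathbb{E}\mathbb{E}^m|Y_t^n|^2+\|\Phi\|_2^2$ and a constant $c_\varepsilon$ (generic, in the notation of the preceding lemma) in front of the remaining integral terms.

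All the quantities involved are a priori finite: since $u^n\in\mathcal{H}_T$ by Theorem 8 of \cite{Denis2} and $u^n$ may be viewed as the solution of a linear SPDE with coefficients frozen along $(Y^n,Z^n)$, Corollary \ref{FK2} (estimate \eqref{estimationYZ}) guarantees $\mathbb{E}\mathbb{E}^m\int_t^T|Z_r^n|^2\,dr<\infty$ and $\mathbb{E}\mathbb{E}^m\int_t^T\int_{\mathbb{R}^d}|U_r^n(z)|^2\,v(dz)\,dr<\infty$, so the representation above and all the manipulations are legitimate. The only genuinely delicate point --- and the one I expect to need the most care --- is the justification of the three isometries, since $Z^n$ and $U^n$ depend jointly on $\omega\in\Omega$ (through $u^n$, which is $\mathcal{F}^B_{\cdot,T}$-measurable) and on $\omega'\in\Omega'$ (through $X$) and hence are not adapted in the naive sense. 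This is resolved exactly as in \cite{Denis2} and \cite{MS10}: because $\mathcal{F}_T$ and $\mathcal{F}^B_{0,T}$ are independent under $\mathbb{P}\otimes\mathbb{P}^m$, the process $W$ remains a martingale with respect to the enlarged forward filtration $\mathcal{F}_t\vee\mathcal{F}^B_{0,T}$, to which $Z^n$ and $U^n$ are adapted, and symmetrically the backward integral against $\overleftarrow{dB}$ acts independently of $\mathcal{F}_T$, so the isometries apply verbatim.
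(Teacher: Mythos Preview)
Your argument is correct and in fact more direct than the paper's. You read $K_T^n-K_t^n$ straight off the BDSDE \eqref{BSDE:n}, square, and use the backward It\^o isometry for the $\overleftarrow{dB}$-integral together with the forward isometries for $dW$ and $\tilde N$; the doubly stochastic independence structure you invoke (the integrand $h_r(X_r,Y_r^n,Z_r^n)$ being $\mathcal{F}_T\vee\mathcal{F}^B_{r,T}$-measurable, hence independent of the relevant backward increment of $B$) is exactly what legitimizes that isometry here. The paper instead introduces an auxiliary linear SPDE $d\widetilde{u}^n_t+\mathcal A\widetilde{u}^n_t\,dt+h_t(u^n_t,\nabla u^n_t)\cdot\overleftarrow{dB}_t=0$ with zero terminal data, so that the difference $u^n-\widetilde{u}^n$ satisfies an equation \emph{without} a backward stochastic integral; one then expresses $K_T^n-K_t^n$ from the representation of $Y^n-\widetilde{Y}^n$ (which has only $dW$- and $\tilde N$-integrals) and controls the auxiliary terms $\widetilde{Y}^n,\widetilde{Z}^n,\widetilde{U}^n$ via Corollary \ref{FK2}. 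That detour is inherited from \cite{MS10}, where an additional divergence term in the equation makes the direct estimate awkward; in the present setting (no divergence term, as the authors themselves note in the introduction) your one-step approach is cleaner and yields the same bound. The only cost is that you must justify the backward isometry in the enlarged filtration, which you do.
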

\begin{proof}
Let $(\widetilde{u}^n)_{n\in \mathbb{N}}$ be the weak solutions
of the following linear type equations
\begin{equation*}
d \widetilde{u}_t^{n} + \mathcal A  \widetilde{u}_t^{n} + h_t \left(u_t^n, \nabla u_t^n \right) \cdot \overleftarrow{dB}_t= 0,
\end{equation*}
with final condition $\widetilde{u}_T^n = 0.$ Set $ \widetilde{ Y}_t^n = \widetilde{u}^n (t, X_t ) $, $ \widetilde{Z}_t^n = \nabla \widetilde{u}^n (t, X_t) $ and $\widetilde U^n_t(z)=\widetilde u^n(t,X_{t-}+z)-\widetilde u^n(t,X_{t-})$.
Then, by estimate \eqref{estimationYZ}, one has
\begin{equation}
\label{estimation:tilde}
\mathbb{E} \mathbb{E}^m  \Big[ \big| \widetilde{Y}_{t}^{n}\big| ^{2} + \int_0^T \big| \widetilde{Z}_{s}^{n}\big|\,  ds +\int_0^T\int_{\mathbb R^d}|\widetilde U^n_t(z)|^2v(dz)dt\Big]  \leq \tilde{c} \Lambda
\end{equation}
where $ \Lambda = \displaystyle  \mathbb{E} \mathbb{E}^m \int_{0}^{T}  |h_s(X_s, Y_s^n, Z_s^n)|^{2} ds$.
Since $ u^n - \widetilde{u}^n$ verifies the equation
\begin{equation*}
\partial_t(u_t^n - \widetilde{u}_t^{n})  + \mathcal A  (u^n - \widetilde{u}_t^{n})  + f_t(u_t^n, \nabla u_t^n) + n(u^n_t -v_t)^{-}\, dt = 0,
\end{equation*}
we have the stochastic representation
\begin{equation*}
\begin{split}
Y_{t}^n - \widetilde{Y}^n_t &  = \Phi\left(X_T\right) +\int_{t}^{T}f_r
\left(X_{r},Y^n_{r},Z^n_{r}
\right) dr + K_T^n - K_t^n -\sum\limits_{i}\int_{t}^{T} \big(Z^n_{i,r}- \widetilde{Z}^n_{i,r} \big)\, dW_{r}^{i}\\
&\quad-\int_t^T\int_{\mathbb R^d}\left(U^n_s(z)-\tilde U^n_t(z)\right)\tilde N(dz,ds), \\
\end{split}
\end{equation*}

from which one obtains the estimate
\begin{equation*}
\begin{split}
\mathbb{E}\mathbb{E}^m [(K_{T}^{n}-K_{t}^{n})^{2}]   \leq&\, c \, \mathbb{E}  \mathbb{E}^m  \Big[\left| Y_{t}^{n}\right| ^{2} +  |\widetilde{ Y}_{t}^{n}| ^{2}  +   \big|\Phi\left( X_T\right)\big|^2 + \int_t^T \big( |f_s^0 (X_s) |^2 + \left| Y_{s}^{n}\right| ^{2}+  \left| Z_{s}^{n}\right|^{2} \big)ds  \\
&+ \int_t^T |\widetilde{Z}_{s}^{n}|^{2} \, ds+\int^T_t\int_{\mathbb R^d}|U^n_t(z)|^2v(dz)dt +\int^T_t\int_{\mathbb R^d}|\widetilde U^n_t(z)|^2v(dz)dt\Big].
\end{split}
\end{equation*}
Hence, using \eqref{estimation:tilde}, we get
\begin{equation*}
\begin{split}
\mathbb{E}\mathbb{E}^m [(K_{T}^{n}-K_{t}^{n})^{2}] \leq&\, c' \, \mathbb{E} \mathbb{E}^m  \left[   \left| Y_{t}^{n} \right| ^{2} +
\left|\Phi (X_T)\right|^2 \,   \right]  + c_{\varepsilon}'  \mathbb{E} \mathbb{E}^m \Big[ \int_t^T \left[ |f_s^0 (X_s) |^2 
+ |h_s^0 (X_s) |^2 \right] \, ds\\&+\int_t^T \big[ \left| Y_{s}^{n}\right| ^{2} +  \left| Z_{s}^{n}\right|^{2}+\int_{\mathbb R^d}|U^n_t(z)|^2v(dz)\big] ds    \, \Big],  \\
\end{split}
\end{equation*}
which gives our assertion. \end{proof}
\begin{lemma}
	\label{mainestimate}
	The quadruple $(Y^{n},Z^{n},U^n, K^n)$ satisfies the following estimate
	\begin{equation*}
	\begin{split}
	&\mathbb{E}\mathbb{E}^m \, \big( \sup_{0 \leq s \leq T} \left|Y_s^n \right|^2 \big) +  \mathbb{E }\mathbb{E}^m  \int_0^T |Z_s^n|^2 \, ds  + \mathbb{E}\mathbb{E}^m \left( K_T^n \right)^2 +\mathbb{E} \, \mathbb{E}^m\int_0^T\int_{\mathbb R^d}|U^n_t(z)|^2v(dz)dt \\
	 \leq& \,c \, \Big[ \|\Phi \|_2^2 + \mathbb{E}\mathbb{E}^m \,
	\big(\sup_{0 \leq s \leq T} |S_s|^2\big)   + \mathbb{E}  \int_0^T \big[ \|f^0_s\|_2^2 + \|h^0_s\|_2^2 \, \big]\, ds \,  \Big] \\
	\end{split}
	\end{equation*}
	where $ c >0$ is a  constant.
\end{lemma}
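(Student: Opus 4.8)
The strategy is to close the two a priori bounds \eqref{estimate1:n} and \eqref{estimation:Kn} against each other by an absorption argument, then apply Gronwall's lemma, and finally upgrade to a bound on the supremum by the Burkholder--Davis--Gundy (BDG) inequality applied to the It\^o expansion \eqref{Ito:BSDEn}. Write $\Delta:=\|\Phi\|_2^2+\mathbb{E}\mathbb{E}^m\big(\sup_{0\le s\le T}|S_s|^2\big)+\mathbb{E}\int_0^T\big[\|f_s^0\|_2^2+\|h_s^0\|_2^2\big]\,ds$ for the data on the right of the claim; all constants below are independent of $n$.

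\emph{Step 1.} Plug \eqref{estimation:Kn} into the term $\delta\,\mathbb{E}\mathbb{E}^m(K_T^n-K_t^n)^2$ of \eqref{estimate1:n}. The factor $\delta$ makes the resulting $Z^n$-- and $U^n$--contributions equal to $\delta c_\varepsilon\,\mathbb{E}\mathbb{E}^m\int_t^T\big[|Z_s^n|^2+\int_{\mathbb{R}^d}|U_s^n(z)|^2v(dz)\big]ds$ and the $Y_t^n$--contribution equal to $\delta c'\,\mathbb{E}\mathbb{E}^m|Y_t^n|^2$. Fixing $\varepsilon>0$ first (so $\lambda_\varepsilon=1-\beta^2-\varepsilon>0$) and then $\delta>0$ small enough that $\delta c_\varepsilon<\lambda_\varepsilon$, $\delta c_\varepsilon<1$, $\delta c'<1$, these three are absorbed by the left-hand side of \eqref{estimate1:n}, the remaining pieces going into $\Delta$ or into $\mathbb{E}\mathbb{E}^m\int_t^T|Y_s^n|^2ds$. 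This gives, with new positive constants $c_1,c_2,c_3$,
\[
\mathbb{E}\mathbb{E}^m|Y_t^n|^2+c_1\mathbb{E}\mathbb{E}^m\!\int_t^T\!|Z_s^n|^2ds+c_2\mathbb{E}\mathbb{E}^m\!\int_t^T\!\!\int_{\mathbb{R}^d}\!|U_s^n(z)|^2v(dz)\,ds\ \le\ c\,\Delta+c_3\mathbb{E}\mathbb{E}^m\!\int_t^T\!|Y_s^n|^2ds .
\]
Dropping the $Z^n$-- and $U^n$--terms and applying Gronwall's lemma (backwards in $t$) to $t\mapsto\mathbb{E}\mathbb{E}^m|Y_t^n|^2$ yields $\sup_{t}\mathbb{E}\mathbb{E}^m|Y_t^n|^2\le c\,\Delta$; feeding this back with $t=0$ controls $\mathbb{E}\mathbb{E}^m\int_0^T|Z_s^n|^2ds$ and $\mathbb{E}\mathbb{E}^m\int_0^T\int_{\mathbb{R}^d}|U_s^n(z)|^2v(dz)\,ds$ by $c\,\Delta$; and \eqref{estimation:Kn} with $t=0$ then gives $\mathbb{E}\mathbb{E}^m(K_T^n)^2\le c\,\Delta$.

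\emph{Step 2.} It remains to replace $\sup_t\mathbb{E}\mathbb{E}^m|Y_t^n|^2$ by $\mathbb{E}\mathbb{E}^m\sup_t|Y_t^n|^2$. Rearrange \eqref{Ito:BSDEn}, discard the nonpositive terms $-\int|Z_s^n|^2ds$ and $-\int\int|U_s^n(z)|^2v(dz)ds$, take $\sup_{0\le t\le T}$, then $\mathbb{E}\mathbb{E}^m$. The Lebesgue integrals $\int_0^T|Y_s^nf_s|ds$ and $\int_0^T|h_s|^2ds$ are bounded via \textbf{(H)} by quantities controlled in Step 1; the penalization term obeys $\big|2\int_t^TY_s^n\,dK_s^n\big|\le 2\big(\sup_s|Y_s^n|\big)K_T^n\le\frac14\eta\sup_s|Y_s^n|^2+\frac4\eta(K_T^n)^2$; and the $dW$-- and $\overleftarrow{dB}$--martingale terms are handled by BDG and Young's inequality, their brackets $\int_0^T|Y_s^nZ_s^n|^2ds$, $\int_0^T|Y_s^nh_s|^2ds$ being $\le(\sup_s|Y_s^n|^2)\int_0^T(|Z_s^n|^2+|h_s|^2)ds$ and hence again producing $\tfrac14\eta\sup|Y^n|^2$-type pieces. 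The delicate term is the jump martingale $\int_0^{\,\cdot}\int_{\mathbb{R}^d}\big(|Y_{s-}^n+U_s^n(z)|^2-|Y_{s-}^n|^2\big)\tilde N(dz,ds)$: its bracket is $\int_0^T\int_{\mathbb{R}^d}|U_s^n(z)|^2|2Y_{s-}^n+U_s^n(z)|^2N(dz,ds)$, and since $Y_{s-}^n+U_s^n(z)=u_s^n(X_{s-}+z)$, which $N(dz,ds)$-a.e. equals $Y_s^n$, this bracket is $\le 4\big(\sup_r|Y_r^n|^2\big)\int_0^T\int_{\mathbb{R}^d}|U_s^n(z)|^2N(dz,ds)$; the $L^1$-form of BDG and Young then bound this martingale's contribution by $\frac14\eta\,\mathbb{E}\mathbb{E}^m\sup_r|Y_r^n|^2+\frac C\eta\,\mathbb{E}\mathbb{E}^m\int_0^T\int_{\mathbb{R}^d}|U_s^n(z)|^2v(dz)ds$. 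Taking $\eta$ small enough to absorb all the $\frac14\eta\sup|Y^n|^2$ pieces on the left yields $\mathbb{E}\mathbb{E}^m\sup_t|Y_t^n|^2\le c\,\Delta$, which together with the three bounds of Step 1 is the claim.

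\emph{Main obstacle.} Beyond the routine bookkeeping of the constants $\varepsilon,\delta,\eta$ and the order of absorptions, the one real point is the jump-martingale term in Step 2: one must avoid generating an uncontrollable $\int_0^T\int_{\mathbb{R}^d}|U_s^n(z)|^4v(dz)ds$, and instead exploit the pointwise identity $|2Y_{s-}^n+U_s^n(z)|=|Y_{s-}^n+Y_s^n|\le 2\sup_r|Y_r^n|$ at the jump times of $X$ (equivalently $\sum_{s\le T}|\Delta Y_s^n|^4\le 4(\sup_s|Y_s^n|^2)\sum_{s\le T}|\Delta Y_s^n|^2$) to reduce the bracket to $\sup|Y^n|^2$ times the already-controlled $\int_0^T\int_{\mathbb{R}^d}|U_s^n(z)|^2N(dz,ds)$. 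This is precisely the ``estimate on the jump part of the L\'evy process'' highlighted in the introduction.
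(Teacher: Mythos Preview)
Your proof is correct and Step~1 is exactly the paper's argument: substitute \eqref{estimation:Kn} into \eqref{estimate1:n}, choose $\varepsilon$ then $\delta$ small, and apply Gronwall. Step~2, however, differs from the paper. You take the sup in the It\^o expansion \eqref{Ito:BSDEn} of $|Y^n|^2$, which forces you to control the jump martingale $\int\!\!\int\big(|Y_{s-}^n+U_s^n(z)|^2-|Y_{s-}^n|^2\big)\tilde N(dz,ds)$; your identity $Y_{s-}^n+U_s^n(z)=Y_s^n$ at the jump times of $X$ (so that $|2Y_{s-}^n+U_s^n(z)|\le 2\sup_r|Y_r^n|$) together with the predictability of $|U_s^n(z)|^2$ to replace $N$ by $v(dz)ds$ in expectation does the job cleanly. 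The paper instead returns to the linear representation \eqref{BSDE:n} of $Y^n$ itself, squares it, takes the sup, and applies BDG directly to the three martingales $\int Z^n\,dW$, $\int h\cdot\overleftarrow{dB}$, $\int\!\!\int U^n\,\tilde N$: their quadratic variations are exactly $\int_0^T|Z_s^n|^2ds$, $\int_0^T|h_s|^2ds$, $\int_0^T\!\!\int|U_s^n(z)|^2v(dz)ds$, all already bounded in Step~1, so no absorption or jump-time trick is needed. Your route is a genuine alternative and the jump-bracket identity is nice in its own right, but the paper's route is shorter here; incidentally, the ``estimate on the jump part'' alluded to in the introduction refers to the uniform-convergence argument in Lemma~\ref{essentiel}, not to the present lemma.
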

\begin{proof} From \eqref{estimate1:n} and \eqref{estimation:Kn} we get
\begin{equation*}
\begin{split}
&\left(1 - \delta c' \right) \mathbb{E} \mathbb{E}^m\left|Y_s^n \right|^2  +
 \left( 1  - \beta^2 - \epsilon  - \delta c_{\varepsilon}' \right)   \mathbb{E } \mathbb{E}^m\int_s^T |Z_r^n|^2 \, dr +\mathbb{E}\mathbb{E}^m\int_t^T\int_{\mathbb R^d}|U^n_r(z)|^2v(dz)dr\\
 \leq& \, \left( 1 + c' \delta \right) \|\Phi \|_2^2 +  \left(c_{\epsilon} + \delta c_{\varepsilon}' \right) \Lambda+\left(c_{\epsilon} + \delta c_{\epsilon}'\right)\mathbb{E} \mathbb{E}^m \int_s^T |Y_r^n |^2 \, ds + c_{\delta}\,  \mathbb{E} \, \mathbb{E}^m \,
\big(\sup_{t \leq r \leq T} |S_r|^2\big) , \\
\end{split}
\end{equation*}
where $ \Lambda = \displaystyle  \mathbb{E}\mathbb{E}^m \int_t^T \big[|f^0_s (X_s) |^2 + |h^0_s(X_s)|^2 \big]ds $. It then follows from Gronwall's lemma that
\begin{equation*}
\begin{split}
&\sup_{0 \leq s \leq T}  \mathbb{E} \mathbb{E}^m \, \left( \left|Y_s^n \right|^2 \right) +  \mathbb{E }\mathbb{E}^m  \int_s^T |Z_r^n|^2 \, dr+ \mathbb{E}\mathbb{E}^m\int_0^T\int_{\mathbb R^d}|U^n_t(z)|^2v(dz)dt + \mathbb{E} \mathbb{E}^m \left( K_T^n \right)^2  \\
 \leq& \,c_1  \, \Big[ \|\Phi \|_2^2 +  \mathbb{E} \mathbb{E}^m \,
\big(\sup_{0 \leq r \leq T} |S_r|^2\big)  +  \mathbb{ E}\int_s^T \Big[ \|f^0_r\|_2^2  + \|h^0_r\|_2^2 \, \Big]\, dr \,  \Big]. \\
\end{split}
\end{equation*}
Coming back to the equation  \eqref{BSDE:n} and using Bukholder-Davis-Gundy's inequality and the last estimates we get our statement.
\end{proof}

In order to prove the strong convergence of the sequence $ (Y^n,Z^n, U^n, K^n)$ we shall need the following essential result.

\begin{lemma}
\label{essentiel}
\begin{equation}
\lim_{n \to \infty}  \mathbb{E} \mathbb{E}^m \left[
\sup_{ 0\leq t \leq T} \left( \left(Y_t^n - S_t\right)^- \right)^2 \, \right] = 0. \end{equation}
\end{lemma}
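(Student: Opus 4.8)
The plan is to run the classical penalization\,/\,Dini argument of \cite{Elk2}, adapted to the jump setting: use the comparison principle to get monotonicity of $Y^n$, the a priori bound of Lemma~\ref{mainestimate} to pass to the limit, and Lemma~\ref{Dini} to turn a pointwise convergence into the uniform-on-trajectories convergence that is really needed here.

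First I would collect what comes for free. Since $(y-v_t(x))^-\ge 0$, the penalized generators satisfy $f_n\le f_{n+1}$, so the comparison principle for (non-reflected) SPDEs underlying Theorem~\ref{comparaison} gives $u^n\le u^{n+1}$, i.e. $Y^n_t\le Y^{n+1}_t$ for every $t$, $\mathbb{P}\otimes\mathbb{P}^m$-a.e.; with Lemma~\ref{mainestimate} the sequence increases to a limit $Y^n\uparrow Y$, with $Y\in L^2(\mathbb{P}\otimes\mathbb{P}^m)$ and $\sup_n\mathbb{E}\mathbb{E}^m(K^n_T)^2<\infty$. By the same bounds the weak limit $u$ of $u^n$ lies in $\mathcal{H}_T$ and, by passing to the limit in the decomposition of $u^n$ into a linear part and a random regular potential, it admits a quasicontinuous version $\bar u$ (Propositions~\ref{quasicontinuit?EDPS} and~\ref{quasicontinuit?bis}); moreover the quasicontinuous versions $\bar u^n$ increase to $\bar u$ off a polar set, so $Y^n_t=\bar u^n_t(X_t)\uparrow\bar Y_t:=\bar u_t(X_t)$ and, using the predictable-projection identities of Section~\ref{measure}, $Y^n_{t-}\uparrow\bar Y_{t-}$ for \emph{every} $t$, $\mathbb{P}\otimes\mathbb{P}^m$-a.s. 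Finally Cauchy--Schwarz and the bound on $K^n_T$ give
\[
\mathbb{E}\mathbb{E}^m\int_0^T (Y^n_s-S_s)^-\,ds=\frac1n\,\mathbb{E}\mathbb{E}^m K^n_T\le\frac cn,
\]
which tends to $0$; since $(Y^n_s-S_s)^-=(S_s-Y^n_s)^+$ is nonincreasing in $n$, it decreases to $0$ for $dt\otimes d\mathbb{P}\otimes d\mathbb{P}^m$-a.e.\ $(s,\omega,\omega')$, hence (using the invariance of $m=dx$ under $X$) $\bar u\ge v$, $dt\otimes dx$-a.e., for $\mathbb{P}$-a.e.\ $\omega$.

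Second, the heart of the matter: upgrading this to a pointwise statement. Since $v$ (Assumption~\textbf{(HO)}) and $\bar u$ are quasicontinuous, $\psi:=(v-\bar u)^+$ is quasicontinuous, so by the remark after Definition~\ref{potential} the process $t\mapsto\psi_t(X_t)$ is càdlàg with only inaccessible jumps. It vanishes for $dt$-a.e.\ $t$ by the previous paragraph, and a càdlàg function vanishing Lebesgue-a.e.\ vanishes identically; therefore $\bar Y_t=\bar u_t(X_t)\ge v_t(X_t)=S_t$ for all $t\in[0,T]$ and, taking left limits, $\bar Y_{t-}\ge S_{t-}$, $\mathbb{P}\otimes\mathbb{P}^m$-a.s. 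Consequently, for a.e.\ trajectory, $g^n_t:=(Y^n_t-S_t)^-=(S_t-Y^n_t)^+$ is a nonincreasing sequence of nonnegative càdlàg functions on $[0,T]$ with $g^n_t\downarrow(S_t-\bar Y_t)^+=0$ and $g^n_{t-}\downarrow(S_{t-}-\bar Y_{t-})^+=0$, so Lemma~\ref{Dini} gives $\sup_{0\le t\le T}g^n_t\to0$, $\mathbb{P}\otimes\mathbb{P}^m$-a.s. Since $\sup_t(g^n_t)^2\le 2\sup_t|Y^1_t|^2+2\sup_t|S_t|^2\in L^1(\mathbb{P}\otimes\mathbb{P}^m)$ by Lemma~\ref{mainestimate} and the integrability assumed on the obstacle, dominated convergence yields
\[
\lim_{n\to\infty}\mathbb{E}\mathbb{E}^m\Bigl[\sup_{0\le t\le T}\bigl((Y^n_t-S_t)^-\bigr)^2\Bigr]=0.
\]

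The step I expect to be the main obstacle is exactly the passage from the $dt\,dx$-a.e.\ inequality $\bar u\ge v$ to the inequality $\bar Y_t\ge S_t$ valid for \emph{all} $t$ and all left limits --- the point where the jump structure genuinely enters. On the one hand the quasicontinuity of the limit $u$ is available only after one controls, in the limit $n\to\infty$, the jump component $U^n$ of the Lévy noise (its $L^2$-convergence and the uniform-on-trajectories behaviour of the associated martingales), which is precisely the ingredient with no counterpart in \cite{MS10}. On the other hand one must know that $\psi(\cdot,X_\cdot)$ is càdlàg with inaccessible jumps for quasicontinuous $\psi$ --- i.e.\ invoke the predictable-projection identities and the quasi-left-continuity of $X$ from Section~\ref{measure} --- so that the a.e.\ inequality really does propagate to every time despite the jumps of $X$. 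Everything else is the standard penalization bookkeeping already prepared in Lemmas~\ref{penalization:estimate1}--\ref{mainestimate}.
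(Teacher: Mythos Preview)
Your endgame (Lemma~\ref{Dini} plus dominated convergence) matches the paper, and the reduction to ``$Y_t\ge S_t$ for \emph{every} $t$ and $Y_{t-}\ge S_{t-}$'' is the right target. The gap is the step you flag yourself but do not actually close: the quasicontinuity of the limit $\bar u$. At this stage you only have weak $L^2([0,T];H^1)$-bounds on $u^n$; the decomposition ``linear part plus random regular potential'' you invoke for $u$ requires that $K^n$ converge to a \emph{continuous} increasing process $K$ (so that the limiting $\nu$ is regular in the sense of Theorem~\ref{potentielregulier} and Proposition~\ref{quasicontinuit?bis} applies). That convergence is Lemma~\ref{convergence:YZK}, which is proved \emph{using} Lemma~\ref{essentiel}: your argument is circular. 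You cannot get quasicontinuity of $\bar u$ for free from that of $\bar u^n$ either --- an increasing pointwise limit of quasicontinuous functions is in general only lower semicontinuous --- and without it the process $Y=\lim_n Y^n$ need not be c\`adl\`ag, so the step ``a c\`adl\`ag function vanishing $dt$-a.e.\ vanishes identically'' is unavailable.

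The paper avoids this circularity by an auxiliary construction. From Lemma~\ref{mainestimate} the coefficients $f(u^n,\nabla u^n)$, $h(u^n,\nabla u^n)$ are bounded in $L^2$, so Mazur's lemma yields convex combinations $\hat f^k,\hat h^k$ converging \emph{strongly}. One then solves the \emph{linear} penalized equations $d\hat u^k+[\mathcal A\hat u^k-n_k\hat u^k+n_k v+\hat f^k]\,dt+\hat h^k\cdot\overleftarrow{dB}=0$; for these the explicit trajectory representations together with Corollary~\ref{coefficientfn} and Lemma~\ref{coefficienthn} give $\widehat Y^k_\tau\to S_\tau$ at every stopping time $\tau$ (the $v$-, $f$- and $h$-contributions are handled separately, the jump martingales included via an integration-by-parts trick). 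Comparison yields $\hat u^k\le\lim_n u^n$, hence $Y_\tau\ge S_\tau$ for all stopping times; the section theorem and predictable projections then give $Y_t\ge S_t$ and $Y_{t-}\ge S_{t-}$ for all $t$, after which Lemma~\ref{Dini} applies exactly as you wrote. In short, the paper never touches quasicontinuity of the limit --- it manufactures the needed pointwise control by linearizing with strongly convergent coefficients --- and that is the idea your proposal is missing.
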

\begin{proof}
Let $(u^n)_{ n\in \mathbb{N}}$  be the sequence of solutions of the penalized SPDE defined in \eqref{SPDE:n}. 
From Lemma \ref{mainestimate}, it follows that the sequence $( f (u^n, \nabla u^n), h (u^n, \nabla u^n))_{n\in \mathbb{N}}$ is bounded in $L^2 ( [0,T] \times \Omega \times \mathbb{R}^d ; \mathbb{R}^{1 +d_1} )$.  By double extraction argument, we may choose a subsequence which is weakly convergent to  a system of predictable processes $(\bar{f}, \bar{h})$. Moreover, we can construct sequences $(\hat{f}^k)_k$ and $(\hat{h}^k)_k$ of convex combinations of elements of the form $$ \hat{f}^k = \sum_{i \in I_k} \alpha_i^k f (u^i,\nabla u^i ), \quad
 \hat{h}^k = \sum_{i \in I_k} \alpha_i^k h (u^i,\nabla u^i ) $$
converging strongly to $\bar{f}$ and $\bar{h}$ respectively in $L^2([0,T]\times\Omega\times\mathbb{R}^d;\mathbb{R})$ and $L^2([0,T]\times\Omega\times\mathbb{R}^d;\mathbb{R}^{d_1})$. \\
For $i \geq n$, we denote by $u^{i,n}$ the solution of the equation
\begin{equation}
\label{SPDE:i}
du_t^{i,n} +   [\mathcal A  u_t^{i,n} - n u_t^{i,n} + n v_t + f_t (u^i,\nabla u^i ) ]dt  +
h_t (u^i,\nabla u^i ) \cdot \overleftarrow{dB}_t = 0
\end{equation}
with final condition $u_T ^{i,n} = v_T$. By comparison theorem, we know that $u^{i,n} \leq u^i$. We set
$ \hat{u}^k =\sum_{i \in I_k} \alpha_i^k  u^{i,n_k},$ where $ n_k= \inf I_k$ and deduce that
\begin{equation}
\label{monotone}
\hat{u}^k \leq \sum_{i \in I_k} \alpha_i^k  u^{i} \leq \lim_{n \to \infty}u^n,
\end{equation}
where the last inequality comes from the monotonicity of the sequence $u^n$.
Moreover, it is clear that $ \hat{u}^k$ is a solution of the equation
\begin{equation}
\label{SPDE:k}
d\hat{u}_t^k + [\mathcal A  \hat{u}_t^k - n_k \hat{u}_t^k + n_k v_t + \hat{f}_t^k    \, ] \, dt +
\hat{h}_t^k \cdot \overleftarrow{dB}_t = 0
\end{equation}
with final condition $\hat{u}_T^k = v_T$.\\
Now we are going to take the advantage of the fact that the equations satisfied by  the sequence of solutions $\hat{u}^k$ have strongly convergent coefficients. Let us denote by $\widehat{Y}^k$ the c\`adl\`ag version on $[0,T]$ of the process
$(\hat{u}^k (X_t))_{t \in[0,T]}$, for any $ k \in \mathbb{N}$.
We will prove now that there exists a subsequence such that, for any stopping time $\tau \le T$
\begin{equation}
\label{Ychapeau}
\displaystyle \lim_{ k \to \infty} \widehat{Y}^k_\tau - S_\tau =0, \;\;  \mathbb{P}\otimes \mathbb{P}^m\textbf{-}a.s.
\end{equation}
Since equation \eqref{SPDE:k} is linear, the solution  decomposes as a sum of three terms each corresponding to one of the coefficients $\hat{f}^k,\hat{h}^k,v$. So it is enough to treat separately each term.\\[0.2cm]
a) In   the case where $f \equiv 0$ and $h \equiv 0$, one obtains the term corresponding to $\nu$. The equation becomes:
\begin{equation*}
\big(\partial_t  +  \mathcal A  \big) \, {\hat u}^{k} - k {\hat u}^{k} + k v   = 0.
\end{equation*}  
It is well known that $\widehat{Y}^k$ has the following representation: 
\begin{equation*}
\widehat{Y}_\tau^k = \mathbb{E}^m\Big[e^{-k(T-\tau)} S_T  + k \int_\tau^T e^{-k (r -\tau)} S_rdr \, \Big| \mathcal{G}_\tau\Big].
\end{equation*}
Then, it is easy to get that $ \lim_{ k \to \infty} \widehat{Y}^k_\tau - S_\tau =0$.\\[0.2cm]
b) In the case where $v \equiv 0$, $h \equiv 0$, the representation of $\widehat{Y}^k$ is given by
\begin{equation}
\label{Ito:Ykf}
\begin{split}
\widehat{Y}_t^k =&\int_t^T e^{-n_k (s-t)} \hat{f}^k_s (X_s) ds -  \int_{t}^{T} e^{-n_k (s-t)}
\nabla\hat{u}_s^k \left(X_{s-}\right)
dW_{s}\\&-\int_t^T\int_{\mathbb R^d} e^{-n_k (s-t)}\big(\hat u^k_s(X_{s-}+z)-\hat u^k_s(X_{s-})\big)\tilde N(dz,ds).
\end{split}
\end{equation}
Since
\begin{equation*}
\Big| \int_t^T e^{-n_k (s-t)} \hat{f}_s^k (X_s) ds \Big| \leq  \frac{1}{\sqrt{2n_k}} \Big(\int_t^T
(\hat{f}_s^k (X_s))^2 ds \Big)^{1/2},
\end{equation*}
then $\displaystyle \lim_{k\to \infty} \sup_{0\leq t \leq T} \Big|\int_t^T e^{-n_k (s-t)} \hat{f}^k_s (X_s) ds\Big| =0,\; \mathbb{P}\otimes \mathbb{P}^m\textbf{-}a.s.$.
 For the second and the third term in the expression of
$\hat{Y}^k$ we make an integration by parts formula and get 
\begin{equation*}
\begin{split}
&\int_{t}^{T} e^{-n_k (s-t)}
\nabla\hat{u}_s^k \left(X_{s-}\right)
dW_{s}+\int_t^T\int_{\mathbb R^d} e^{-n_k (s-t)}\big(\hat u^k_s(X_{s-}+z)-\hat u^k_s(X_{s-})\big)\tilde N(dz,ds)\\
=&\,e^{-n_k (T-t)}U_T^{k} - U_t^{k} + n_k \int_t^T U_s^{k} e^{-n_k(s-t)} ds
\end{split}
\end{equation*}
where $ \displaystyle U_t^{k} =\int_{0}^{t}
\nabla\hat{u}_s^k \left(X_{s-}\right)
dW_{s}+\int_0^t\int_{\mathbb R^d} (\hat u^k_s(X_{s-}+z)-\hat u^k_s(X_{s-})\tilde N(dz,ds)$. According to Corollary  \ref{coefficientfn},   we know that the martingales $U^{k},\, k \in \mathbb{N}$ converges to zero in $L^2,$ and hence on a subsequence we have $ \lim_{k\to \infty} \sup_{0 \leq t \leq T} |U_t^{k} | = 0, \;  \mathbb{P}\otimes \mathbb{P}^m-$a.s..
Therefore, the desired result \eqref{Ychapeau} holds  also in this case.\\[0.2cm]
c) In the case where $f \equiv 0$,  $v \equiv 0$, the representation of $\widehat{Y}^k$ is given by
\begin{equation*}
\label{Ito:Ykg}
\begin{split}
\widehat{Y}_t^k & =  \int_t^T e^{-n_k (s-t)} \hat{{h}}^k_s(X_s) \cdot \overleftarrow{dB}_s-\int_{t}^{T} e^{-n_k (s-t)}
\nabla\hat{u}_s^k \left(X_{s-}\right)
dW_{s}\\
&\quad\quad-\int_t^T\int_{\mathbb R^d} e^{-n_k (s-t)}\big(\hat u^k_s(X_{s-}+z)-\hat u^k_s(X_{s-})\big)\tilde N(dz,ds)\,.
\end{split}
\end{equation*}
On account of Lemma \ref{coefficienthn}, the same arguments used in the previous cases work again.
\vspace{2mm}

Since $\widehat {Y}^n$ is increasing and bounded, it converges to a limit process $Y$. Due to (\ref{monotone}) and (\ref{Ychapeau}), we see that $Y_\tau \ge S_\tau$ for any stopping time $\tau$. From this and the section theorem (\cite{DM01}, pp.220), we deduce that $\forall t,\ Y_t \ge S_t,\ \mathbb{P}\otimes \mathbb{P}^m-$a.s.. Hence, $ \forall t, \ (Y^n_t-S_t)^- \searrow 0,\ \mathbb{P}\otimes\mathbb{P}^m-$a.s.. Since $Y^n \nearrow Y$, if we denote by $^pX$ the predictable projection of $X$, then $^pY^n \nearrow {^pY}$ and ${^pY} \ge {^pS}$. But the jumping times of $Y^n$ are all inaccessible. Henceforth $^pY^n=Y^n_{-}$, where $Y^n_{-}$ is the left limit process of $Y^n$. In the same way, we have $^pS=S_{-}$. So we have proved that $^pY^n \nearrow {^pY} \ge {^pS}$, which implies that $(Y^n_{t-}-S_{t-})^- \searrow 0, \mathbb{P}\otimes \mathbb{P}^m-$a.s. for any $t$. Consequencely from  Lemma \ref{Dini}, we deduce that $\sup_{0 \le t \le T}(Y^n_t-S_t)^{-} \longrightarrow 0, $ a.s.. Finally, we finish the proof of the lemma, due to dominated convergence theorem. 
\end{proof}

Then, we have the following result.
\begin{lemma}
	\label{convergence:YZK}
	There exists a predictable measurable  set of processes
	$ \left(Y_t, Z_t,U_t, K_t \right)_{t \in [0,T]}$ such that
	\begin{equation}
	\label{convergence:YZKn}
	\begin{split}
\mathbb{E}\mathbb{E}^m \Big[ \sup_{0 \leq s \leq T}  |Y_t^n - Y_t|^2&+ \int_0^T  |Z_t^n - Z_t|^2  dt  +\sup_{ 0 \leq t \leq T} |K_t^n - K_t|^2\\ &+ \int_0^T \int_{\mathbb{R}^d} |U^n_t(z)-U_t(z)|^2v(dz)dt\Big] \longrightarrow 0 \quad  \mbox{as} \; \,     n \to \infty.
	 \end{split}
	  \end{equation}
	Moreover, we have that $(Y_t, Z_t,U_t, K_t)_{t \in [0,T]}$  satisfies  $Y_t\geq S_t,\; \forall \, t \in [0,T] $ and   $ \displaystyle \int_{0}^{T}(Y_s-S_s)dK_s =  0$, $ \mathbb{P}\otimes \mathbb{P}^m-$a.e..
\end{lemma}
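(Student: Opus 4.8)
The plan is to combine the uniform estimates of Lemma \ref{mainestimate} with the monotonicity of $(Y^n)$ and the key convergence of Lemma \ref{essentiel} to obtain strong convergence of the whole quadruple. First I would establish the convergence of $Y^n$: by the comparison theorem (Theorem \ref{comparaison}) the sequence $u^n$, and hence $Y^n = u^n(X_\cdot)$, is nondecreasing in $n$, and by Lemma \ref{mainestimate} it is bounded in $L^2(\Omega\times\Omega';C([0,T]))$; so $Y^n_t \nearrow Y_t$ pointwise, and by Lemma \ref{essentiel} together with dominated convergence (the increments are dominated by $\sup_t|Y^n_t|+\sup_t|S_t|$, which is square-integrable uniformly in $n$) one upgrades this to $\mathbb{E}\mathbb{E}^m[\sup_{0\le t\le T}|Y^n_t-Y_t|^2]\to 0$. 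In particular $Y_t\ge S_t$ for all $t$, as already noted at the end of the proof of Lemma \ref{essentiel}.

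Next I would prove that $(Z^n,U^n,K^n)$ is Cauchy. Write the BSDE \eqref{BSDE:n} for indices $n$ and $p$ and apply It\^o's formula to $|Y^n_t-Y^p_t|^2$; the martingale terms have zero expectation, the jump term contributes $\int_t^T\int_{\mathbb R^d}|U^n_s(z)-U^p_s(z)|^2 v(dz)\,ds$ with the right sign, and the driver differences are controlled using Assumption \textbf{(H)} exactly as in Lemma \ref{penalization:estimate1}, absorbing the $\beta^2|Z^n-Z^p|^2$ term on the left because $\beta^2<1$. The only term without an a priori sign is $2\int_t^T (Y^n_s-Y^p_s)\,d(K^n_s-K^p_s)$; here one uses the standard reflected-BSDE trick: $\int_0^T(Y^n_s-S_s)\,dK^p_s\ge 0$ and $\int_0^T(Y^p_s-S_s)\,dK^n_s\ge 0$ since $dK^i$ is carried by $\{Y^i=S\}$ and $Y^j\ge S$, so
\[
\int_0^T (Y^n_s-Y^p_s)\,d(K^n_s-K^p_s)\le \int_0^T (Y^n_s-S_s)\,dK^n_s+\int_0^T(Y^p_s-S_s)\,dK^p_s,
\]
and each of these is bounded by $\sup_s(Y^n_s-S_s)^-\,K^n_T$ (resp.\ with $p$), hence by Cauchy--Schwarz, Lemma \ref{mainestimate} and Lemma \ref{essentiel} tends to $0$. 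Gronwall then gives $\mathbb{E}\mathbb{E}^m\int_0^T(|Z^n_s-Z^p_s|^2+\int_{\mathbb R^d}|U^n_s(z)-U^p_s(z)|^2v(dz))\,ds\to 0$, so $Z^n\to Z$ in $L^2([0,T]\times\Omega\times\Omega')$ and $U^n\to U$ in the corresponding space. Feeding these, the convergence of $Y^n$, and the Lipschitz bound on $f,h$ back into \eqref{BSDE:n}, the term $K^n_t=Y^n_t-\Phi(X_T)-\int_t^T(\cdots)-$(martingales) converges uniformly in $t$ in $L^2$ to a limit $K$; a Burkholder--Davis--Gundy estimate on the martingale differences yields $\mathbb{E}\mathbb{E}^m[\sup_t|K^n_t-K^p_t|^2]\to 0$, which is \eqref{convergence:YZKn}. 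The limit $K$ is continuous, nondecreasing, $K_0=0$, since each $K^n$ is.

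Finally I would verify the Skorohod condition $\int_0^T(Y_s-S_s)\,dK_s=0$. From $0\le \int_0^T(Y_s-S_s)\,dK_s$ and $\int_0^T(Y^n_s-S_s)\,dK^n_s=0$ (which holds because $dK^n$ lives on $\{Y^n=S\}$, by definition $K^n_t=n\int_0^t(Y^n_s-S_s)^-ds$ forces $(Y^n_s-S_s)dK^n_s=-n((Y^n_s-S_s)^-)^2ds\le 0$, while also $(Y^n_s-S_s)dK^n_s\ge -(Y^n_s-S_s)^- dK^n_s$; more cleanly, $\int_0^T(Y^n_s-S_s)dK^n_s\ge -\sup_s(Y^n_s-S_s)^-\,K^n_T\to 0$ and $\le 0$), one passes to the limit: $\int_0^T(Y^n_s-S_s)\,dK^n_s\to\int_0^T(Y_s-S_s)\,dK_s$ by the uniform convergence of $Y^n$ and the convergence $dK^n\to dK$ in the sense of the total-variation bound from \eqref{convergence:YZKn}, giving $\int_0^T(Y_s-S_s)\,dK_s\le 0$, hence $=0$. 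The main obstacle is the control of the cross term $\int(Y^n-Y^p)d(K^n-K^p)$ in the Cauchy argument — it is precisely here that Lemma \ref{essentiel} is indispensable, and care is needed because $K^n$ is only of bounded variation (not a martingale), so one cannot use an isometry and must instead bound it by $\sup_t(Y^n_t-S_t)^-$ times the uniformly bounded total mass $K^n_T$.
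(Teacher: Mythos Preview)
Your overall strategy matches the paper's, but there is a genuine error in how you handle the cross term $\int_0^T(Y^n_s-Y^p_s)\,d(K^n_s-K^p_s)$ in the Cauchy argument. You invoke the ``standard reflected-BSDE trick'' by asserting that $\int_0^T(Y^n_s-S_s)\,dK^p_s\ge 0$ and $\int_0^T(Y^p_s-S_s)\,dK^n_s\ge 0$, justifying this via ``$dK^i$ is carried by $\{Y^i=S\}$ and $Y^j\ge S$''. Neither property holds for the \emph{penalized} sequence: here $dK^n_s=n(Y^n_s-S_s)^-\,ds$ is supported on $\{Y^n<S\}$, not on $\{Y^n=S\}$, and one does \emph{not} have $Y^n\ge S$ --- indeed, the whole point of Lemma~\ref{essentiel} is to show that $(Y^n-S)^-\to 0$. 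Your displayed inequality is therefore unproven, and can fail whenever both $Y^n_s<S_s$ and $Y^p_s<S_s$.

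The correct decomposition, which is what the paper uses, runs as follows. Write
\[
\int_0^T(Y^n_s-Y^p_s)\,d(K^n_s-K^p_s)
=\int_0^T(Y^n_s-S_s)\,dK^n_s+\int_0^T(Y^p_s-S_s)\,dK^p_s
-\int_0^T(Y^p_s-S_s)\,dK^n_s-\int_0^T(Y^n_s-S_s)\,dK^p_s.
\]
The first two integrals are $\le 0$ since $\int(Y^i-S)\,dK^i=-i\int((Y^i-S)^-)^2\,ds$. For the last two, use $-a\le a^-$ to get $-\int(Y^p-S)\,dK^n\le\int(Y^p-S)^-\,dK^n$ and similarly for the other. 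Hence
\[
\int_0^T(Y^n_s-Y^p_s)\,d(K^n_s-K^p_s)\le \int_0^T(Y^p_s-S_s)^-\,dK^n_s+\int_0^T(Y^n_s-S_s)^-\,dK^p_s,
\]
and \emph{these} are the terms that vanish by Cauchy--Schwarz, the uniform bound on $\mathbb{E}\mathbb{E}^m(K^n_T)^2$ from Lemma~\ref{mainestimate}, and Lemma~\ref{essentiel}. This is precisely the bound appearing in the paper's display \eqref{cauchy:Z}. With this correction your argument goes through. A minor additional point: the uniform-in-$t$ $L^2$ convergence of $Y^n$ that you claim at the outset is not immediate from monotonicity plus Lemma~\ref{essentiel}; the paper first obtains only $\mathbb{E}\mathbb{E}^m\int_0^T|Y^n_t-Y_t|^2\,dt\to 0$ by dominated convergence, and upgrades to the supremum via BDG \emph{after} the Cauchy estimate on $(Z^n,U^n)$, following Hamad\`ene--Ouknine.
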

 \begin{proof} From the monotonicity of the sequence $ (f_n)_{n \in \mathbb{N}}$ and comparison theorem (see Theorem \ref{comparaison}), we get  that $ u^n (t,x) \leq u^{n+1} (t,x),\,dtdx\otimes \mathbb{P}$-a.e., therefore one has  $ Y_t^n \leq Y_t^{n+1}$, for all $ t \in [0,T]$, $\mathbb{P}\otimes \mathbb{P}^m$ -a.s. Thus, there exists a predictable real valued process
$ Y = \left(Y_t \right)_{t \in [0,T]}$ such that $ Y_t^n \uparrow Y_t,$ for all $t \in [0,T]$ a.s. and by Lemma \ref{mainestimate} and Fatou's lemma , one gets  $$ \mathbb{E}\mathbb{E}^m \Big( \sup_{0 \leq s \leq T}  |Y_t|^2 \Big) \leq  c.$$
Moreover, from the dominated convergence theorem one has
\begin{equation}
\label{convergence:Y}
\mathbb{E}\mathbb{E}^m  \int_0^T  |Y_t^n - Y_t|^2  dt  \longrightarrow 0 \quad  \mbox{as} \; \,     n \to \infty.\end{equation}
Moreover, for $n \geq p$,
\begin{equation}
\label{Ito:BSDEn}
\begin{split}
& |Y_t^n -Y_t^p|^2+\int_t^T|Z_s^n-Z_s^p|^2 ds+\int_t^T\int_{\mathbb R^d} |U^n_s(z)-U^p_s(z)|^2v(dz)ds\\ 
 = &\,2 \int_t^T(Y_s^n - Y_s^p)[f_s ( X_s, Y_s^n, Z_s^n ) - f_s ( X_s, Y_s^p, Z_s^p )] ds  +  2 \int_t^T(Y_s^n - Y_s^p) d(K_s^n - K_s^p) \\
&+ \int_t^T |h_s(X_s,Y_s^n, Z_s^n )- h_s(X_s,Y_s^p, Z_s^p)|^2  ds-2 \sum\limits_{i}\int_{t}^{T}(Y_s^n - Y_s^p)(Z_{i,s}^n - Z_{i,s}^p)dW_{s}^{i}\\
&+ 2 \int_{t}^{T}(Y_s^n - Y_s^p)[ h_s(X_s, Y_s^n, Z_s^n) - h_s(X_s, Y_s^p, Z_s^p)] \cdot \overleftarrow{dB}_s \\
& +\int_t^T\int_{\mathbb R^d}|Y^n_{s-}+U^n_{s}-Y^p_{s-}-U^p_s|^2-|Y^n_{s-}-Y^p_{s-}|^2\tilde N(dz,ds) .\\
\end{split}
\end{equation}
By standard calculation one deduces that
\begin{equation}
\label{cauchy:Z}
\begin{split}
&\mathbb{E}\mathbb{E}^m  \int_t^T |Z_s^n-Z_s^p|^2 ds+ \mathbb{E}\mathbb{E}^m  \int_t^T\int_{\mathbb R^d} |U^n_s(z)-U^p_s(z)|^2v(dz)ds\\
 \leq& \, c\, \mathbb{E}\mathbb{E}^m  \int_t^T  |Y_s^n - Y_s^p|^2\,
+  4 \mathbb{E} \mathbb{E}^m  \int_t^T \left(Y_s^n -  S_s\right)^- dK_s^p  + 4 \mathbb{E}\mathbb{E}^m  \int_t^T \left(Y_s^p -  S_s\right)^-  \, dK_s^n\\
\end{split}
\end{equation}
Therefore , by  Lemma \ref{essentiel}, \eqref{convergence:Y} and \eqref{cauchy:Z},  it follows that
\begin{equation}
\label{cauchy:YZ}
\begin{split}
&\mathbb{E}\mathbb{E}^m  \int_0^T  |Y_t^n - Y_t^p|^2  dt  + \mathbb{E} \mathbb{E}^m  \int_0^T  |Z_t^n - Z_t^p|^2  dt + \mathbb{E} \mathbb{E}^m  \int_0^T\int_{\mathbb R^d} |U^n_t(z)-U^p_t(z)|^2v(dz)dt \\
& \longrightarrow 0 \quad  \mbox{as} \; \,     n,p \to \infty.
 \end{split}
 \end{equation}
Then we can do a same argument as in Hamad\`ene and Ouknine \cite{HO} and obtain \eqref{convergence:YZK}. 
%We get that there exists a pair $(Z,K,U)$ of progressively measurable processes with values in $ \mathbb{R}^d \times\mathbb{ R}$ such that \begin{equation*}
%%\label{convergence:YZKn}
%\begin{split}
%&\mathbb{E} \, \mathbb{E}^m  \left[ \sup_{0 \leq s \leq T}  |Y_t^n - Y_t|^2+ \int_0^T  |Z_t^n - Z_t|^2  dt  +   \sup_{ 0 \leq t \leq T} |K_t^n - K_t|^2+\int_0^T\int_{\mathbb R^d} |U^n_t(z)-U_t(z)|^2v(dz)dt \right]\\
%&\hspace{-2mm} \longrightarrow 0 \quad \quad  \mbox{as} \; \,     n \to \infty\end{split}\end{equation*}
It is obvious  that $(K_t)_{t \in[0,T]}$ is an increasing continuous process.

Moreover, Lemma \ref{essentiel} implies that 
%On the other hand,  since by Lemma \ref{essentiel}, we have $\lim_{n\rightarrow
%	\infty}\displaystyle \mathbb{E} \mathbb{E}^m \big[\sup_{0 \leq t \leq
%	T}((Y^n_t-S_t)^-)^2\big]=0$;  then,
\begin{equation}
\label{condition:obstacle}
Y_t\geq S_t, \quad \forall \, t \in [0,T], \quad \mathbb{P}\otimes \mathbb{P}^m \mathrm{-} a.s.,
\end{equation}
which yields that $\int_{0}^{T}(Y_s-S_s)dK_s \geq 0$.  

 Finally we also have
$\int_{0}^{T}(Y_s-S_s)dK_s = 0$ since on the other hand the
sequences $(Y^n)_{n\geq 0}$ and $(K^n)_{n\geq 0}$ converge
uniformly (at least for a subsequence) respectively to $Y$ and $K$
and
$$ \int_{0}^{T}( Y_{s}^{n}-S_{s})
dK_{s}^{n}=-n\int_0^{T}(( Y_{s}^{n}-S_{s}) ^{-}) ^{2}ds\leq 0.\,\,
$$
\end{proof}

\textbf{Proof of Theorem \ref{maintheorem}: } 
Since
\begin{equation*}
\begin{split}
\int_0^T\| u_t^n - u_t^p\|_2^2 +\mathcal E(u^n_t-u^p_t)dt
=\mathbb{E}^m \int_0^T[| Y_t^n - Y_t^p|^2 +| Z_t^n - Z_t^p|^2+\int_{\mathbb R^d} |U^n_t(z)-U^p_t(z)|^2v(dz)]dt,
\end{split}
\end{equation*}
by the preceding lemma, one deduces that the sequence $(u^n)_{n\in \mathbb{N}}$ is a Cauchy sequence in $L^2(\Omega \times [0,T]; H^1 (\mathbb{R}^d))$ 
and hence has a limit $u$ in this space. Also from the preceding lemma it follows that $dK_t^n$ weakly converges to  
$dK_t$, $\mathbb{P}\otimes \mathbb{P}^m-a.e.$. This implies that
\begin{equation*}
\lim_{n} \int_0^T \int_{\mathbb{R}^d} n( u^n_t - v_t)^- \varphi (t,x) \, dt dx =  \lim_{n} \mathbb{E}^m \int_0^T  \varphi_t(X_t)dK_t^n = \int_0^T \int_{\mathbb{R}^d} \varphi (t,x)\nu(dt,dx),
\end{equation*}
where $\nu$ is the regular measure defined by
\begin{equation*}
\int_0^T \int_{\mathbb{R}^d} \varphi (t,x) \, \nu\big(dt dx\big) = E^m \int_0^T \varphi_t(X_t) \,dK_t .
\end{equation*}
Writing the equation \eqref{SPDE:n}  in the weak form and passing to the limit one obtains the equation \eqref{weak:RSPDE} with $u$ and this  $\nu$. The arguments we have explained after Definition \ref{o-spde} ensure that $u$ admits a quasicontinuous version $\bar{u}$. Then one deduces that $( \bar{u}_t(X_t))_{t \in [0,T]}$ should coincide with $(Y_t)_{t \in [0,T]}$,  $\mathbb{P}\otimes \mathbb{P}^m-$a.e.. Therefore, the inequality  $ Y_t \geq S_t$ implies $ u \geq v $, $dt \otimes \mathbb{P} \otimes dx-$a.e. and the relation $ \int_0^T(Y_t - S_t)dK_t = 0$ implies the relation $(iv)$ of Definition \ref{o-spde}. $\hfill \Box$

\end{document}